\theoremstyle{plain} 
\newtheorem{theorem}{Theorem}
\newtheorem*{theorem*}{Theorem}
\newtheorem{proposition}[theorem]{Proposition}
\newtheorem{corollary}[theorem]{Corollary}
\newtheorem{lemma}[theorem]{Lemma}
\newtheorem*{conclusion}{Conclusion}
\theoremstyle{definition}
\newtheorem{remark}[theorem]{Remark}
\newtheorem{example}[theorem]{Example}
\newtheorem{definition}[theorem]{Definition}
\def\mbb #1{\mathbb{#1}}
\def\mbf #1{\mathbf{#1}}
\def\mrm #1{\mathrm{#1}}
\def\Cal #1{\mathcal{#1}}
\def\mfr #1{\mathfrak{#1}}
\def\mscr #1{\mathscr{#1}}
\def\C{\mbb{C}}
\def\CP{\mbb{CP}}
\def\R{\mbb{R}}
\def\Z{\mbb{Z}}
\def\N{\mbb{N}}
\def\K{\mbb{K}}
\def\SL{\mrm{SL}}
\def\tr{\mrm{tr}}
\def\id{\mrm{id}}
\newcommand{\sminus}{\smallsetminus}
\def\sX{\mathsf{X}}
\def\sE{\mathsf{E}}
\def\sY{\mathsf{Y}}
\def\sU{\mathsf{U}}
\def\sXE{\mathsf{XE}}
\newcommand*\bigcdot{\mathpalette\bigcdot@{.5}}
\newcommand*\bigcdot@[2]{\mathbin{\vcenter{\hbox{\scalebox{#2}{$\m@th#1\bullet$}}}}}
\newcommand{\uppie}{\scalebox{0.7}{\!$\mathrlap{\rotatebox[origin=c]{-45}{$\LEFTCIRCLE$}}\rotatebox[origin=c]{-135}{$\LEFTCIRCLE$}\!$}}
\newcommand{\downpie}{\scalebox{0.7}{\!$\mathrlap{\rotatebox[origin=c]{45}{$\LEFTCIRCLE$}}\rotatebox[origin=c]{135}{$\LEFTCIRCLE$}\!$}}
\newcommand*{\transp}[2][-3mu]{\ensuremath{\mskip1mu\prescript{\smash{\mathrm t\mkern#1}}{}{\mathstrut#2}}}%
\newcommand{\barbm}[1]{\overline{#1}}
\author{Martin Klime\v{s}}
\title{Stokes phenomenon and confluence in non-autonomous Hamiltonian systems}
\begin{document}
\maketitle

\begin{abstract}
\noindent
This article studies a confluence of a pair of regular singular points to an irregular one in a generic family of time-dependent Hamiltonian systems in dimension 2. 
This is a general setting for the understanding of the degeneration of the sixth Painlevé equation to the fifth one.
The main result is a theorem of sectoral normalization of the family to an integrable formal normal form, 
through which is explained the relation between the local monodromy operators at the two regular singularities and the non-linear Stokes phenomenon at the irregular singularity of the limit system. 
The problem of analytic classification is also addressed.

\def\keywords#1{\small{\textbf{Key words:} #1}}\def\and{\ifhmode\unskip\nobreak\fi\ $\cdot$\ }
\smallskip\noindent
\keywords{Non-autonomous Hamiltonian systems \and irregular singularity \and non-linear Stokes phenomenon \and wild monodromy \and confluence \and local analytic classification \and Painlevé equations.}
\end{abstract}

\section{Introduction}
We consider a parametric family of non-autonomous Hamiltonian systems of the form
\begin{equation}\label{eq:HC-system}
\begin{aligned}
x(x-\epsilon)\tfrac{dy_1}{dx}&=\ \tfrac{\partial H}{\partial y_2}(y,x,\epsilon)\\
x(x-\epsilon)\tfrac{dy_2}{dx}&=-\tfrac{\partial H}{\partial y_1}(y,x,\epsilon),
\end{aligned}
\qquad (y,x,\epsilon)\in(\C^2\!\times\!\C\!\times\!\C,0),
\end{equation}
shortly written as
\begin{equation}\label{eq:HC-systemshort}
x(x-\epsilon)\tfrac{dy}{dx}=J\transp{(D_yH)},\qquad J=\left(\begin{smallmatrix} 0 & 1 \\[3pt] -1 & 0 \end{smallmatrix}\right),
\end{equation}
with a singular Hamiltonian function $\frac{H(y,x,\epsilon)}{x(x-\epsilon)}$,
where $H(y,x,\epsilon)$ is an analytic germ such that $H(y,0,0)$ has a non-degenerate critical point (Morse point) at $y=0$:
\begin{equation*}
 D_yH(0,0,0)=0,\qquad \det D_y^2 H(0,0,0)\neq 0.
\end{equation*}
The last condition means that the $y$-linear terms of the right side of \eqref{eq:HC-systemshort} are of the form 
$$A(x,\epsilon)y\ \text{for}\ A=J D_y^2H,\ \text{where}\ 
A(0,0)\sim \left(\begin{smallmatrix} \lambda^{(0)}(0)\!\!\! & 0 \\[3pt] 0 & \!\!\!-\lambda^{(0)}(0) \end{smallmatrix}\right)\ \text{for some}\ \lambda^{(0)}(0)\neq 0.$$

For $\epsilon\neq 0$ the system \eqref{eq:HC-system} has two regular singular points at $x=0$ and $x=\epsilon$. 
At each one of them, the local information about the system is carried by a formal invariant and a monodromy (holonomy) operator. 
On the other hand, for $\epsilon=0$ the corresponding information about the irregular singularity at $x=0$ is carried by
a formal invariant and by a pair of non-linear operators. 
Our main goal is to explain the relation between these two distinct phenomena, and to show how the Stokes operators are related to the monodromy operators. The principal thesis is, that while the monodromy operators diverge when $\epsilon\to 0$, they
each accumulate to a 1-parameter family of ``wild monodromy operators'' which encode the Stokes phenomenon  (Theorem~\ref{theorem:HC-accumulation}). It is expected that this ``wild monodromy'' should have Galoisian interpretation.

Along the way, we provide a formal normal form and a sectoral normalization theorem for the family (Theorem~\ref{theorem:HC-normalization}), an analytic classification (Theorem~\ref{theorem:HC-classification}), and a decomposition of the monodromy operators (Theorem~\ref{theorem:HC-decomposition}).

In Section~\ref{section:HC-linear}, we illustrate all this on the example of traceless $2\times 2$ linear differential systems
\begin{equation}\label{eq:HC-wm-linear}
 x(x-\epsilon)\tfrac{dy}{dx}=A(x,\epsilon)y,\qquad A(x,\epsilon)\in \mfr{sl}_2(\C),\quad\det A(0,0)\neq 0,
\end{equation}
for which our description follows from the more general work of Lambert and Rousseau \cite{LR, HLR}. 
Here the relation between the monodromy and the Stokes phenomenon can be summarized as:
\begin{theorem*}
When $\epsilon\to 0$ the elements of the monodromy group of the system \eqref{eq:HC-wm-linear} accumulate to generators of the wild monodromy group of the limit system (that is the group generated by the Stokes operators and the exponential torus).
\end{theorem*}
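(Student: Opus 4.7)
The plan is to reduce the accumulation statement to an explicit comparison of sectorial normalizations along Glutsyuk parameter directions, essentially specializing the confluence framework of Lambert--Rousseau to the $\mfr{sl}_2(\C)$ setting. First I would invoke their formal normal form: a formal traceless gauge transformation brings \eqref{eq:HC-wm-linear} to the diagonal integrable model
$$x(x-\epsilon)\tfrac{dy}{dx}=\diag(\lambda(x,\epsilon),-\lambda(x,\epsilon))\,y,$$
with $\lambda$ an analytic invariant satisfying $\lambda(0,0)\ne 0$. The monodromies of this model at $x=0$ and $x=\epsilon$ are diagonal with eigenvalues $e^{\mp 2\pi i\,\lambda(0,\epsilon)/\epsilon}$ and $e^{\pm 2\pi i\,\lambda(\epsilon,\epsilon)/\epsilon}$, respectively, read off from the residues of $\tfrac{\lambda\,dx}{x(x-\epsilon)}$.

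Next I would realize this gauge transformation sectorially on each Glutsyuk parameter sector, i.e.\ on a spiraling domain in $\epsilon$ accumulating at $\epsilon=0$. On such a sector the $x$-disc is covered by two overlapping domains $\Omega_0(\epsilon)$ and $\Omega_\epsilon(\epsilon)$, each enclosing exactly one of the singular points $x=0$, $x=\epsilon$. On each $\Omega_j$ there is a holomorphic gauge $T_j(x,\epsilon)$ conjugating \eqref{eq:HC-wm-linear} to the model; as $\epsilon\to 0$ through the Glutsyuk sector, these $T_j$ converge uniformly on compacta to the sectorial normalizations of the limit irregular system, and the comparison $T_0T_\epsilon^{-1}$ converges to a non-trivial Stokes matrix $S$ of the limit.

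With these ingredients in place, the monodromy of \eqref{eq:HC-wm-linear} around $x=0$ equals $T_0^{-1}M_0(\epsilon)T_0$, where $M_0(\epsilon)$ is the diagonal model monodromy above. Since $\lambda(0,\epsilon)/\epsilon\to\infty$, the eigenvalues of $M_0(\epsilon)$ oscillate wildly, and along a suitable subsequence $\epsilon_n\to 0$ they can be driven to any prescribed element $\tau\in\mbb T:=\{\diag(t,t^{-1}):t\in\C^*\}$ of the exponential torus. Combining with the convergence of $T_0$ shows that each such $\tau$, transported by the limiting sectorial normalization, appears as an accumulation point of monodromy elements; the parallel argument at $x=\epsilon$, together with the cocycle identity $T_0T_\epsilon^{-1}=S$, produces additional limits of the form $S\tau'S^{-1}$ with $\tau'\in\mbb T$. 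These exhaust the generators of the group generated by $\mbb T$ and $S$, which is by definition the wild monodromy group.

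The main obstacle is decoupling the two limits: the sectorial normalizations $T_j(\cdot,\epsilon)$ must extend continuously up to $\epsilon=0$ along the Glutsyuk sector while the phase $\lambda(0,\epsilon)/\epsilon$ is steered independently through $\mbb T$ by a choice of subsequence. This is legitimate because the normalizations depend smoothly on $\epsilon$ across a Glutsyuk domain whereas the phase varies infinitely fast, so a standard diagonal extraction selects a subsequence witnessing each prescribed torus element without disturbing the convergence of the $T_j$. Everything else is a direct computation built on top of the Lambert--Rousseau sectorial normalization.
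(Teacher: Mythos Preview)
Your outline captures the right heuristic---drive the formal monodromy through the torus by choosing subsequences while the sectorial normalizations converge---but there is a genuine gap in the Stokes bookkeeping. An irregular singularity of Poincar\'e rank~$1$ in a $2\times 2$ system has \emph{two} Stokes matrices, one upper-triangular $S_2$ and one lower-triangular $S_1$, and the wild monodromy group is generated by the torus $\mbb T$ together with \emph{both} of them. Your construction produces a single connection matrix from the comparison $T_0T_\epsilon^{-1}$ on one overlap, and you then declare the wild monodromy group to be ``the group generated by $\mbb T$ and $S$''. But $\langle \mbb T,\ S_1\rangle$ is contained in the lower-triangular Borel subgroup and cannot contain a non-trivial $S_2$; so the accumulation set you describe (torus elements and their $S$-conjugates) does not in general generate the wild monodromy group. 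If your $\Omega_0,\Omega_\epsilon$ are meant to overlap in two components (an upper and a lower one), you must say so explicitly and track two connection matrices, each converging to one of the two Stokes matrices; and then the actual limits of the monodromies will look like $S_i\cdot\tau$ rather than $\tau$ or $S\tau S^{-1}$.

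The paper's route avoids this by using a \emph{single} normalizing gauge $\mbf\Psi_\pm$ defined on a ramified domain $\sX_\pm(\epsilon)$ that contains \emph{both} singular points. The two Stokes matrices $S_{1,\pm},S_{2,\pm}$ arise from the two self-intersection sectors of this ramified domain, and each monodromy factors exactly as $M_{i,\pm}^\bullet=S_{i,\pm}\cdot N_{x_{i,\pm}}$ (up to the conjugations recorded in \eqref{eq:HC-Mdecomposition}). One then takes the explicit sequences $\epsilon_n$ with $\tfrac{\lambda^{(0)}}{\epsilon_n}=\tfrac{\lambda^{(0)}}{\epsilon_0}+n$, along which $S_{i,\pm}(\epsilon_n)\to S_i(0)$ while the formal monodromies $N_{x_{i,\pm}}(\epsilon_n)$ converge to a torus element depending on the free parameter $\kappa=e^{2\pi i\lambda^{(0)}/\epsilon_0}$. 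Varying $\kappa\in\C^*$ then yields both $S_1(0)$ and $S_2(0)$ composed with arbitrary torus elements, and these visibly generate $\langle S_1(0),S_2(0),\mbb T\rangle$. Your two-domain picture could in principle be repaired to do the same, but as written it misses half the Stokes data.
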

The linear case can be kept in mind as a leading example of which the general non-linear case is a close analogy. 

An important example of a confluent family of systems \eqref{eq:HC-system}, which in fact motivated this study, is the degeneration of the sixth Painlevé equation to the fifth one, 
presented in Section~\ref{section:HC-PV}. A more detailed treatment of this confluence will be the subject of an upcoming article \cite{Kl4}.

\subsection*{Acknowledgments}
This paper was inspired by the works of C.~Rousseau and L.~Teyssier \cite{RT}, C.~Lambert and C.~Rousseau \cite{LR}, and A.~Bittmann \cite{Bit16i,Bit16ii,Bit16iii}.
It was written during my stay at Centre de Recherches Mathématiques at Université de Montréal. 
I want to thank Christiane Rousseau for her support and the CRM for its hospitality.

\goodbreak

\section{The foliation and its formal invariants}

The family of systems \eqref{eq:HC-system} defines a family of singular foliations in the $(y,x)$-space, leaves of which are the solutions.
We associate to \eqref{eq:HC-system} a family of vector fields tangent to the foliations
\begin{equation}\label{eq:HC-vectorfield}
Z_{H,\epsilon}(y,x)=x(x-\epsilon)\partial_x+X_{H,x,\epsilon}(y),
\end{equation}
where
\begin{equation}\label{eq:HC-hamiltonianvectorfield}
X_{H,x,\epsilon}(y)=\tfrac{\partial H}{\partial y_2}(y,x,\epsilon)\partial_{y_1}-\tfrac{\partial H}{\partial y_1}(y,x,\epsilon)\partial_{y_2}. 
\end{equation}
The vector field $Z_{H,0}(y,x)$ has a saddle-node type singularity at $(y,x)=0$, i.e. its linearization matrix has one zero eigenvalue, corresponding to the $x$-direction.
It follows from the Implicit Function Theorem that, for small $\epsilon\neq 0$, $Z_{H,\epsilon}$ has two singular points $(y_0(0,\epsilon),0)$ and $(y_0(\epsilon,\epsilon),\epsilon)$
bifurcating from $(y_0(0,0),0)=0$ and depending analytically on $\epsilon$.
The aim of this paper is a study of their confluence when $\epsilon\to 0$.

\smallskip

The two singularities of $Z_{H,\epsilon}$ have each a strong invariant manifold $\sY_0=\{(y,x):x=0\}$, resp. $\sY_\epsilon=\{(y,x):x=\epsilon\}$. 
Away of these invariant manifolds the vector field $Z_{H,\epsilon}$ is transverse to the 
\emph{fibration}
with fibers $\sY_c=\{(y,x):x=c\}$.
The $(y,x)$-space is endowed with a Poisson structure associated to the 2-form 
\begin{equation}
 \omega=dy_1\wedge dy_2,
\end{equation}
the restriction of which on each fiber $\sY_c$ is symplectic.
The vector field $Z_{H,\epsilon}$ is transversely Hamiltonian with respect to this fibration, the form $\omega$, and the Hamiltonian function $H(y,x,\epsilon)$.

\subsection{Fibered changes of coordinates}

We consider the problem of analytic classification of families of systems \eqref{eq:HC-system}, or orbital analytic classification of vector fields \eqref{eq:HC-vectorfield},
with respect to \emph{fiber-preserving} (shortly \emph{fibered}) \emph{changes of coordinates} 
$$(y,x,\epsilon)=(\Phi(u,x,\epsilon),x,\epsilon).$$
Such a change of coordinate transforms a system \eqref{eq:HC-systemshort}
to a system
\begin{align}
 x(x-\epsilon)\frac{du}{dx}&=(D_u\Phi)^{-1} J\transp{(D_yH)}\circ\Phi-x(x-\epsilon)(D_u\Phi)^{-1}\tfrac{\partial\Phi}{\partial x}\nonumber \\
&=(\det D_u\Phi)^{-1} J\transp{(D_u(H\circ\Phi))}-x(x-\epsilon)(D_u\Phi)^{-1}\tfrac{\partial\Phi}{\partial x},\label{eq:HC-changeofcoordinates}
\end{align}
using the identity 
\begin{equation}\label{eq:HC-PJP}
 PJ\transp{P}=\det P\cdot J\quad\text{for any $2\!\times\!2$ matrix $P$}.
\end{equation}

\begin{definition}
We call a fibered transformation $\Phi$ \emph{transversely symplectic} if $\det(D_u\Phi)\equiv 1$, i.e. if it preserves the restriction of $\omega$ to each fiber $\sY_x$.
\end{definition}

\begin{definition}\label{definition:HC-analequiv}
Two systems \eqref{eq:HC-system} with Hamiltonian functions $\frac{H(y,x,\epsilon)}{x(x-\epsilon)}$ and $\frac{\tilde H(u,x,\epsilon)}{x(x-\epsilon)}$ are called \emph{analytically  equivalent} if there exists an analytic germ of a transversely symplectic transformation $y=\Phi(u,x,\epsilon)$ 
that is analytic in $(u,x,\epsilon)$ and transforms one system to another:  $\Phi^*Z_{H,\epsilon}=Z_{\tilde H,\epsilon}$.
\end{definition}

\begin{lemma}\label{lemma:HC-transhamilton}
If a transformation $y=\Phi(u,x,\epsilon)$ is transversely symplectic, then the transformed system \eqref{eq:HC-changeofcoordinates} is transversely Hamiltonian w.r.t. 
$\omega=du_1\wedge du_2$.
\end{lemma}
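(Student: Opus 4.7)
Under the hypothesis $\det(D_u\Phi)\equiv 1$, the right-hand side of \eqref{eq:HC-changeofcoordinates} simplifies, since the factor $(\det D_u\Phi)^{-1}$ drops out, giving
\[
x(x-\epsilon)\tfrac{du}{dx}=J\transp{(D_u(H\circ\Phi))}-x(x-\epsilon)(D_u\Phi)^{-1}\tfrac{\partial\Phi}{\partial x}.
\]
The first term is already in the transversely Hamiltonian form $J\transp{(D_u\widetilde H)}$ with partial Hamiltonian $H\circ\Phi$. My plan is therefore to show that the compensating term is itself transversely Hamiltonian: equivalently, that the $u$-vector field $G(u;x,\epsilon):=(D_u\Phi)^{-1}\tfrac{\partial\Phi}{\partial x}$ on each fiber $\sY_x$ admits a germ of Hamiltonian $K(u,x,\epsilon)$ with $G=J\transp{(D_uK)}$.

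In complex dimension $2$, a vector field is Hamiltonian with respect to $\omega=du_1\wedge du_2$ exactly when its $u$-divergence vanishes, in which case a local Hamiltonian is produced by integration via the Poincar\'e lemma. I would expand $G=(G_1,G_2)$ by Cramer's rule (permitted because $\det D_u\Phi=1$), obtaining each $G_i$ as a $2\times 2$ determinant in first derivatives of $\Phi$, and then compute $\partial_{u_1}G_1+\partial_{u_2}G_2$ term by term. After the cancellation of mixed second $u$-derivatives, which is the heart of the matter, the remaining terms assemble into $\partial_x\det(D_u\Phi)=\partial_x(1)=0$. Conceptually this is the familiar fact that an $x$-parametrized family of symplectomorphisms is infinitesimally generated by a (time-dependent) Hamiltonian vector field; the assumption $\det(D_u\Phi)\equiv 1$ is exactly what makes the family symplectic in $u$.

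Once divergence-freeness of $G$ is established, the Poincar\'e lemma on each fiber $\sY_x$ produces the germ $K(u,x,\epsilon)$, and the transformed equation takes the transversely Hamiltonian form with
\[
\widetilde H(u,x,\epsilon)=H\bigl(\Phi(u,x,\epsilon),x,\epsilon\bigr)-x(x-\epsilon)\,K(u,x,\epsilon).
\]
The only step that requires genuine work is the divergence cancellation; everything else is bookkeeping and an appeal to the Poincar\'e lemma in two variables, and I do not expect any obstacle beyond that short computation.
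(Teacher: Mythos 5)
Your proposal is correct and follows essentially the same route as the paper: the paper's proof likewise reduces to showing that the correction field $(D_u\Phi)^{-1}\tfrac{\partial\Phi}{\partial x}$ has vanishing $u$-divergence, computing it via the identity $PJ\transp{P}=\det P\cdot J$ (your Cramer's rule step) so that after the mixed second-derivative cancellation the divergence equals $\partial_x\det D_u\Phi=0$. The only cosmetic difference is that you spell out the Poincar\'e-lemma step producing the local Hamiltonian $K$ and the resulting $\widetilde H=H\circ\Phi-x(x-\epsilon)K$, which the paper leaves implicit.
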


\begin{proof}
 It is enough to show that the system
$\frac{du}{dx}=(D_u\Phi)^{-1}\tfrac{\partial\Phi}{\partial x}$ is transversely Hamiltonian, that is, denoting 
$\left(\begin{smallmatrix} f_1 \\[3pt] f_2 \end{smallmatrix}\right):=(D_u\Phi)^{-1}\tfrac{\partial\Phi}{\partial x}$,
to show that $\partial{u_1}f_1+\partial_{u_2}f_2=0$.
Using the identity \eqref{eq:HC-PJP}, we can express
$ f_1=\tfrac{1}{\det D_u\Phi}\transp(\partial_{u_2}\!\Phi)J\partial_x\Phi,\ \
 f_2=-\tfrac{1}{\det D_u\Phi}\transp(\partial_{u_1}\!\Phi)J\partial_x\Phi,$
hence
\begin{align*}
\partial_{u_1}f_1+\partial_{u_2}f_2&=
\tfrac{1}{\det D_u\Phi}\big[ \transp(\partial_{u_2}\!\Phi)J\partial_x\partial_{u_1}\!\Phi -\transp(\partial_{u_1}\!\Phi)J\partial_x\partial_{u_2}\!\Phi\big]\\
&=\tfrac{1}{\det D_u\Phi}\partial_x\big[\transp(\partial_{u_2}\!\Phi)J\partial_{u_1}\!\Phi\big]=\tfrac{1}{\det D_u\Phi}\partial_x\det D_u\Phi=0.
\end{align*}
\end{proof}

\subsection{The formal invariant $\chi(h,x,\epsilon)$}\label{paragraph:HC-chi}

\begin{theorem}[Siegel]\label{theorem:HC-siegel}
Let  $H:(\C^2,0)\to(\C,0)$ have a non-degenerate critical point at $0$, and let $\omega$ be a symplectic volume form. 
There exists an analytic system of coordinates $u=(u_1,u_2)$ in which 
$$\omega=du_1\wedge du_2,\qquad\text{and}\qquad H=G_H(u_1u_2).$$
The function $G_H$ is uniquely determined by the pair $(H,\omega)$
up to the involution
\begin{equation}\label{eq:HC-involution}
 G_H(u_1u_2)\mapsto G_H(-u_1u_2),
\end{equation}
induced by the symplectic change of variable $J:(u_1,u_2)\mapsto(u_2,-u_1)$.
The pair $(G_H,\omega)$ is called the \emph{Birkhoff-Siegel normal form} of the pair $(H,\omega)$.
Moreover, if $(H,\omega)$ depend analytically on a parameter, then so does $G_H$ and the change of coordinates.
\end{theorem}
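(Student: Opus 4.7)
The plan is to combine the parametric analytic Morse lemma with an explicit two-parameter ansatz that simultaneously restores the symplectic form and exposes the remaining dependence of $H$ on $u_1u_2$. First I would apply the analytic Morse lemma (with parameter) to $H$, obtaining analytic coordinates $(y_1,y_2)$ in which $H=y_1y_2$; the pulled-back symplectic form then reads $\omega=g(y)\,dy_1\wedge dy_2$ with $g$ analytic and $g(0)\neq 0$, and a linear rescaling of one coordinate normalises $g(0)=1$. Writing $h=y_1y_2$ and $D=y_1\partial_{y_1}-y_2\partial_{y_2}$, split $g=g_0(h)+g_\perp(y)$ into its $D$-invariant part $g_0=\sum_k g_{kk}h^k$ and the remainder $g_\perp\in\mathrm{Im}\,D$.

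Next, I would seek new coordinates of the form
\[
u_1=y_1\sqrt{\rho(h)}\,e^{\sigma(y)},\qquad u_2=y_2\sqrt{\rho(h)}\,e^{-\sigma(y)},
\]
so that $u_1u_2=h\rho(h)$ depends only on $h$ by construction. A direct computation, exploiting $e^{\sigma}\cdot e^{-\sigma}=1$, gives
\[
du_1\wedge du_2=\bigl(h\rho(h)\bigr)'(1+D\sigma)\,dy_1\wedge dy_2.
\]
Equating this with $g=g_0+g_\perp$ and projecting onto $\ker D$ and $\mathrm{Im}\,D$ yields the two independent equations $(h\rho)'=g_0$ and $(h\rho)'\,D\sigma=g_\perp$. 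The first determines $\rho$ by $\rho(h)=h^{-1}\!\int_0^h g_0(s)\,ds$, which is analytic with $\rho(0)=g_0(0)=1$. The second reads $D\sigma=g_\perp/g_0$, whose right-hand side again lies in $\mathrm{Im}\,D$ because $1/g_0\in\ker D$; inverting $D$ monomial by monomial via $D(y_1^iy_2^j)=(i-j)y_1^iy_2^j$ yields an analytic solution $\sigma$ on any polydisc where the majorant series for $g_\perp/g_0$ converges. Setting $k(h):=h\rho(h)$, which satisfies $k'(0)=1$ and is hence analytically invertible, one obtains $H=y_1y_2=k^{-1}(u_1u_2)=:G_H(u_1u_2)$ in the new coordinates.

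For uniqueness, any transversely symplectic change of coordinates between two such normal forms must preserve the common foliation $\{u_1u_2=\mathrm{const}\}$; the symplectic automorphisms preserving this foliation are generated by the Hamiltonian flows of functions of $u_1u_2$ alone (which leave $G_H$ unchanged) together with the involution $J:(u_1,u_2)\mapsto(u_2,-u_1)$, which is symplectic and sends $u_1u_2\mapsto-u_1u_2$, producing exactly the ambiguity \eqref{eq:HC-involution}. Every step in the construction is analytic in the input data, giving the parametric statement at no extra cost. The main obstacle is the splitting step: one must verify that $D$ genuinely admits a bounded analytic inverse on $\mathrm{Im}\,D$, which reduces to the divisors $|i-j|\geq 1$ on non-diagonal monomials and a routine majorant estimate shrinking the polydisc by an arbitrarily small factor.
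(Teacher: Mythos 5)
Your existence argument is sound and is genuinely different from what the paper does: the paper simply cites Siegel (and Vey, Fran\c{c}oise--Sma\"{\i}li) for existence and disposes of uniqueness via the Gelfand--Leray period over the vanishing cycle (Section~\ref{section:HC-periodmap}), whereas you give a self-contained construction in the spirit of Fran\c{c}oise--Sma\"{\i}li: Morse lemma first, then normalization of the volume factor $g$ relative to the fibration $h=y_1y_2$. I checked the key identity $du_1\wedge du_2=(h\rho)'(1+D\sigma)\,dy_1\wedge dy_2$ for your ansatz and it is correct; the splitting into $\ker D$ (diagonal monomials) and $\mathrm{Im}\,D$ (off-diagonal monomials) is legitimate because multiplication by a function of $h$ preserves the off-diagonal part, and the inversion of $D$ costs only divisors $|i-j|\ge 1$, so convergence of $\sigma$ is immediate. (Two cosmetic points: the normalization $g(0)=1$ by rescaling one coordinate would destroy $H=y_1y_2$, but it is also unnecessary, since $k'(0)=g(0)\neq0$ already makes $k$ invertible; and the parametric statement indeed comes for free from the explicit formulas.)

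The uniqueness half, however, has a genuine gap. You assert that the symplectic automorphisms preserving the foliation $\{u_1u_2=\mathrm{const}\}$ are generated by flows of Hamiltonians depending on $u_1u_2$ alone together with $J$ --- but this assertion \emph{is} the uniqueness statement. If $\Phi_1,\Phi_2$ are two normalizations, then $\Psi=\Phi_2^{\circ(-1)}\circ\Phi_1$ is a symplectomorphism with $h\circ\Psi=\varphi(h)$, $\varphi=G_2^{\circ(-1)}\circ G_1$, and nothing in your argument rules out a nontrivial reparametrization $\varphi$ of the leaf space; the claim that only $\varphi(h)=\pm h$ can occur is exactly what must be proved. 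The missing ingredient is an argument of the type the paper uses: the action integral $\oint_{\gamma_h}u_2\,du_1=2\pi i\,h$ over the vanishing cycle is a symplectic invariant of the leaf (equivalently, the period lattice $2\pi i\Z$ of the $2\pi i$-periodic flow of $X_h$ is preserved, and $\Psi^*X_h=\varphi'(h)X_h$ forces $\varphi'\equiv\pm1$), whence $\varphi(h)=\pm h$ and $G_1(h)=G_2(\pm h)$. Once this is supplied, your description of the remaining automorphisms (exponential torus and $J$) does follow by the same determinant computation as in your existence step, but as written the uniqueness part begs the question.
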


\begin{proof}
While not explicitly stated, the existence part of the theorem is originally proved by Siegel in \cite[chap. 16 and 17]{SM}. See also \cite{Vey77} and \cite{FS94}.
The uniqueness can be seen by expressing $G_H(h)$ in terms of a period map over a vanishing cycle, see Section~\ref{section:HC-periodmap} below.
\end{proof}

\begin{remark}
\begin{itemize}
\item[--] The Theorem~\ref{theorem:HC-siegel} 
provides the existence of an analytic transformation of a Hamiltonian vector field $\dot y=J\transp(D_yH)$ in dimension 2 to its Birkhoff normal form $\dot u=J\transp(D_uG_H)$, 
with 
$$G_H(h)=\lambda h+\ldots,$$
where $\pm\lambda\neq 0$ are the eigenvalues of the linear part $JD_y^2H(0)$. The involution \eqref{eq:HC-involution}
corresponds to the freedom of choice of the eigenvalue $\lambda$.  
\item[--] The change of coordinates is far from unique. Indeed, the flow of any vector field 
$\xi=a(u_1u_2)\big(u_1\partial_{u_1}-u_2\partial_{u_2}\big)$ preserves the normal form.
\end{itemize}
\end{remark}

Let $H=H(y,x,\epsilon)$ be our germ.
By the implicit function theorem, for each small $(x,\epsilon)$, the function  $H(\cdot,x,\epsilon)$ has an isolated non-degenerate critical point $y_0(x,\epsilon)$, depending analytically on $(x,\epsilon)$.  
Let $y=\Phi(u,x,\epsilon)$ be the transformation to the Birkhoff-Siegel normal form for the function $y\mapsto H(y,x,\epsilon)$ and the form $\omega=dy_1\wedge dy_2$, depending analytically on $(x,\epsilon)$, i.e.
$$H(\cdot,x,\epsilon)\circ \Phi(u,x,\epsilon)=G_H(u_1u_2,x,\epsilon),\qquad \det D_u\Phi(u,x,\epsilon)\equiv 1.$$
By \eqref{eq:HC-changeofcoordinates}, it brings the system \eqref{eq:HC-system} to a \emph{prenormal form}
\begin{align}
 x(x-\epsilon)\frac{du}{dx}&=J\transp{(D_uG_H)}+O(x(x-\epsilon))\nonumber \\
    &=\chi(u_1u_2,x,\epsilon)\left(\begin{smallmatrix} 1 & 0 \\[3pt] 0 & -1 \end{smallmatrix}\right) u+O(x(x-\epsilon)),
\label{eq:HC-prenormalform}
\end{align}
where
\begin{equation}\label{eq:HC-chi}
 \chi(h,x,\epsilon)=\chi^{(0)}(h,\epsilon)+x\chi^{(1)}(h,\epsilon):= \tfrac{\partial G_H}{\partial h}(h,x,\epsilon)\mod x(x-\epsilon),
\end{equation}
$h=u_1u_2$, or equivalently, 
\begin{equation}\label{eq:HC-chi2}
\chi(h,x,\epsilon)=
\left\{\!\!\begin{array}{l}   
\tfrac{1}{\epsilon}\big[x \tfrac{\partial G_{H}}{\partial h}(h,\epsilon,\epsilon)-(x-\epsilon)\tfrac{\partial G_{H}}{\partial h}(h,0,\epsilon)\big],\quad\epsilon\neq 0,\\[6pt]
\tfrac{\partial G_{H}}{\partial h}(h,0,0)+x\tfrac{\partial^2G_{H}}{\partial h\partial x}(h,0,0),\quad\epsilon=0.
\end{array}\right.
\end{equation}

\begin{definition}
The function $\chi(h,x,\epsilon)$ is called a \emph{formal invariant} of the system \eqref{eq:HC-system}.
\end{definition}

For $\epsilon\neq 0$ the formal invariant $\chi$ is completely determined by
the functions $G_{H}(\cdot,0,\epsilon)$ and $G_{H}(\cdot,\epsilon,\epsilon)$ ,
which are \emph{analytic invariants of the autonomous Hamiltonian systems $X_{H,0,\epsilon}$, $X_{H,\epsilon,\epsilon}$ \eqref{eq:HC-hamiltonianvectorfield} on the strong invariant manifolds} $\sY_0,\sY_\epsilon$. 

\begin{corollary}\label{proposition:HC-chi}
The formal invariant $\chi(h,x,\epsilon)$ is well-defined
up to the involution
\begin{equation}\label{eq:HC-reflection}
J^*: \chi(h,x,\epsilon)\mapsto -\chi(-h,x,\epsilon),
\end{equation}
induced by the symplectic transformation $u\mapsto Ju$.
It is uniquely determined by the polar part of the Hamiltonian $\frac{H(y,x,\epsilon)}{x(x-\epsilon)}$,
and it is invariant with respect to fibered transversely symplectic changes of coordinates.
\end{corollary}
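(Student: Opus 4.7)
The plan is to deduce all three assertions from Siegel's Theorem~\ref{theorem:HC-siegel} applied in families, combined with the explicit formula \eqref{eq:HC-chi2}. Since $G_H(h,x,\epsilon)$ is by construction the Birkhoff-Siegel normal form of the analytic family $y\mapsto H(y,x,\epsilon)$ with respect to $\omega=dy_1\wedge dy_2$, the uniqueness part of Theorem~\ref{theorem:HC-siegel} tells us that $G_H$ is determined by $H$ up to the single global involution $G_H(h,x,\epsilon)\mapsto G_H(-h,x,\epsilon)$ induced by $(u_1,u_2)\mapsto(u_2,-u_1)$. Differentiating in $h$ and reducing modulo $x(x-\epsilon)$, this substitution induces exactly $\chi(h,x,\epsilon)\mapsto -\chi(-h,x,\epsilon)$, which is the involution \eqref{eq:HC-reflection}.

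To establish invariance under a fibered transversely symplectic change $y=\Phi(u,x,\epsilon)$, I would pass to the pulled-back Hamiltonian $\tilde H(u,x,\epsilon)=H(\Phi(u,x,\epsilon),x,\epsilon)$. Because $\Phi$ is transversely symplectic, for each $(x,\epsilon)$ the restriction $\Phi(\cdot,x,\epsilon)$ is a symplectic diffeomorphism of $(\C^2,\omega)$; composing any Birkhoff-Siegel transformation for $\tilde H$ with $\Phi$ therefore yields a Birkhoff-Siegel transformation for $H$. The parametric uniqueness in Theorem~\ref{theorem:HC-siegel} then forces $G_{\tilde H}(h,x,\epsilon)=G_H(\pm h,x,\epsilon)$ with the sign chosen globally in $(x,\epsilon)$, and passing to $\chi$ gives the claimed invariance modulo \eqref{eq:HC-reflection}.

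For the dependence only on the polar part of $\tfrac{H(y,x,\epsilon)}{x(x-\epsilon)}$, formula \eqref{eq:HC-chi2} displays $\chi$, for $\epsilon\neq 0$, as the linear interpolation in $x$ between $\partial_hG_H(h,0,\epsilon)$ and $\partial_hG_H(h,\epsilon,\epsilon)$. Each of these is completely determined by the Birkhoff-Siegel normal form of the restricted Hamiltonian $H(\cdot,0,\epsilon)$, respectively $H(\cdot,\epsilon,\epsilon)$, and these restrictions are exactly the numerators of the two residues $\mathrm{Res}_{x=0}\tfrac{H}{x(x-\epsilon)}=-\tfrac{H(y,0,\epsilon)}{\epsilon}$ and $\mathrm{Res}_{x=\epsilon}\tfrac{H}{x(x-\epsilon)}=\tfrac{H(y,\epsilon,\epsilon)}{\epsilon}$, i.e.\ the polar part. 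The case $\epsilon=0$ is handled either by the second line of \eqref{eq:HC-chi2}, where $H(\cdot,0,0)$ and $\partial_xH(\cdot,0,0)$ are precisely the coefficients of the double-pole polar part $\tfrac{H(y,0,0)}{x^2}+\tfrac{\partial_xH(y,0,0)}{x}$, or by analytic continuation in $\epsilon$.

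The main subtle point I anticipate is the consistency of the involutive ambiguity. A priori, applying Siegel's theorem separately on each of the two strong invariant fibers $\sY_0$ and $\sY_\epsilon$ would permit \emph{independent} sign choices on each, producing a $(\Z/2)^2$ ambiguity rather than the claimed $\Z/2$. What prevents this is precisely the parametric form of Theorem~\ref{theorem:HC-siegel}: the normalization of the full analytic family $H(\cdot,x,\epsilon)$ produces a single germ $G_H(h,x,\epsilon)$ whose sign is fixed once and for all on the $(x,\epsilon)$-domain, so the two restrictions $G_H(\cdot,0,\epsilon)$ and $G_H(\cdot,\epsilon,\epsilon)$ — and hence $\chi$ itself — inherit a single joint involution.
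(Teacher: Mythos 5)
Your proposal follows, in substance, the same route as the paper's (implicit) justification: the corollary is stated there without a separate proof, resting exactly on the parametric uniqueness in Theorem~\ref{theorem:HC-siegel}, the definition \eqref{eq:HC-chi}--\eqref{eq:HC-chi2}, and the remark that for $\epsilon\neq 0$ the invariant is determined by the Birkhoff--Siegel data of the restrictions $H(\cdot,0,\epsilon)$, $H(\cdot,\epsilon,\epsilon)$, i.e.\ by the residues of $\tfrac{H}{x(x-\epsilon)}$. Your resolution of the sign ambiguity (a single global involution rather than independent signs on the two fibers, by analytic dependence of $G_H$ on $(x,\epsilon)$) is also the intended one.

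One step, however, is not justified as you state it. In the invariance paragraph you treat the transformed system as if its Hamiltonian were $\tilde H=H\circ\Phi$; it is not. By \eqref{eq:HC-changeofcoordinates} the change of variables produces the additional term $-x(x-\epsilon)(D_u\Phi)^{-1}\partial_x\Phi$, which by Lemma~\ref{lemma:HC-transhamilton} is transversely Hamiltonian, so the Hamiltonian of the new system agrees with $H\circ\Phi$ only modulo $x(x-\epsilon)$. Consequently, fiberwise symplectic uniqueness of the Birkhoff--Siegel form applied to $H\circ\Phi$ does not by itself yield the invariance of $\chi$; it must be combined with your polar-part statement (that $\chi$ depends only on $H$ modulo $x(x-\epsilon)$), or, equivalently, one restricts for $\epsilon\neq 0$ to the invariant fibers $\sY_0,\sY_\epsilon$, where the correction term vanishes and $\Phi$ symplectically conjugates the autonomous fields $X_{H,0,\epsilon}$, $X_{H,\epsilon,\epsilon}$, and then treats $\epsilon=0$ by continuity. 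Relatedly, for $\epsilon=0$ your polar-part claim uses that the $1$-jet in $x$ of $G_H$ at $(x,\epsilon)=(0,0)$ is determined by the $1$-jet in $x$ of $H$; this is true, but as written it is asserted rather than proved --- it follows either by differentiating the period-map formula \eqref{eq:HC-periodmap} of Section~\ref{section:HC-periodmap} in $x$, or by the continuity-in-$\epsilon$ argument you mention (two germs equal modulo $x(x-\epsilon)$ give equal $\chi$ for all $\epsilon\neq 0$, hence at $\epsilon=0$ by the limit, with the sign choice consistent by analyticity). With these two points made explicit, your proof is complete and coincides with the paper's.
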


Let
$$\lambda(x,\epsilon)=\chi(0,x,\epsilon).$$
Then  $\pm\lambda(x,\epsilon)$ are the eigenvalues of the matrix $A(x,\epsilon)=J\transp{D_y^2H(0,x,\epsilon)}$ modulo $x(x-\epsilon)$, see Example~\ref{example:HC-2},
and the involution \eqref{eq:HC-reflection} corresponds to the freedom of choice of the eigenvalue $\lambda$.

\begin{example}[Traceless linear systems]\label{example:HC-2}
A traceless linear system
\begin{equation}\label{eq:HC-linsyst}
 x(x-\epsilon)\frac{dy}{dx}=A(x,\epsilon)y,
\end{equation}
with $\tr A(x,\epsilon)=0$ and $A(0,0)\sim \left(\begin{smallmatrix} \lambda^{(0)}(0) & 0 \\[3pt] 0 & -\lambda^{(0)}(0) \end{smallmatrix}\right)$ for some $\lambda^{(0)}(0)\neq 0$,
is of the form \eqref{eq:HC-systemshort} for the quadratic form $H(y,x,\epsilon)=\tfrac{1}{2} \transp{y}J\transp{A(x,\epsilon)}y$.
Let $\pm\tilde\lambda(x,\epsilon)$ be the eigenvalues of $A(x,\epsilon)$, and let $C(x,\epsilon)$ be a corresponding matrix of eigenvectors of $A(x,\epsilon)$, depending analytically on $(x,\epsilon)$ and normalized so that $\det C(x,\epsilon)=1$.
The change of variable $y=C(x,\epsilon)u$, brings the system \eqref{eq:HC-linsyst} to
\begin{equation*}
 x(x-\epsilon)\frac{du}{dx}=\tilde\lambda(x,\epsilon)\left(\begin{smallmatrix} 1 & 0 \\ 0 & -1 \end{smallmatrix}\right)u+ x(x-\epsilon)C^{-1}\frac{dC}{dx}.
\end{equation*}
Denoting $\lambda(x,\epsilon):=\big(\lambda^{(0)}(\epsilon)+x\lambda^{(1)}(\epsilon)\big)=\tilde\lambda(x,\epsilon)\mod x(x-\epsilon)$,
then we have $\chi(h,x,\epsilon)=\pm\lambda(x,\epsilon)$.  
\end{example}

\subsubsection{Geometric interpretation of the invariant $\chi$.}\label{section:HC-periodmap}

For each small $(x,\epsilon)$, the function  $H(\cdot,x,\epsilon)$ has an isolated non-degenerate critical point $y_0(x,\epsilon)$, depending analytically on $(x,\epsilon)$, with a critical value $h_0(x,\epsilon)$.  
For $(x,\epsilon)$ fixed, $h\in(\C,h_0)$, consider the germ of the level set $S_h(x,\epsilon)=\{y\in(\C^2,y_0(x,\epsilon)): H(y,x,\epsilon)=h\}\subset \sY_x$. 
As a basic fact of the Picard--Lefschetz theory \cite{AVG}, we know that if $h$ is a non-critical value for $H(\cdot,x,\epsilon)$, i.e. $h\neq h_0$, then $S_h(x,\epsilon)$ has the homotopy type of a circle. 
Let $\gamma_h(x,\epsilon)$ depending continuously on $(x,\epsilon)$ be a loop generating the first homology group of $S_h(x,\epsilon)$, the so called \emph{vanishing cycle}.
And let $\mu$ be a 1-form such that 
$\omega=dH\wedge\mu;$ 
its restriction to a non-critical level $S_h(x,\epsilon)$ is called the \emph{Gelfand-Leray} form of $\omega$ and is denoted 
$$\mu=\frac{\omega}{dH}.$$
Its period function over the vanishing cycle
\begin{equation}\label{eq:HC-periodmap}
p(h,x,\epsilon):=\tfrac{1}{2\pi i}\int_{\gamma_h(x,\epsilon)} \frac{\omega}{dH},
\end{equation}
is well-defined up to a sign change (orientation of $\gamma_h$),
and depends analytically on $(x,\epsilon)$ \cite[chap. 10]{AVG}.
Let $G_{H}(\cdot,x,\epsilon)$ be the inverse of the function $h\mapsto \int_{h_0(x,\epsilon)}^h p(s,x,\epsilon)\,ds$. 
Then $(G_H,\omega)$ is the Birkhoff-Siegel normal form of $(H,\omega)$. 

Indeed, the above formula for $G_H$ is invariant with respect to analytic transversely symplectic changes of coordinates:
Supposing that $H=g(y_1y_2,x,\epsilon)$ is in its Birkhoff-Siegel normal form, then the level sets are written as 
$S_h=\{y_1\neq 0,\ y_2=\tfrac{g^{\circ(-1)}(h,x,\epsilon)}{y_1}\}$, and 
$\frac{\omega}{dH}=\frac{dy_1}{y_1\cdot \frac{\partial g}{\partial(y_1y_2)}\circ g^{\circ(-1)}(h,x,\epsilon)}$, and therefore
$p(h)=\big(\frac{\partial g}{\partial(y_1y_2)}\big)^{-1}\circ g^{\circ(-1)}(h,x,\epsilon)$, i.e. $G_H=g$. 


\medskip
The above formula for the the Birkhoff-Siegel normal form and hence for the formal invariant $\chi$ involves a double inversion which makes it difficult to calculate. 
The following proposition, which will be proved in Section~\ref{section:HC-prooflemma}, allows to determine it in some special cases. 
This will be useful in the case of the fifth Painlevé equation (Section~\ref{section:HC-PV}).

\begin{proposition}[Birkhoff-Siegel normal form of an autonomous Hamiltonian system]\label{proposition:HC-lemma}~

\noindent
Let $H(y)$ be of the form
$$H(y)=G(h)+y_i\Delta(y_i,h),\quad h=y_1y_2,$$
for some $i\in\{1,2\}$, with $G,\Delta$ analytic germs, and
$$G(h)=\lambda h+O(h^2),\quad \lambda\neq 0.$$
Then $(G,\omega)$ is the Birkhoff-Siegel normal form for the pair $(H,\omega)$, $\omega=dy_1\wedge dy_2$.
\end{proposition}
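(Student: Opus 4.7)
My plan is to exploit the geometric characterization of the Birkhoff--Siegel normal form via the period map of the Gelfand--Leray form, established in Section~\ref{section:HC-periodmap}. By that characterization, to prove $G_H=G$ it suffices to show that the integrated period $P(h):=\int_{h_0}^h p(s)\,ds$ satisfies $P=G^{-1}$, where $h_0=G(0)$ is the critical value.

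\textbf{Step 1 (Gelfand--Leray form).} Take $\mu=-\frac{dy_1}{\partial_{y_2}H}$, which satisfies $dH\wedge\mu=\omega$. Assuming $i=1$ (the case $i=2$ is symmetric), differentiate
$$H(y)=G(h)+y_1\Delta(y_1,h),\qquad h=y_1y_2,$$
to get
$$\partial_{y_2}H=y_1\bigl[G'(h)+y_1\,\partial_h\Delta(y_1,h)\bigr],$$
so $\mu$ acquires an explicit factor $1/y_1$. This factor is the whole point.

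\textbf{Step 2 (Parametrize the level set and locate the vanishing cycle).} Since $G'(0)=\lambda\neq 0$, the implicit function theorem applied to $G(s)+y_1\Delta(y_1,s)=h$ solves $s=\eta(y_1,h)$ locally around $y_1=0$, with $\eta(0,h)=G^{-1}(h)$. Hence $S_h$ is locally parametrized by $y_1\in(\C^*,0)$ via $y_2=\eta(y_1,h)/y_1$, and a small loop $\gamma_h=\{|y_1|=r\}$ represents the vanishing cycle: this is standard for the quadratic model $G(s)=\lambda s$, and the general case follows by Picard--Lefschetz continuation along the deformation $t\mapsto G(h)+t\cdot y_1\Delta(y_1,h)$, $t\in[0,1]$, throughout which the critical point at $0$ remains Morse.

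\textbf{Step 3 (Residue and integration).} On $S_h$ the form
$$\mu\bigr|_{S_h}=-\frac{dy_1}{y_1\bigl[G'(\eta(y_1,h))+y_1\,\partial_h\Delta(y_1,\eta(y_1,h))\bigr]}$$
is meromorphic in $y_1$ with a simple pole at $y_1=0$ of residue $-1/G'(G^{-1}(h))$, so
$$p(h)=\frac{1}{2\pi i}\int_{\gamma_h}\mu=\pm\frac{1}{G'(G^{-1}(h))}.$$
Substituting $s=G(t)$, $ds=G'(t)\,dt$ in $P(h)=\int_{h_0}^h p(s)\,ds$ yields $P(h)=\pm\int_0^{G^{-1}(h)}dt=\pm G^{-1}(h)$, so $G_H=P^{-1}=G$ up to the involution \eqref{eq:HC-involution}, which is exactly the uniqueness allowed by Theorem~\ref{theorem:HC-siegel}.

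The one step I expect to require the most care is Step 2, namely verifying that the small loop $\{|y_1|=r\}$ on $S_h$ really generates $H_1(S_h;\Z)$. The cleanest way is to note that the deformation above leaves $S_h$ diffeotopic to the model level set $\{y_1y_2=G^{-1}(h)\}\cong\C^*$, so the generator can be tracked. Once Step 2 is in place, the remainder reduces to the one-line residue computation of Step 3.
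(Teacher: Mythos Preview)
Your argument is correct and is a genuinely different route from the paper's. The paper proves the proposition by \emph{constructing} a formal symplectic transformation $\Phi$ with $H\circ\Phi=G$: it builds $\Phi$ recursively as an infinite composition of time-$1$ flows of Hamiltonian vector fields $f_{k,l}X_{h^ku_i^l}$, each chosen to kill the coefficient of $h^ku_i^l$ in $H\circ\Phi_{k,l}$; having obtained a formal equivalence between $(H,\omega)$ and $(G,\omega)$, it then invokes the fact (proved via the period-map characterization) that formal and analytic equivalence coincide for Morse pairs, so $G_H=G$. Your approach bypasses the transformation entirely and computes the period integral \eqref{eq:HC-periodmap} directly, the key observation being that $\partial_{y_2}H$ factors as $y_1\cdot(\text{unit})$, reducing the period to a simple residue. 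This is shorter and more elementary, and in fact extends the model computation the paper already carries out in Section~\ref{section:HC-periodmap} for $H=g(y_1y_2)$. The paper's approach, on the other hand, is constructive---it exhibits the normalizing map and shows it can be taken to depend only on $(h,u_i)$---which is in the spirit of the recursive normalizations used elsewhere in Section~\ref{sec:HC-proof}. Your caution about Step~2 is appropriate but the deformation argument you sketch is adequate: the family $H_t=G(h)+t\,y_1\Delta$ has a Morse point for all $t\in[0,1]$ (since $\partial_{y_2}H_t=y_1\cdot(\lambda+O(h,y_1))$ forces $y_1=0$ at any critical point, and then $\partial_{y_1}H_t\big|_{y_1=0}=\lambda y_2+t\Delta(0,0)$), so the level sets are diffeotopic and the cycle $\{|y_1|=r\}$ tracks the generator throughout.
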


\begin{corollary}[Invariant $\chi$ for $\epsilon=0$]
For $\epsilon=0$, suppose that
$$H(y,x,0)=G^{(0)}(h,0)+xH^{(1)}(y,0)+O(x^2),\quad h=y_1y_2,$$
where $G^{(0)}(h,0)=\lambda^{(0)}(0) h+O(h^2)$, $\lambda^{(0)}(0)\neq 0$.
Write 
$$H^{(1)}(y,0)=G^{(1)}(h,0)+y_1\Delta_1(y_1,h,0)+y_2\Delta_2(y_2,h,0),$$
with $\Delta_i(y_i,h,0)=O(y_i)$.
Then $$\chi(h,x,0)=\tfrac{\partial}{\partial h}\big(G^{(0)}(h,0)+x\,G^{(1)}(h,0)\big)$$
is the formal invariant of the vector field $Z_{H,0}=x^2\partial_x+X_{H}$ associated to $H$.
\end{corollary}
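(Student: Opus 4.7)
The plan is to reduce the statement via formula~\eqref{eq:HC-chi2} at $\epsilon=0$, which gives
\[\chi(h,x,0)=\partial_h G_H(h,0,0)+x\,\partial_h\partial_x G_H(h,0,0),\]
so it suffices to identify the two coefficients $G_H(h,0,0)=G^{(0)}(h,0)$ and $\partial_x G_H(h,0,0)=G^{(1)}(h,0)$ of the Birkhoff--Siegel normal form of the family $y\mapsto H(y,x,0)$. The value at $x=0$ is immediate: $H(y,0,0)=G^{(0)}(y_1y_2,0)$ is already in normal form (Proposition~\ref{proposition:HC-lemma} with $\Delta\equiv0$), so the uniqueness clause of Theorem~\ref{theorem:HC-siegel} forces $G_H(h,0,0)=G^{(0)}(h,0)$, and the parametric Siegel transformation $y=\Phi(u,x,0)$ can be normalized (by right-composing with the $u$-symmetry $\Phi(\,\cdot\,,0,0)^{-1}$ of $G^{(0)}$) so that $\Phi(u,0,0)=u$.

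To compute the first-order coefficient, I would differentiate the identity $H(\Phi(u,x,0),x,0)=G_H(u_1u_2,x,0)$ at $x=0$. With $\phi(u):=\partial_x\Phi(u,0,0)$ and using $D_yH(u,0,0)=(G^{(0)})'(h,0)(u_2,u_1)$, the splitting $H^{(1)}(u,0)=G^{(1)}(h,0)+u_1\Delta_1+u_2\Delta_2$ gives
\[G^{(1)}(h,0)+u_1\Delta_1(u_1,h,0)+u_2\Delta_2(u_2,h,0)+(G^{(0)})'(h,0)\bigl(u_2\phi_1+u_1\phi_2\bigr)=\partial_x G_H(h,0,0).\]
Transverse symplecticity $\det D_u\Phi\equiv1$ yields $\partial_{u_1}\phi_1+\partial_{u_2}\phi_2=0$ at order~$x$, so the Poincar\'e lemma produces a germ $K(u)$ with $\phi=(\partial_{u_2}K,-\partial_{u_1}K)$, and therefore $u_2\phi_1+u_1\phi_2=-\xi(K)$ for the vector field $\xi:=u_1\partial_{u_1}-u_2\partial_{u_2}$.

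The final step is to project the displayed equation onto the kernel of $\xi$, i.e.\ onto the subspace of germs depending on $h=u_1u_2$ alone. The hypothesis $\Delta_i(u_i,h,0)=O(u_i)$ ensures that every monomial of $u_1\Delta_1+u_2\Delta_2$ is of the form $u_1^a u_2^b$ with $a\neq b$, so it lies in the complement of $\ker\xi$; likewise $\xi(K)$ lies in that complement by construction. The diagonal projection therefore reads $\partial_x G_H(h,0,0)=G^{(1)}(h,0)$, which plugged back into~\eqref{eq:HC-chi2} yields the claim. The main subtlety I foresee is formalizing this resonant/non-resonant splitting and checking that the complementary equation $(G^{(0)})'(h,0)\,\xi(K)=u_1\Delta_1+u_2\Delta_2$ is indeed solvable for $K$ (since $\xi$ is invertible on the non-resonant germs, again thanks to the vanishing of $\Delta_i$ on $u_i=0$), which is the infinitesimal counterpart at $x=0$ of the parametric Siegel theorem.
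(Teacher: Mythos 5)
Your proof is correct, but it follows a genuinely different route than the paper. The paper's proof is a two-line deformation trick: it embeds $H(y,x,0)$ into the family $H(y,x,\epsilon)=G^{(0)}(h,0)+xG^{(1)}(h,0)+x\,y_1\Delta_1+(x-\epsilon)\,y_2\Delta_2+O(x(x-\epsilon))$, so that the restrictions to the two fibers $x=0$ and $x=\epsilon$ fall under Proposition~\ref{proposition:HC-lemma}; this gives $G_H(\cdot,0,\epsilon)=G^{(0)}$ and $G_H(\cdot,\epsilon,\epsilon)=G^{(0)}+\epsilon G^{(1)}$, and the $\epsilon\neq 0$ branch of \eqref{eq:HC-chi2}, together with the facts that $\chi$ only depends on $H$ modulo $x(x-\epsilon)$ and depends analytically on $\epsilon$, yields the claim. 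You instead work directly at $\epsilon=0$: you normalize the parametric Siegel transformation so that $\Phi(\cdot,0,0)=\id$ (legitimate, since by the uniqueness clause $\Phi(\cdot,0,0)$ preserves $h$ once the involution branch is fixed), differentiate the conjugacy in $x$, use $\det D_u\Phi\equiv 1$ to write $\phi=\partial_x\Phi(\cdot,0,0)$ as a Hamiltonian field so that $u_2\phi_1+u_1\phi_2=-\xi K$ has no diagonal part, and project onto $\ker\xi$. This is essentially the first-order-in-$x$ cohomological computation that underlies the paper's own proof of Proposition~\ref{proposition:HC-lemma} (the iterative flow construction of Section~\ref{section:HC-prooflemma}), so your route bypasses both the deformation and Proposition~\ref{proposition:HC-lemma}, and it even shows that the hypothesis $\Delta_i=O(y_i)$ is superfluous, since $u_1\Delta_1+u_2\Delta_2$ is non-resonant for arbitrary germs $\Delta_i(y_i,h)$. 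One remark: the ``main subtlety'' you flag---solvability of $(G^{(0)})'\,\xi K=u_1\Delta_1+u_2\Delta_2$ for $K$---needs no checking at all: the transformation $\Phi$ of Theorem~\ref{theorem:HC-siegel} exists and is analytic in the parameter $x$, so its $x$-derivative automatically satisfies the full identity, and your argument only uses its diagonal projection; likewise the involution ambiguity $G_H(h,0,0)=G^{(0)}(\pm h,0)$ is harmless because $\chi$ is only defined up to \eqref{eq:HC-reflection}.
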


\begin{proof}
Consider a deformation 
$$H(y,x,\epsilon)=G^{(0)}(h,0)+xG^{(1)}(h,0)+xy_1\Delta_1(y_1,h,0)+(x-\epsilon)y_2\Delta_2(y_2,h,0)+O(x(x-\epsilon)),$$
and calculate the Birkhoff-Siegel invariants for $H(y,0,\epsilon)$, $H(y,\epsilon,\epsilon)$, using Proposition~\ref{proposition:HC-lemma}.
\end{proof}

\subsection{Model system (formal normal form)}

\begin{definition}[Model family]\label{definition:HC-normalform}
Let $\chi(h,x,\epsilon)$ be the formal invariant of the system \eqref{eq:HC-system}.
The \emph{model family} (\emph{formal normal form}) for the the system \eqref{eq:HC-system} is the family of systems
\begin{equation}\label{eq:HC-normalform}
\begin{aligned}
x(x-\epsilon)\tfrac{du_1}{dx}&=\ \chi(u_1u_2,x,\epsilon)\cdot u_1\\
x(x-\epsilon)\tfrac{du_2}{dx}&=-\chi(u_1u_2,x,\epsilon)\cdot u_2,
\end{aligned}
\end{equation}
which is Hamiltonian with respect to the Hamiltonian function $\frac{G(u_1u_2,x,\epsilon)}{x(x-\epsilon)}$,
\begin{equation}
 G(h,x,\epsilon)=\int_0^h \chi(s,x,\epsilon)\,ds.
\end{equation}
The formal normal form of the family $Z_{H,\epsilon}$ is the associated family of vector fields
\begin{equation}\label{eq:HC-vfnormalform}
Z_{G,\epsilon}=x(x-\epsilon)\partial_x+\chi(u_1u_2,x,\epsilon)\big(u_1\partial_{u_1}-u_2\partial_{u_2}\big).
\end{equation}

The system \eqref{eq:HC-normalform} is \emph{integrable} with the function $h(u)=u_1u_2$ being its first integral, $Z_{G,\epsilon}\cdot h=0$.
The general solutions of \eqref{eq:HC-normalform} are of the form
\begin{equation}\label{eq:HC-u}
\begin{aligned}
u_1(x,\epsilon;c)&=c_1 E_\chi(c_1c_2,x,\epsilon),\\
u_2(x,\epsilon;c)&=c_2 E_\chi(c_1c_2,x,\epsilon)^{-1},
\end{aligned}
\qquad 
c=(c_1,c_2)\in\C^2,
\end{equation}
where
\begin{equation}\label{eq:HC-E}
E_\chi(h,x,\epsilon)= 
\left\{\!\!\begin{array}{ll} 
x^{-\frac{\chi^{(0)}(h,\epsilon)}{\epsilon}}(x-\epsilon)^{\frac{\chi^{(0)}(h,\epsilon)}{\epsilon}+\chi^{(1)}(h,\epsilon)}, & \text{for $\epsilon\neq 0$},\\[6pt]
e^{-\frac{\chi^{(0)}(h,0)}{x}}x^{\chi^{(1)}(h,0)}, & \text{for $\epsilon= 0$}.
\end{array}\right.
\end{equation}
\end{definition}

\section{Formal and sectoral normalization theorem}

Throughout the text we will denote
\begin{equation}
 \sY=\{|y|<\delta_y\},\qquad \sU=\{|u|<\delta_u\},\qquad \sX=\{|x|<\delta_x\},\qquad \sE=\{|\epsilon|<\delta_\epsilon\},
\end{equation}
for some $\delta_y, \delta_u, \delta_x,\delta_\epsilon>0$, and implicitly suppose that $\delta_\epsilon<<\delta_x$ so that the singular points $x=0,\epsilon$ are both well inside $\sX$.

\begin{definition}[Family of spiraling sectoral domains $\sX_\pm(\epsilon)$, $\epsilon\in \sE_\pm$]\label{definition:HC-X}
Let $\eta>0$ be  an arbitrarily small constant, and let $\delta_x>>\delta_\epsilon>0$ be radii of small discs at 0 in the $x$-and $\epsilon$-space.
Let $\lambda(x,\epsilon)=\chi(0,x,\epsilon),$ and let
\begin{equation}
\sE_\pm:=\{ |\epsilon|<\delta_\epsilon,\ |\arg\big(\pm\tfrac{\epsilon}{\lambda(0,0)}\big)|<\pi-2\eta \},  
\end{equation}
be two sectors in the $\epsilon$-space.
For $\epsilon\in \sE_\pm$ define a domain 
$$\sX_\pm(\epsilon)$$
in the $x$-space as a simply connected ramified domain spanned by the complete real trajectories of the vector fields
\begin{equation}\label{eq:HC-xvectfield}
 e^{i\omega_\pm}\cdot\tfrac{x(x-\epsilon)}{\lambda(x,\epsilon)}\partial_x
\end{equation}
that never leave the disc of radius $\delta_x$,
where the phase $\omega_\pm$ varies continuously in the interval
\begin{equation}\label{eq:HC-omega}
\left\{\!\!\begin{array}{l}   
\max\{0,\arg(\frac{\pm\epsilon}{\lambda(0,0)})\}-\frac{\pi}{2}+\eta<\omega_\pm<\min\{0,\arg(\frac{\pm\epsilon}{\lambda(0,0)})\}+\frac{\pi}{2}-\eta,\quad\text{for $\epsilon\neq 0$},\\[6pt]
|\omega_\pm|<\frac{\pi}{2}-\eta,\quad\text{for $\epsilon=0$.}
\end{array}\right.
\end{equation}
The constraints \eqref{eq:HC-omega} on the variation of $\omega_\pm$ are such that the real dynamics of the vector field \eqref{eq:HC-xvectfield} and the asymptotic behavior of the solutions
\eqref{eq:HC-u} would not change drastically depending on $\omega_\pm$. Namely, for $\epsilon\neq 0$:
\begin{itemize}
 \item The point $x=\epsilon$ is repulsive when $|\arg \frac{\epsilon}{\lambda(x,\epsilon)} -\omega_\pm|<\frac{\pi}{2}$ and attractive when
 $|\arg \frac{-\epsilon}{\lambda(x,\epsilon)} -\omega_\pm|<\frac{\pi}{2}$, and vice-versa for the point $x=0$.
 \item The $u_1$-component of the solution \eqref{eq:HC-u} tends  to $0$ along a negative real trajectory of \eqref{eq:HC-xvectfield}  and to $\infty$ along a positive real trajectory for $|\omega_\pm|<\tfrac{\pi}{2}$, and vice-versa for the $u_2$-component. 
\end{itemize}
For $\epsilon=0$, the domain $\sX_+(0)=\sX_-(0)$ consists of a pair of overlapping sectoral domains $\sX^{\uppie}(0), \sX^{\downpie}(0)$ of opening $2\pi-2\eta$ with a common point at $x=0$. See Figure~\ref{figure:HC-X}.
\end{definition}

Denoting $x_{1,\pm}(\epsilon)$ the \emph{attractive} equilibrium point of \eqref{eq:HC-xvectfield} and $x_{2,\pm}(\epsilon)$ the \emph{repulsive} one,
\begin{equation}\label{eq:HC-x12}
x_{1,+}(\epsilon)=x_{2,-}(\epsilon)=0, \qquad x_{1,-}(\epsilon)=x_{2,+}(\epsilon)=\epsilon, 
\end{equation}
then 
$$u_i(x,\epsilon)\to \infty,\ u_{j}(x,\epsilon)\to 0,\ (j=3-i),\ \text{ when }\ x\to x_{i,\pm}(\epsilon)\ \text{along a real trajectory of \eqref{eq:HC-xvectfield}}.$$

\begin{figure}[t]
\centering
\includegraphics[width=.9\textwidth]{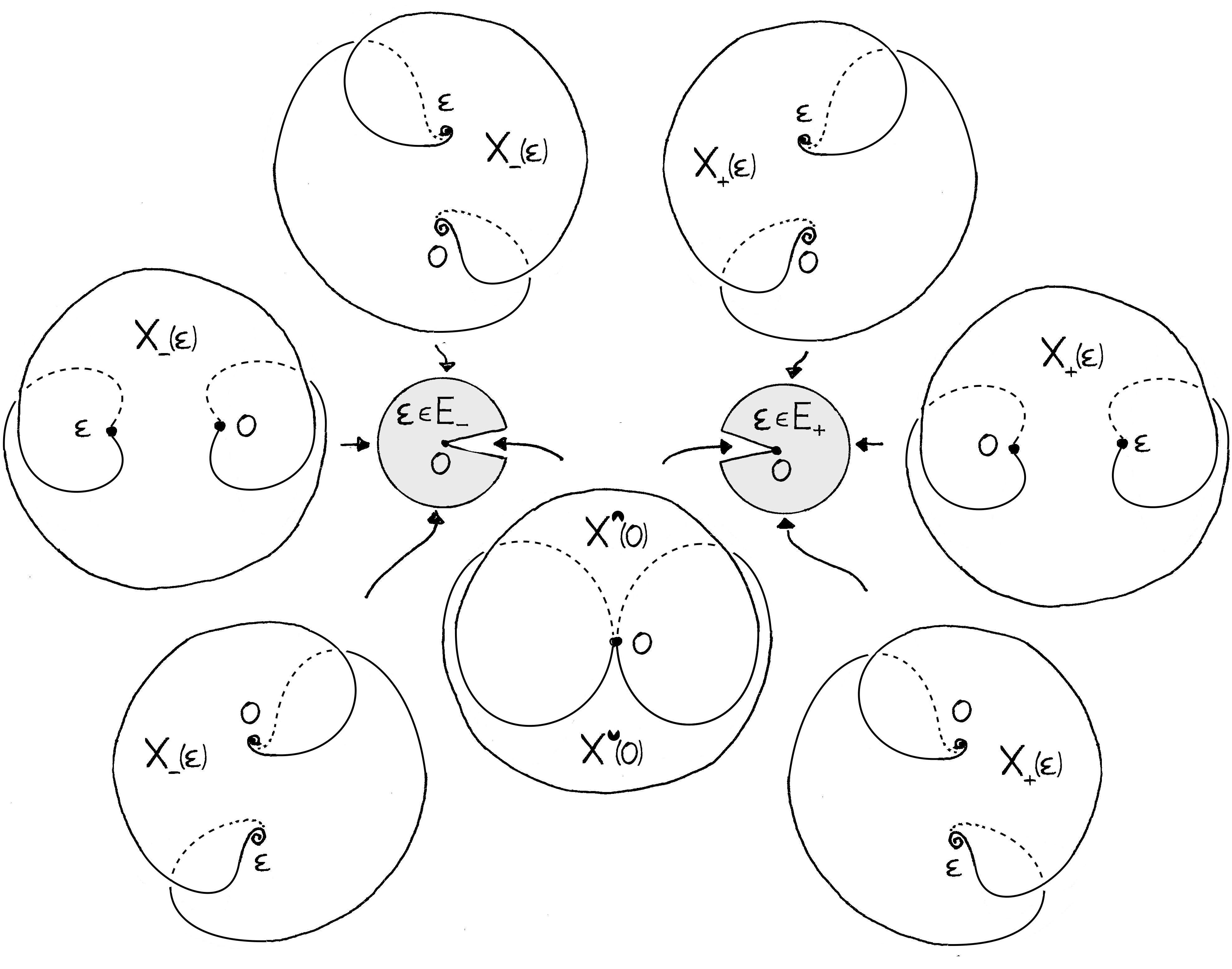}
\caption{The domains $\sX_\pm(\epsilon)$ in dependence on $\epsilon\in\sE_\pm$. (Picture with $\lambda^{(0)}(0)=1$.)}
\label{figure:HC-X}
\end{figure}

Before giving a general theorem on sectoral normalization for the parametric family \eqref{eq:HC-system}, let us first state it for the limit system with $\epsilon=0$
which has an irregular singularity of Poincaré rank 1 at $x=0$.

\begin{theorem}[Formal and sectoral normalization at $\epsilon=0$]\label{theorem:HC-normalization0}
\noindent
The system \eqref{eq:HC-system} with $\epsilon=0$ can be brought to its formal normal form \eqref{eq:HC-normalform} 
through a formal transversely symplectic change of coordinates 
\begin{equation}\label{eq:HC-hatPhi0}
(y,x)=(\hat{\mbf\Psi}(u,x,0),x),\qquad \hat{\mbf\Psi}(u,x,0)=\sum_{k\geq 0}\psi^{(k)}(u)x^k,
\end{equation}
where $\psi^{(k)}(u)$ are analytic in $u$ on a fixed neighborhood 
$\sU$ of $0$.
This formal series is generally divergent, but it is Borel 1-summable, with a pair of Borel sums $\mbf\Psi^{\uppie}(u,x,0)$ and $\mbf\Psi^{\downpie}(u,x,0)$
defined respectively above the sectors $x\in \sX^{\uppie}(0), \sX^{\downpie}(0)$
of Definition~\ref{definition:HC-X} (for some $0<\eta<\tfrac{\pi}{2}$ arbitrarily small and some $\delta_x>0$ depending on $\eta$), and $u\in \sU$.
The fibered sectoral transformations 
$(y,x)=(\mbf\Psi^\bullet(u,x,0)$, $\bullet=\uppie,\downpie$,
are transversely symplectic
and
bring the system \eqref{eq:HC-system} with $\epsilon=0$ to its formal normal form.
\end{theorem}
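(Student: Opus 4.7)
My plan is to proceed in three stages: an analytic, pointwise-in-$x$ reduction to the prenormal form \eqref{eq:HC-prenormalform}; a formal power-series normalization through cohomological equations; and a sectoral realization of the formal transformation by a trajectorial fixed-point argument.

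\emph{Formal construction.} First, apply Theorem~\ref{theorem:HC-siegel} to $H(\cdot,x,0)$ with $x\in\sX$ treated as an analytic parameter; this produces a transversely symplectic change of coordinates $(y,x)\mapsto(\Phi_0(u,x),x)$, analytic on $\sU\times\sX$, that brings the system into the prenormal form
\begin{equation*}
 x^2\tfrac{du}{dx}=\chi(u_1u_2,x,0)\bigl(\begin{smallmatrix}1&0\\0&-1\end{smallmatrix}\bigr)u+x^2\, J\transp{(D_uS)}(u,x),
\end{equation*}
the remainder being transversely Hamiltonian by Lemma~\ref{lemma:HC-transhamilton} with generating function $S$ analytic on $\sU\times\sX$. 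Next, I look for the further correction $u=\hat\Psi_1(v,x)$ as the time-$1$ flow of a transversely Hamiltonian vector field $X_{\hat T}$ with generating function $\hat T(v,x)=\sum_{k\geq 2}T_k(v)x^k$, and set $\hat{\mbf\Psi}:=\Phi_0\circ\hat\Psi_1$. The conjugation equation $(\hat\Psi_1)^*Z=Z_{G,0}$, written at the level of Hamiltonians, yields, order by order in $x$, cohomological equations
\begin{equation*}
 Z_{G,0}\cdot T_k=F_k,\qquad Z_{G,0}=x^2\partial_x+\chi(v_1v_2,x,0)(v_1\partial_{v_1}-v_2\partial_{v_2}),
\end{equation*}
with $F_k$ depending only on $S,\chi$, and on $T_2,\dots,T_{k-1}$. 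Decomposing into monomials in $(v_1,v_2)$, non-resonant modes $v_1^av_2^b\phi(h)$ with $a\neq b$ are inverted by dividing by the nonzero factor $(a-b)\chi(h,x,0)$, while resonant modes $\phi(h)x^k$ with $k\geq 2$ are solved by $\tfrac{1}{k-1}\phi(h)x^{k-1}$. This uniquely determines $\hat{\mbf\Psi}$ as a formal transformation of the form \eqref{eq:HC-hatPhi0}.

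\emph{Sectoral realization.} Classical estimates for rank-$1$ saddle-node singularities give Gevrey-$1$ bounds $\|\psi^{(k)}\|_\sU\leq CM^k k!$, so $\hat{\mbf\Psi}$ is $1$-summable. The sectoral sums $\mbf\Psi^{\bullet}$ on $\sU\times\sX^{\bullet}(0)$, $\bullet\in\{\uppie,\downpie\}$, are constructed as the unique analytic fixed point of an integral reformulation of the conjugation equation, in which $Z_{G,0}^{-1}$ is realized by integration of each non-resonant mode along the real trajectories of the vector field \eqref{eq:HC-xvectfield} that sweep out $\sX^{\bullet}(0)$. Transverse symplecticity is preserved because the iteration takes place at the level of generating functions, and each $\mbf\Psi^\bullet$ admits $\hat{\mbf\Psi}$ as its Gevrey-$1$ asymptotic expansion on $\sU\times\sX^\bullet(0)$.

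The main obstacle lies in this last step: showing that the sectoral sums extend analytically to the full spiraling sector of opening $2\pi-2\eta$, substantially wider than the $\pi$-opening delivered by naive Borel summation. This is achieved by exploiting the geometry of the trajectories of \eqref{eq:HC-xvectfield}: as $\omega$ varies throughout the admissible range \eqref{eq:HC-omega}, every point of $\sX^\bullet(0)$ is reachable along a trajectory on which the exponential factor $e^{-\chi^{(0)}(h,0)/x}$ decays, providing contraction of the iteration. Uniformity of the asymptotic expansion in $u\in\sU$ follows from the fact that $\chi^{(0)}(h,0)=\lambda(0,0)+O(h)$ is nonvanishing, so the exponential damping is uniform in $h=v_1v_2$ for $\sU$ small enough.
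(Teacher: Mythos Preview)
Your route differs from the paper's, and the sectoral step is where the real content is missing.

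The paper does not build the formal $x$-series first and then Borel-sum it. It expands the normalizing transformation as a power series in the \emph{spatial} variable $u$, with coefficients $\{\Psi_\pm\}_{\mbf m}(x)$ that are functions of $x$; for each multi-index $\mbf m$ one obtains a first-order linear ODE in $x$, solved by an explicit integral along a real trajectory of \eqref{eq:HC-xvectfield} (formulas \eqref{eq:HC-e8}--\eqref{eq:HC-e9}). Uniform bounds on these integrals (Lemma~\ref{lemma:HC-lemmastep3}) feed Siegel's majorant technique (Lemma~\ref{lemma:HC-majorantlemma}) and give convergence of the $u$-series directly on the full wide sector $\sX^\bullet(0)$. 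Transverse symplecticity is obtained a~posteriori from the Liouville--Ostrogradskii formula, not by working with generating functions. The formal $x$-series and its Borel summability are then read off from this construction via part~(iii) of Theorem~\ref{theorem:HC-normalization}, rather than serving as the starting point.

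Your formal construction is plausible in outline but imprecise: order by order in $x$ the cohomological factor on the mode $v_1^av_2^b$ is $(a-b)\chi^{(0)}(h,0)$, not $(a-b)\chi(h,x,0)$ (dividing by the latter mixes $x$-orders and breaks the recursion), and for the components of the transformation itself the resonant indices are $a-b=\pm1$ rather than $a=b$. The substantive gap, however, is in the sectoral realization. Asserting Gevrey-$1$ growth and then invoking ``the unique analytic fixed point of an integral reformulation'' is not a proof: you have not exhibited the operator, named the Banach space, or established contraction. Your final paragraph correctly identifies the mechanism---varying the direction $\omega_\pm$ over \eqref{eq:HC-omega} so that every point of $\sX^\bullet(0)$ is reached by a trajectory along which the exponential weight decays---but this is exactly the technical heart of the matter and cannot be left as a sentence. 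In the paper this is carried out concretely: the integral estimate of Lemma~\ref{lemma:HC-lemmastep3} holds uniformly for all admissible $\omega_\pm$, which is what produces analyticity on the sector of opening $2\pi-2\eta$ rather than the $\pi$ that naive Borel--Laplace would give.
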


The Theorem~\ref{theorem:HC-normalization0} is originally due to Takano \cite{Ta90} for systems \eqref{eq:HC-system} whose formal invariant is of the form $\chi(h,x)=\lambda^{(0)}+x\chi^{(1)}(h)$.
In the case of the irregular singularity of the fifth Painlevé equation it was proved earlier by Takano \cite{Ta}.   
Some similar and closely related theorems are due to Shimomura \cite{Shi}, Yoshida \cite{Yo1}, and recently by Bittmann \cite{Bit16i,Bit16ii}, which apply to doubly resonant systems 
$x^2\frac{dy}{dx}=F(y,x)$, with $D_yF(0,0)=\lambda^{(0)}(0)\left(\begin{smallmatrix} 1&\\&-1\end{smallmatrix}\right)$
under a condition on positivity of $\tr\frac{\partial}{\partial x} D_yF(0,0)$. This condition is not satisfied for Hamiltonian systems \eqref{eq:HC-system} 
but nevertheless allows to treat Painlevé equations.

\begin{theorem}[Formal and sectoral normalization]\label{theorem:HC-normalization}
Let $Z_{H,\epsilon}(y,x)$ be a family of vector fields \eqref{eq:HC-vectorfield} and let $\chi(h,x,\epsilon)$ be their formal invariant.

\smallskip
\noindent
\textbf{(i)}
There exists a formal transversely sympectic change of coordinates 
$(y,x,\epsilon)=(\hat{\mbf\Psi}(u,x,\epsilon),x,\epsilon)$ 
written as a formal power series
\begin{equation}\label{eq:HC-hatPhi}
 \hat{\mbf\Psi}(u,x,\epsilon)=\psi^{(0)}(u,\epsilon)+x\psi^{(1)}(u,\epsilon)+x(x-\epsilon)\sum_{k,l\geq 0}\tilde\psi^{(kl)}(u)x^k\epsilon^l,
\end{equation}
with $\psi^{(0)}(u,\epsilon)$, $\psi^{(1)}(u,\epsilon)$ analytic in $(u,\epsilon)$, and $\tilde\psi^{(kl)}(u)$ analytic in $u$ on a fixed neighborhood 
$\sU=\{|u_1|,|u_2|<\delta_u\}$ of $0$,
which brings $Z_\epsilon$ to its formal normal form \eqref{eq:HC-vfnormalform}.
The coefficients $\tilde\psi^{(kl)}$ grow at most factorially in $k+l$:
$$\max_{u\in \sU}\|\tilde\psi^{(kl)}(u)\|\leq L^{k+l}(k+l)!\quad\text{for some}\quad L>0.$$

\smallskip
\noindent
\textbf{(ii)}
There exists a transversely symplectic fibered change of coordinates $(y,x,\epsilon)=(\mbf\Psi_\pm(u,x,\epsilon),x,\epsilon)$,
with $\mbf\Psi_\pm(u,0,\epsilon)=\psi^{(0)}(u,\epsilon)$ \eqref{eq:HC-hatPhi},
defined for $x$ in the spiraling domain $\sX_\pm(\epsilon)$, $\epsilon\in\sE_\pm$, of Definition~\ref{definition:HC-X} (for some $0<\eta<\tfrac{\pi}{2}$ arbitrarily small and some $\delta_x,\delta_\epsilon>0$ depending on $\eta$), and for $u\in \sU$,
which brings $Z_\epsilon$ to its formal normal form \eqref{eq:HC-vfnormalform}.
It is uniformly continuous on 
$$\sXE_\pm=\{(x,\epsilon)\mid x\in \sX_\pm(\epsilon)\}$$
and analytic on its interior.
When $\epsilon$ tends radially to $0$ with $\arg\epsilon=\beta$, then 
$\mbf\Psi_\pm(u,x,\epsilon)$ converges to $\mbf\Psi_\pm(u,x,0)$ uniformly on compact sets of the sub-domains 
$\lim_{\substack{\epsilon\to 0\\\arg\epsilon=\beta}}\sX_{\pm}(\epsilon)\subseteq \sX(0)$.
Note that in our notation $\mbf\Psi_\pm(u,x,0)$ consists of a pair of sectoral transformations $\mbf\Psi^{\uppie}(u,x,0)$ and $\mbf\Psi^{\downpie}(u,x,0)$; it is a functional cochain using the terminology of \cite{IlYa}.

\smallskip
\noindent
\textbf{(iii)}
Let $\Tilde\Psi(u,x,\epsilon)$ be an analytic extension of the function given by the convergent series
$$\tilde\Psi(u,x,\epsilon)=\sum_{k,l\geq 0} \frac{\tilde\psi^{(kl)}(u)}{(k+l)!}x^k\epsilon^l, \quad\text{$\tilde\psi^{(kl)}$ as in \eqref{eq:HC-hatPhi}}.$$ 
For each point $(x,\epsilon)$, for which there is  $\theta \in\,]\!-\!\frac{\pi}{2},\frac{\pi}{2}[$ such that
$\mbox{$\mathbf{S}_\theta\cdot(x,\epsilon)$}\subseteq \sXE_\pm$, with $\mathbf{S}_\theta\subset\C$ denoting the circle  through the points $0$ and $1$ with  center on $e^{i\theta}\R^+$,
we can express $\mbf\Psi_\pm(u,x,\epsilon)$ through the 
following Laplace transform of $\tilde\Psi$:
\begin{equation}\label{eq:HC-Phi}
\mbf\Psi_\pm(u,x,\epsilon)=\psi^{(0)}(u,\epsilon)+x\psi^{(1)}(u,\epsilon)+x(x-\epsilon)\int_0^{+\infty e^{i\theta}}\!\!\!\Tilde\Psi(u,sx,s\epsilon)\,e^{-s}\,ds.
\end{equation}
In particular, $\mbf\Psi_\pm(u,x,0)$ is the pair of sectoral Borel sums $\mbf\Psi^{\uppie}(u,x,0)$, $\mbf\Psi^{\downpie}(u,x,0)$ of the formal series $\hat{\mbf\Psi}(u,x,0)$.

As a consequence, $\mbf\Psi_\pm$ and $\hat{\mbf\Psi}$ satisfy the same $(\partial_u,\partial_x,\partial_\epsilon)$-differential relations with meromorphic coefficients.
\end{theorem}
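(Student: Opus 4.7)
\medskip
\noindent
\textbf{Proof plan.} The strategy is to build the formal transformation order by order to obtain (i), then realize it analytically via a Borel--Laplace summation procedure to obtain (ii) and (iii) simultaneously. By Theorem~\ref{theorem:HC-siegel} applied fibrewise in $(x,\epsilon)$ I can prenormalize the system to the form \eqref{eq:HC-prenormalform}; this accounts for the \emph{analytic} part $\psi^{(0)}(u,\epsilon)+x\psi^{(1)}(u,\epsilon)$ of \eqref{eq:HC-hatPhi}. Writing the remaining change as $u\mapsto u+x(x-\epsilon)\Tilde\Psi(u,x,\epsilon)$ and imposing $\Phi^*Z_{H,\epsilon}=Z_{G,\epsilon}$ via \eqref{eq:HC-changeofcoordinates} yields a nonlinear equation
$$Z_{G,\epsilon}\!\cdot\!\Tilde\Psi \;=\; R(u,x,\epsilon,\Tilde\Psi),$$
whose right hand side vanishes at $\Tilde\Psi\equiv 0$ to order $x(x-\epsilon)$. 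Expanding in $x^k\epsilon^l$, the homological operator at each step reduces, in the basis of transversely symplectic fields parametrized by their generating Hamiltonians on $\sU$, to division by a factor involving $\chi(u_1u_2,x,\epsilon)$, which is invertible since $\chi(0,0,0)=\lambda^{(0)}(0)\neq 0$. The coefficients $\tilde\psi^{(kl)}$ are therefore uniquely and analytically determined on a fixed polydisc $\sU$; a majorant bookkeeping argument of the kind used by Ramis--Sibuya and refined for parametric confluence by Lambert--Rousseau \cite{LR} then gives the bound $\|\tilde\psi^{(kl)}\|\leq L^{k+l}(k+l)!$, proving (i).

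\smallskip
This factorial bound means the series $\Tilde\Psi$ of part (iii) has positive radius of convergence in $(x,\epsilon)$ and defines an analytic germ at $(u,0,0)$. Substituting the Laplace ansatz \eqref{eq:HC-Phi} into the conjugation equation converts the latter into a convolution-type integro-differential equation for $\Tilde\Psi$, which I solve as a fixed-point problem on a family of star-shaped domains in $(sx,s\epsilon)$-space using Nagumo-type weighted norms. The analytic continuation of $\Tilde\Psi$ along a Laplace ray $e^{i\theta}\R^+$ is controlled by the real dynamics of the vector field \eqref{eq:HC-xvectfield}: for phases in the interval \eqref{eq:HC-omega} the rescaled trajectory stays inside $\sX_\pm(\epsilon)$ uniformly bounded away from both singularities $x=0,\epsilon$. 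Running the fixed point for each admissible $\theta$ and patching by Cauchy's theorem defines $\mbf\Psi_\pm$ on $\sXE_\pm$ through \eqref{eq:HC-Phi}, analytic on the interior and uniformly continuous up to the boundary by dominated convergence. By construction $\mbf\Psi_\pm$ conjugates $Z_{H,\epsilon}$ to $Z_{G,\epsilon}$ and its Gevrey-1 asymptotic at $x=0$ is $\hat{\mbf\Psi}$, which proves (iii); the final statement on $(\partial_u,\partial_x,\partial_\epsilon)$-differential relations follows because any identity satisfied by $\hat{\mbf\Psi}$ passes to its $1$-sum along the rays of the Laplace path.

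\smallskip
The limit $\epsilon\to 0$ in (ii) then comes for free: the integrand in \eqref{eq:HC-Phi} depends continuously on $\epsilon$ and the uniform Borel-plane estimates let one pass to the limit under the integral, recovering $\mbf\Psi_\pm(u,x,0)$ and the sectoral normalization of Theorem~\ref{theorem:HC-normalization0}. The main technical obstacle throughout is precisely the uniformity in $\epsilon\in\sE_\pm$ of the Borel-plane analysis: the admissible $\theta$-sector for Laplace integration shrinks and rotates with $\epsilon$, and the domain of analytic continuation of $\Tilde\Psi$ must cover all these rays with exponential-growth estimates independent of $\epsilon$. This uniformity is exactly what forces the spiraling geometry of $\sX_\pm(\epsilon)$ and the constraint \eqref{eq:HC-omega}, and is the technical heart of the proof; conceptually, it is the nonlinear analogue of the confluent Borel--Laplace analysis carried out in the linear case in \cite{LR}.
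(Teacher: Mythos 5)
Your scheme has two genuine gaps. First, at the formal level your key claim --- that the homological operator ``reduces to division by a factor involving $\chi(u_1u_2,x,\epsilon)$, which is invertible since $\chi(0,0,0)\neq 0$'' --- is false: the relevant operator is (up to lower-order shifts coming from $x(x-\epsilon)\partial_x$) the adjoint action of $\chi(h,\epsilon)\,(u_1\partial_{u_1}-u_2\partial_{u_2})$, whose kernel contains all resonant terms, i.e.\ the Hamiltonians $a(h)$ and the vector monomials $h^nu_j\partial_{u_j}$ (this kernel is precisely the infinitesimal torus, and it is the reason the normal form retains a whole function $\chi(h,x,\epsilon)$ rather than just $\lambda$). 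Non-vanishing of $\chi$ at the origin does not make the operator invertible, and your recursion stalls at the first resonant monomial. These resonant contributions have to be eliminated by a different mechanism, namely by integrating in $x$ against the $x(x-\epsilon)\partial_x$ term (this is what the paper does: the diagonal-resonant coefficients are obtained by plain integrals $\int_0^x$ in \eqref{eq:HC-ed}/\eqref{eq:HC-e8}, and the residual resonant function $\alpha_\pm$ is only identified with $\chi$ at the very end, in Step~3, via transverse symplecticity and the Liouville--Ostrogradskii formula). Relatedly, you never establish that your sectoral transformation is transversely symplectic, although that is part of the statement and is what pins down the formal invariant.

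Second, for the analytic part your construction of $\mbf\Psi_\pm$ on $\sXE_\pm$ ``through \eqref{eq:HC-Phi}'' cannot cover the domain: the Laplace representation is valid only at those $(x,\epsilon)$ for which some circle $\mathbf{S}_\theta\cdot(x,\epsilon)$ lies in $\sXE_\pm$, and on the spiraling ends of $\sX_\pm(\epsilon)$ no admissible $\theta$ exists; so part (ii) does not ``come for free'' from (iii), and patching Laplace rays by Cauchy's theorem does not produce the normalization on all of $\sXE_\pm$. Moreover the heart of your route --- that the nonlinear conjugacy equation becomes a convolution-type equation in the two-variable Borel plane solvable by a fixed point with bounds uniform in $\epsilon\in\sE_\pm$ --- is asserted rather than argued, and it is exactly the hard part. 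The paper deliberately avoids it: it expands the transformation in powers of the fiber variable $u$ with coefficients bounded and analytic on the ramified domain, obtained by integrating linear ODEs along real trajectories of \eqref{eq:HC-xvectfield} from the appropriate singular point, proves convergence in $u$ by Siegel's majorant method (Lemma~\ref{lemma:HC-majorantlemma} together with the estimate of Lemma~\ref{lemma:HC-lemmastep3}), and confines all Borel--Laplace/confluence analysis to the finite-dimensional problem of the ramified center manifold and the diagonalization of the linear part, quoting Theorem~\ref{proposition:HC-cm} from \cite{Kl2}; parts (i) and (iii) are then inherited from that theorem rather than from a Borel-plane analysis of the full conjugacy equation. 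To make your proposal work you would either have to supply the uniform nonlinear Borel-plane estimates (essentially redoing \cite{LR} in a nonlinear setting), or reduce the problem, as the paper does, to a situation where they are already available.
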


The proof will be given in Section \ref{sec:HC-proof}.

The transformations $\mbf\Psi_\pm$ and $\hat{\mbf\Psi}$ are unique up to left composition with an analytic symmetry of the model system, see Corollary~\ref{corollary:HC-21}.

\begin{corollary}
The system \eqref{eq:HC-system} possesses:
\begin{itemize}
 \item[(i)] a formal first integral given by $h\circ\hat{\mbf\Psi}^{\circ(-1)}(y,x,\epsilon)$, where $\hat{\mbf\Psi}$ as above and $h(u)=u_1u_2$
is a first integral of the model system,
\item[(ii)] an actual first integral given by $h\circ\mbf\Psi_\pm^{\circ(-1)}(y,x,\epsilon)$ that is bounded and analytic on the domain $\sXE_\pm$. 
\end{itemize}
\end{corollary}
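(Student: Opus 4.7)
The plan is to deduce both statements directly from Theorem~\ref{theorem:HC-normalization} together with the observation, already noted in Definition~\ref{definition:HC-normalform}, that the function $h(u)=u_1u_2$ is a first integral of the model vector field: $Z_{G,\epsilon}\cdot h=0$, which is immediate since $Z_{G,\epsilon}=x(x-\epsilon)\partial_x+\chi(h,x,\epsilon)(u_1\partial_{u_1}-u_2\partial_{u_2})$ and $u_1\partial_{u_1}(u_1u_2)-u_2\partial_{u_2}(u_1u_2)=0$.

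The key general fact I would invoke is the pullback identity from Definition~\ref{definition:HC-analequiv}: if $\mbf\Psi^*Z_{H,\epsilon}=Z_{G,\epsilon}$, then for any scalar function $f$ one has $Z_{H,\epsilon}\cdot(f\circ\mbf\Psi^{\circ(-1)})=\big(Z_{G,\epsilon}\cdot f\big)\circ\mbf\Psi^{\circ(-1)}$. Applied to $f=h$ this yields that $h\circ\mbf\Psi^{\circ(-1)}$ is annihilated by $Z_{H,\epsilon}$, i.e.\ a first integral of the system \eqref{eq:HC-system}. For (i) I would apply this with $\mbf\Psi=\hat{\mbf\Psi}$ of Theorem~\ref{theorem:HC-normalization}(i), obtaining a formal first integral as a formal series in $x,\epsilon$ with coefficients analytic in $y$ on a fixed neighborhood. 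The inverse $\hat{\mbf\Psi}^{\circ(-1)}$ is well defined as a formal power series because $\hat{\mbf\Psi}$ is transversely symplectic, so $\det D_u\hat{\mbf\Psi}\equiv 1$ and the formal inversion is possible term by term.

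For (ii) I would apply the same identity with $\mbf\Psi=\mbf\Psi_\pm$ of Theorem~\ref{theorem:HC-normalization}(ii). Here $\mbf\Psi_\pm$ is uniformly continuous on $\sXE_\pm$ and analytic on its interior, with $\det D_u\mbf\Psi_\pm\equiv 1$, so for each $(x,\epsilon)\in\sXE_\pm$ the fiber map $u\mapsto\mbf\Psi_\pm(u,x,\epsilon)$ admits a local inverse defined on the image of $\sU$; the composition $h\circ\mbf\Psi_\pm^{\circ(-1)}$ is therefore analytic on the interior and continuous on the closure of the corresponding set in $(y,x,\epsilon)$-space. Boundedness is automatic: the inverse takes values in $\sU=\{|u_i|<\delta_u\}$, hence $|h\circ\mbf\Psi_\pm^{\circ(-1)}|\le\delta_u^2$.

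The only point requiring care is the existence and regularity of $\mbf\Psi_\pm^{\circ(-1)}$ with the stated domain of definition; but this follows at once from the fact that $\mbf\Psi_\pm$ is fibered, transversely symplectic (unit Jacobian in $u$), and uniformly continuous up to the boundary of $\sXE_\pm$, together with a standard implicit-function-type argument carried out uniformly in $(x,\epsilon)$. I do not anticipate any genuine obstacle: the corollary is essentially a restatement of the normalization theorem via the trivial first integral $h=u_1u_2$ of the normal form.
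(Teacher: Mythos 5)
Your argument is correct and is essentially the paper's (implicit) one: the corollary is stated without separate proof precisely because it follows at once from Theorem~\ref{theorem:HC-normalization} by transporting the first integral $h=u_1u_2$ of the model vector field through the (formal, resp.\ sectoral) normalizing conjugacy, exactly as you do. Your additional remarks on invertibility via $\det D_u\mbf\Psi_\pm\equiv 1$ and on boundedness via $|h|\le\delta_u^2$ on $\sU$ are the right justifications and raise no issues.
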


\begin{definition}
 The solution $y=\mbf\Psi_\pm(0,x,\epsilon)$ is called \emph{ramified center manifold}. It is the unique solution that is bounded on $\sX_\pm(\epsilon)$ (cf. \cite{Kl2}).
\end{definition}

\begin{remark}
In the variable $x=\epsilon z$, the system \eqref{eq:HC-systemshort} takes the form of a singularly perturbed system
\begin{equation*}
\epsilon z(z-1)\tfrac{dy}{dz}=J\transp{(D_yH)(y,\epsilon z,\epsilon)}.
\end{equation*}
The domains $z\in\frac{1}{\epsilon}\sX_\pm(\epsilon)$, $\epsilon\in\sE_\pm$, then correspond to the Stokes domains in the sense of exact WKB analysis \cite{KT},
where the Stokes curves would be the real separatrices of the point $z=\infty$ of the vector field \eqref{eq:HC-xvectfield} $e^{i(\omega_\pm+\arg\epsilon)}\frac{z(z-1)}{\lambda(0,0)}\partial_z$ with a fixed phase $\omega_\pm$ \eqref{eq:HC-omega}.
\end{remark}

\goodbreak

\section{Stokes operators and accumulation of monodromy}

We will define several operators acting as transversely symplectic fibered isotropies on the three following foliations given by three different vector fields: 
\begin{itemize}
\item Foliation in the $(u,x)$-space given by the model vector field $Z_{G,\epsilon}$ \eqref{eq:HC-vfnormalform}.
\item Foliation in the $(c,x)$-space, $c$ being the constant of initial condition in \eqref{eq:HC-u},
given by the rectified vector field $Z_{0,\epsilon}=x(x-\epsilon)\partial_x$. 
Note that a fibered isotropy of $Z_{0,\epsilon}$ is necessarily \emph{independent} of $x$; it acts on the $c$-space of \emph{initial conditions} only.
\item Foliation in the $(y,x)$-space given by the original vector field $Z_{H,\epsilon}$ \eqref{eq:HC-vectorfield}.
\end{itemize}

\subsection{Symmetries of the model system: exponential torus}

A \emph{vertical infinitesimal symplectic symmetry} (shortly \emph{infinitesimal symmetry}) of the normal form vector field $Z_{G,\epsilon}$ \eqref{eq:HC-vfnormalform}
is a germ of vector field $\xi$ in the $(u,x)$-space that preserves:
\begin{itemize}
\item[(i)] the $x$-coordinate:\quad $\Cal L_{\xi}x=\xi\cdot x=0$,
\item[(ii)] the symplectic form $\omega=du_1\wedge du_2$: \quad $\Cal L_{\xi}\omega=0$,
\item[(iii)] the vector field $Z_{G,\epsilon}$: \quad $\Cal L_{\xi}Z_{G,\epsilon}=[\xi,Z_{G,\epsilon}]=0.$
\end{itemize}

\begin{lemma}
 A vector field $\xi$ is an infinitesimal symmetry of $Z_{G,\epsilon}$ if and only if $\xi=X_{f,x,\epsilon}$ is a Hamiltonian vector field with respect to $\omega$ for a first integral
$f(u,x,\epsilon)$ of $Z_{G,\epsilon}$: 
$$Z_{G,\epsilon}\cdot f=0.$$ 
\end{lemma}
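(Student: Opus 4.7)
The strategy is the standard correspondence between symplectic vector fields and Hamiltonian vector fields, adapted to the fibered setting, combined with a direct computation of the bracket $[\xi,Z_{G,\epsilon}]$.

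First, conditions (i) and (ii) together are equivalent to $\xi$ being a vertical (fiber-tangent) vector field on each fiber $\sY_x$ such that $\iota_\xi \omega$ is closed on each fiber. Since the fiber domain $\sU$ is simply connected, the fiberwise Poincaré lemma yields a function $f(u,x,\epsilon)$, analytic in $u$ with parameters $(x,\epsilon)$, such that $\iota_\xi\omega|_{\sY_x}=df|_{\sY_x}$; unwinding with $\omega=du_1\wedge du_2$ gives $\xi=X_{f,x,\epsilon}=\partial_{u_2}f\,\partial_{u_1}-\partial_{u_1}f\,\partial_{u_2}$. The function $f$ is determined only up to adding an arbitrary function $g(x,\epsilon)$, which does not change $\xi$.

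Next, I would check condition (iii) by computing the bracket explicitly. Writing $Z_{G,\epsilon}=x(x-\epsilon)\partial_x+X_{G,x,\epsilon}$, one has
\[
[X_{f,x,\epsilon},\,x(x-\epsilon)\partial_x]=-x(x-\epsilon)\,X_{\partial_x f,x,\epsilon},\qquad [X_{f,x,\epsilon},X_{G,x,\epsilon}]=X_{\{f,G\},x,\epsilon},
\]
where $\{f,G\}=\partial_{u_2}f\,\partial_{u_1}G-\partial_{u_1}f\,\partial_{u_2}G$ is the fiberwise Poisson bracket (the second identity is the usual Jacobi identity for Hamiltonian vector fields, the first is a straightforward commutator calculation using that $X_f$ annihilates $x$ and $\epsilon$). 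Consequently,
\[
[\xi,Z_{G,\epsilon}]=X_{\,\{f,G\}-x(x-\epsilon)\partial_x f\,,\,x,\epsilon}.
\]
Since $X_h=0$ if and only if $h$ is independent of $u$, the symmetry condition $[\xi,Z_{G,\epsilon}]=0$ is equivalent to
\[
\{f,G\}-x(x-\epsilon)\partial_x f = c(x,\epsilon)
\]
for some function $c$. Observing that $Z_{G,\epsilon}\cdot f=x(x-\epsilon)\partial_x f+\{G,f\}=x(x-\epsilon)\partial_x f-\{f,G\}$, this is the same as $Z_{G,\epsilon}\cdot f=-c(x,\epsilon)$.

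Finally, I would eliminate the ``constant of integration'' $c(x,\epsilon)$ using the residual freedom $f\mapsto f+g(x,\epsilon)$. Since $X_{G,x,\epsilon}$ vanishes at $u=0$ (because $X_G=\chi(u_1u_2,x,\epsilon)(u_1\partial_{u_1}-u_2\partial_{u_2})$), evaluation at $u=0$ gives
\[
-c(x,\epsilon)=Z_{G,\epsilon}\cdot f\,\big|_{u=0}=x(x-\epsilon)\partial_x f(0,x,\epsilon).
\]
Hence $c$ vanishes at $x=0$ and $x=\epsilon$, and by choosing $g(x,\epsilon)$ so that $f(0,x,\epsilon)=0$ one normalises $c\equiv 0$, obtaining $Z_{G,\epsilon}\cdot f=0$ with $f$ still representing the same $\xi$. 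The converse direction is immediate by reversing Steps above: if $f$ is a first integral then $\xi=X_{f,x,\epsilon}$ satisfies (i) trivially, (ii) by general symplectic geometry, and (iii) by the bracket identity with $c=0$. The only slightly non-routine point is the normalisation in the last step, and it turns out to be automatic thanks to the vanishing of $X_{G,x,\epsilon}$ along the invariant axis $u=0$.
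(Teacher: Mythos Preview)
Your proof is correct and follows essentially the same approach as the paper's. The paper is more compressed---it states directly that condition (iii) amounts to $D_u(Z_{G,\epsilon}\cdot f)=0$ without writing out the bracket decomposition $[X_f,x(x-\epsilon)\partial_x]+[X_f,X_G]$---but the logic is identical, including the normalization step $f\mapsto f-f(0,x,\epsilon)$ to absorb the $u$-independent constant.
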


\begin{proof}
The conditions \textit{(i)} and \textit{(ii)} say that $\xi=a_1(u,x,\epsilon)\partial_{u_1}+a_2(u,x,\epsilon)\partial_{u_2}$ with 
$\frac{\partial a_1}{\partial u_1}+\frac{\partial a_2}{\partial u_2}=0$,
i.e. $a_1=\frac{\partial f}{\partial u_2}$, $a_2=-\frac{\partial f}{\partial u_1}$ for some $f$, and $\xi=\frac{\partial f}{\partial u_2}\partial_{u_1}-\frac{\partial f}{\partial u_1}\partial_{u_2}=X_f$.

The condition \textit{(iii)} says that
$$0=Z_{G,\epsilon}\cdot D_uf-X_{f,x,\epsilon}\cdot D_uG=D_u(Z_{G,\epsilon}\cdot f).$$
Up to a translation $f(u,x,\epsilon)\mapsto f(u,x,\epsilon)-f(0,x,\epsilon)$ which does not affect $X_{f,x,\epsilon}$, this condition is equivalent to
$Z_{G,\epsilon}\cdot f=0$.
\end{proof}

The vector field $Z_{G,\epsilon}$ has the following obvious first integrals (cf. \eqref{eq:HC-u}):
\begin{equation}\label{eq:HC-c}
 c_1(u,x,\epsilon)=u_1\cdot E_\chi(h,x,\epsilon)^{-1},\qquad c_2(u,x,\epsilon)=u_2\cdot E_\chi(h,x,\epsilon),
\end{equation}
and
\begin{equation*}
 h(u)=u_1u_2=c_1c_2,
\end{equation*}
where $E_\chi(h,x,\epsilon)$ is as in \eqref{eq:HC-E}.
Clearly, any function of $c=(c_1,c_2)$
is again a first integral, and since $c$ defines local coordinates on the space of leaves (space of initial conditions), the converse is also true. 
Note that the map $c:u\mapsto c(u,x,\epsilon)$ conjugates the vector field $Z_{G,\epsilon}$ to the ``rectified'' vector field $Z_{0,\epsilon}=x(x-\epsilon)\partial_x$ in the $(c,x)$-space:
\begin{equation*}
 Z_{G,\epsilon}=c^*(x(x-\epsilon)\partial_x).
\end{equation*}
It turns out that analytic first integrals are functions of $h=c_1c_2$ only.

\begin{proposition}\label{proposition:HC-firstintegrals}
If $f(u,x,\epsilon)$ is an analytic (resp. meromorphic) first integral of $Z_{G,\epsilon}$ on some neighborhood $\sU\times\sX\times\sE$ of 0, then $f=F(u_1u_2,\epsilon)$ with $F$ analytic (resp. meromorphic).   
\end{proposition}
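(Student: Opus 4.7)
The strategy is to exploit the $\C^*$-action $(u_1,u_2)\mapsto(tu_1,t^{-1}u_2)$, generated by the Euler vector field $E=u_1\partial_{u_1}-u_2\partial_{u_2}$, which commutes with $Z_{G,\epsilon}$ because $\chi(h,x,\epsilon)$ depends on $u$ only through the weight-zero invariant $h=u_1u_2$. I would decompose $f$ by weights as $f=\sum_{j\in\Z}f_j$, where $f_j(tu_1,t^{-1}u_2,x,\epsilon)=t^j f_j(u,x,\epsilon)$. In the analytic case this is just a regrouping of the Taylor series at $u=0$: collecting the monomials $u_1^m u_2^n$ with $m-n=j$ yields
$$f_j = u_1^{\max(j,0)}\, u_2^{\max(-j,0)}\,\phi_j(h,x,\epsilon),$$
with $\phi_j$ analytic in $(h,x,\epsilon)$ near the origin. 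In the meromorphic case the same decomposition is extracted by the Cauchy integral $f_j=\tfrac{1}{2\pi i}\oint f(tu_1,t^{-1}u_2,x,\epsilon)\,t^{-j-1}dt$ on any loop in the annulus $\{t:(tu_1,t^{-1}u_2)\in\sU\}$ that avoids the polar divisor of $f$, followed by analytic continuation in $u$.

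Since $[E,Z_{G,\epsilon}]=0$, the equation $Z_{G,\epsilon}f=0$ splits into the weighted family
$$x(x-\epsilon)\,\partial_x\phi_j(h,x,\epsilon)+j\,\chi(h,x,\epsilon)\,\phi_j(h,x,\epsilon)=0,\qquad j\in\Z.$$
For $j=0$ this reduces to $\partial_x\phi_0\equiv 0$ on $\sX\setminus\{0,\epsilon\}$, so $\phi_0=F(h,\epsilon)$ is independent of $x$, yielding directly the desired conclusion for this component. For $j\neq 0$, the identity $x(x-\epsilon)\partial_x E_\chi=\chi E_\chi$ (cf.~\eqref{eq:HC-E}) gives the general solution
$$\phi_j(h,x,\epsilon)=C_j(h,\epsilon)\,E_\chi(h,x,\epsilon)^{-j},$$
and the proposition will follow once I show $C_j\equiv 0$ for every $j\neq 0$.

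This last step is the crux and uses the monodromy. For $\epsilon\neq 0$, analytic continuation of $E_\chi^{-j}$ along a small loop around $x=0$ lying in $\sX$ multiplies it by $e^{2\pi i j\chi^{(0)}(h,\epsilon)/\epsilon}$, whereas $\phi_j$ is single-valued in $x$ by construction; hence
$$C_j(h,\epsilon)\,\bigl(e^{2\pi i j\chi^{(0)}(h,\epsilon)/\epsilon}-1\bigr)\equiv 0.$$
Because $\chi^{(0)}(0,0)=\lambda^{(0)}(0)\neq 0$, the quantity $j\chi^{(0)}(h,\epsilon)/\epsilon$ is unbounded as $\epsilon\to 0$ and its integer level sets form a countable union of analytic hypersurfaces in $(h,\epsilon)$-space, hence a nowhere dense subset. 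Therefore $C_j$ vanishes on an open dense set of its domain and, by the identity principle for analytic (resp.\ meromorphic) functions, identically. Consequently $f_j\equiv 0$ for $j\neq 0$, and $f=f_0=F(u_1u_2,\epsilon)$, as claimed.

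The main technical obstacle I anticipate is justifying the weight decomposition cleanly in the meromorphic setting and checking that each component $\phi_j$ is itself single-valued in $x$ on $\sX\setminus\{0,\epsilon\}$, which is what legitimizes the monodromy argument; both facts are handled by the Cauchy integral representation of $f_j$ once the contour in $t$ is chosen off the polar divisor of $f$.
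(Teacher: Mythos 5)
Your treatment of the analytic case is correct, and it takes a genuinely different route from the paper's. The paper uses the coarse decomposition $f=f_0(h,x,\epsilon)+u_1f_1(u_1,h,x,\epsilon)+u_2f_2(u_2,h,x,\epsilon)$, observes that any first integral is a function of the leaf coordinates $c_1,c_2$, and kills $f_1,f_2$ by a growth argument: writing $u_i=c_i\,E_\chi^{-(-1)^i}$ and letting $x\to x_{i,\pm}$ inside the sectoral domains, boundedness of $f$ forces those components to vanish. You instead refine to the full weight decomposition under the torus action, solve the decoupled equations $x(x-\epsilon)\partial_x\phi_j+j\chi\phi_j=0$ explicitly as $C_j\,E_\chi^{-j}$, and eliminate $j\neq 0$ by confronting single-valuedness of $\phi_j$ in $x$ with the nontrivial monodromy $e^{2\pi i j\chi^{(0)}(h,\epsilon)/\epsilon}$ of $E_\chi^{-j}$ for non-resonant $(h,\epsilon)$ with $\epsilon\neq0$, finishing by density and continuity. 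This is sound; your "open dense" should just read "dense" (the complement of a countable union of closed nowhere dense sets need not be open), but density of the non-resonant set plus continuity of $\phi_j$ is all you use. Equivalently, one can skip monodromy altogether and feed the $x$-power series of $\phi_j$ into the equation: the recursion $(j\chi^{(0)}-k\epsilon)a_k=-\bigl((k-1)+j\chi^{(1)}\bigr)a_{k-1}$ annihilates all coefficients at non-resonant $\epsilon$ — the same resonance phenomenon. What the paper's route buys is that it works uniformly in $\epsilon$, including $\epsilon=0$ and fixed resonant $\epsilon$, with no limiting argument; what yours buys is an explicit identification of where non-trivial first integrals could live (exactly on the resonance locus), which is the content of the Remark following the proposition.

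The genuine gap is in the meromorphic half of the statement. The weight decomposition you invoke simply does not exist for a meromorphic germ: the Cauchy integral $\tfrac{1}{2\pi i}\oint f(tu_1,t^{-1}u_2,x,\epsilon)\,t^{-j-1}dt$ depends on which poles of $t\mapsto f(tu_1,t^{-1}u_2,x,\epsilon)$ the contour encloses, so the candidate components $f_j$ jump by residues as the polar divisor crosses the contour when $u$ varies, and even where they are defined the bilateral series $\sum_j f_jt^j$ represents $f$ only in the annulus between consecutive poles. For $f=1/(u_1-u_2)$ there is no annulus, uniform in $u$ near $0$, avoiding the poles, and no identity $f=\sum_j f_j$ on a neighborhood of $u=0$; analytic continuation in $u$ cannot repair this, because the obstruction is the residue picked up at a pole crossing, not multivaluedness. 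So "choose the contour off the polar divisor" does not yield a decomposition on which your weighted equations can act. You need a reduction to the analytic case instead: the paper does this in one line by representing a meromorphic function as a quotient of analytic ones; alternatively, and closer to your method, work at a generic point off the polar set, where $f$ is analytic and, being a first integral, is a local analytic function of $(c_1,c_2,\epsilon)$; single-valuedness in $x$ then forces invariance under the iterates of $c\mapsto\bigl(e^{-2\pi i\chi^{(0)}(h,\epsilon)/\epsilon}c_1,\,e^{2\pi i\chi^{(0)}(h,\epsilon)/\epsilon}c_2\bigr)$, which for non-resonant $(h,\epsilon)$ are dense in the circles $\{c_1c_2=h,\ |c_1|=\mathrm{const}\}$, so $f$ depends only on $(h,\epsilon)$ near that point, and the identity $f=F(u_1u_2,\epsilon)$ with $F$ meromorphic then propagates by the identity theorem. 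Without some such reduction, the meromorphic claim remains unproved in your write-up.
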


\begin{proof}[Proof of Proposition~\ref{proposition:HC-firstintegrals}]
On one hand, $c_1(u,x,\epsilon)$, $c_2(u,x,\epsilon)$ are local coordinate on the space of leaves, hence any first integral is a function of them (depending on $\epsilon$).
On the other hand, any analytic germ $f(u,x,\epsilon)$ is uniquely decomposed as  $f=f_0(h,x,\epsilon)+u_1f_1(u_1,h,x,\epsilon)+u_2f_2(u_2,h,x,\epsilon)$, with $f_l$ analytic.
Writing $u_i=c_i(u,x,\epsilon)\cdot E_\chi^{-(-1)^i}(h,x,\epsilon)$ \eqref{eq:HC-E}, we see that for $f$ to be bounded when $x\to x_{i,\pm}$, we must have $f_i=0$, $i=1,2$.
Therefore $f=f_0$ which must then be independent of $x$.

A meromorphic function is a quotient of analytic ones.
\end{proof}

\begin{remark}
The statement remains true also if restricted to $\epsilon=0$, or a generic fixed $\epsilon$ (such that $\tfrac{\lambda^{(0)}(\epsilon)}{\epsilon}\notin\Z$).
\end{remark}

\begin{corollary}
The Lie algebra of analytic infinitesimal symmetries of $Z_{G,\epsilon}$  consists of Hamiltonian vector fields 
\begin{equation}\label{eq:HC-xi}
 \xi=a(u_1u_2,\epsilon)\big(u_1\partial_{u_1}-u_2\partial_{u_2}\big)=a(h,\epsilon)X_h,\qquad\textit{$a(h,\epsilon)$ analytic},
\end{equation}
and is commutative. It is also called the \emph{infinitesimal torus}.
\end{corollary}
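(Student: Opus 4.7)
The plan is to simply splice together the preceding Lemma (symmetries are Hamiltonian vector fields of first integrals) with Proposition~\ref{proposition:HC-firstintegrals} (analytic first integrals depend only on $h$ and $\epsilon$), and then verify commutativity by a direct bracket computation exploiting the fact that $h$ is itself a first integral.

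First, let $\xi$ be an analytic infinitesimal symmetry of $Z_{G,\epsilon}$ on $\sU\times\sX\times\sE$. By the Lemma, $\xi=X_{f,x,\epsilon}$ for some analytic first integral $f(u,x,\epsilon)$ of $Z_{G,\epsilon}$ (note that the Hamiltonian of an analytic symmetry is itself analytic, up to adding an $x$-dependent function which does not affect $X_{f,x,\epsilon}$). Proposition~\ref{proposition:HC-firstintegrals} then gives $f=F(u_1u_2,\epsilon)$ with $F$ analytic. A direct computation of the Hamiltonian vector field yields
\begin{equation*}
X_{f,x,\epsilon}=\tfrac{\partial F}{\partial u_2}\partial_{u_1}-\tfrac{\partial F}{\partial u_1}\partial_{u_2}
=F'(h,\epsilon)\bigl(u_1\partial_{u_1}-u_2\partial_{u_2}\bigr),
\end{equation*}
where $F'=\partial_h F$. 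Setting $a(h,\epsilon):=F'(h,\epsilon)$, which is analytic, gives the claimed form $\xi=a(h,\epsilon)X_h$. Conversely, every such vector field is clearly tangent to $\{h=\mathrm{const}\}$, preserves $\omega$, and commutes with $Z_{G,\epsilon}$ (since $Z_{G,\epsilon}$ itself is of the form $x(x-\epsilon)\partial_x+\chi(h,x,\epsilon)X_h$ and $X_h\cdot h=0$), so it is indeed an infinitesimal symmetry.

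For commutativity, take two such fields $\xi_i=a_i(h,\epsilon)X_h$, $i=1,2$. Using the Leibniz rule for the Lie bracket and the identity $X_h\cdot h=0$,
\begin{equation*}
[\xi_1,\xi_2]=a_1(X_h\cdot a_2)\,X_h-a_2(X_h\cdot a_1)\,X_h+a_1a_2\,[X_h,X_h]=0,
\end{equation*}
since $X_h\cdot a_i(h,\epsilon)=a_i'(h,\epsilon)\,(X_h\cdot h)=0$ and $[X_h,X_h]=0$. This proves the Lie algebra is abelian.

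There is no real obstacle here: the statement is essentially a packaging of the two preceding results, and the only point demanding a bit of care is the passage from ``$\xi=X_f$ with $f$ analytic in $(u,x,\epsilon)$'' to ``$f$ independent of $x$,'' which is exactly the content of Proposition~\ref{proposition:HC-firstintegrals} once one uses the freedom $f\mapsto f-f(0,x,\epsilon)$ noted in the proof of the Lemma to ensure analyticity of a Hamiltonian representative.
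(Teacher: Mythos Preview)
Your proof is correct and follows exactly the approach the paper intends: the corollary is stated without proof precisely because it is an immediate consequence of the preceding Lemma (infinitesimal symmetries are $X_f$ for first integrals $f$) and Proposition~\ref{proposition:HC-firstintegrals} (analytic first integrals are functions of $h$ and $\epsilon$ only), together with the obvious bracket computation for commutativity. Your write-up simply makes explicit what the paper leaves implicit.
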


The time-$1$ flow map of a vector field \eqref{eq:HC-xi} is given by
\begin{equation}\label{eq:HC-symmetry}
 u\mapsto \Cal T_a(u):=\Phi^1_{aX_h}(u)=
\left(\begin{smallmatrix} e^{a(h,\epsilon)} & 0 \\[3pt] 0 & e^{-a(h,\epsilon)}\end{smallmatrix}\right) u.
\end{equation}

\begin{definition}
A (transversely symplectic fibered)  \emph{isotropy} of the model vector field $Z_{G,\epsilon}$ is a germ of symplectic transformation
$(u,x,\epsilon)\mapsto(\phi(u,x,\epsilon),x,\epsilon)$ analytic in $u\in\sU$, such that $\phi^*Z_{G,\epsilon}=Z_{G,\epsilon}$.
An isotropy that is analytic in $x$ on a full neighborhood $\sX$ of both singularities will be called a \emph{symmetry}.
\end{definition}

\begin{definition}[Intersection sectors]\label{definition:HC-intersectionsectors}
For $\epsilon\in\sE_{\pm}\sminus\{0\}$ define the \emph{left and right intersection sectors}
$$\sX_{i,\pm}^\cap(\epsilon)=\{x\in\sX_{\pm}(\epsilon): x_{i,\pm}+e^{2\pi i}(x-x_{i,\pm})\in\sX_{\pm}(\epsilon)\}.$$
and for $\epsilon=0$ let $\sX_{i\pm}^\cap(0)$ be their limits.  
They are the domains of self-intersection of $\sX_\pm(\epsilon)$ attached to the points $x_{i,\pm}(\epsilon)$ \eqref{eq:HC-x12}.
\end{definition}

\begin{lemma}\label{lemma:HC-sectoralisotropy}
Let $\phi_{i,\pm}(u,x,\epsilon)$ be a sectoral isotropy of the normal form vector field $Z_{G,\epsilon}$, analytic and bounded for $x\in\sX_{i,\pm}^\cap(\epsilon)$, $u\in\sU$.
Then
\begin{equation}\label{eq:HC-sectoralisotropy}
c_i\circ \phi_{i,\pm}=c_i\cdot e^{f_i(h,c_i,\epsilon)},\quad\text{and}\quad  
c_j\circ \phi_{i,\pm}=c_j\cdot e^{f_j(h,\epsilon)}+g_j(h,c_i,\epsilon),
\end{equation}
for some analytic germs $f_i,f_j,g_j$.
\end{lemma}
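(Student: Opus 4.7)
The plan is to transfer $\phi_{i,\pm}$ to the first-integral coordinates $c=(c_1,c_2)$ of the model system and extract the structural form via Cauchy--Liouville estimates combined with the boundedness on the intersection sector.

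First I would verify $x$-independence: since $c_1,c_2$ are first integrals of $Z_{G,\epsilon}$ (by \eqref{eq:HC-c}) and $\phi_{i,\pm}$ preserves $Z_{G,\epsilon}$, the composites $\tilde c_k:=c_k\circ\phi_{i,\pm}$ are themselves first integrals, hence functions of $(c,\epsilon)$ alone. Thus $\tilde\phi(c,\epsilon)=(\tilde c_1,\tilde c_2)$ is well defined, analytic on the domain obtained by pushing $\sU\times\sX_{i,\pm}^\cap$ forward via $(u,x)\mapsto c(u,x,\epsilon)$, and area-preserving by the identity $du_1\wedge du_2=dc_1\wedge dc_2$ (at fixed $(x,\epsilon)$, following from $u_1u_2=c_1c_2$).

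Next I would use boundedness to get a vanishing on $\{c_i=0\}$ together with uniform growth bounds. Writing the $i$-th component as $u_i\circ\phi_{i,\pm}=\tilde c_i\,E_\chi(\tilde c_1\tilde c_2,x,\epsilon)^{(-1)^{i+1}}$ and observing that this factor blows up as $x\to x_{i,\pm}$ inside $\sX_{i,\pm}^\cap$, boundedness at $c_i=0$ forces $\tilde c_i(0,c_j,\epsilon)=0$, hence $\tilde c_i=c_i\,A(c_1,c_2,\epsilon)$ with $A$ analytic. Evaluating the boundedness of $\phi_{i,\pm}$ at the boundary of the domain of analyticity (where $|c_1c_2|\sim\delta_u^2$) also gives uniform estimates $|A|\le\text{const}$ and $|\tilde c_j|\le\text{const}\cdot|c_j|$. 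Since the slice $\{c_i=0\}$ is accessible via $u_i=0,\ u_j\in\sU,\ x\in\sX_{i,\pm}^\cap$ with $c_j=u_j E_\chi(0,x,\epsilon)^{(-1)^{j+1}}$ sweeping out $\C$, Liouville's theorem applied to the entire, linearly bounded function $c_j\mapsto\tilde c_j(0,c_j,\epsilon)$ yields that it is affine in $c_j$.

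The final step is a Cauchy estimate in $c_i$ around circles of radius $\sim\delta_u^2/|c_j|$ (inside the domain), which shows inductively that the $k$-th Taylor coefficient in $c_i$ of $A$ at $c_i=0$ is polynomial of degree $\le k$ in $c_j$, and that of $\tilde c_j$ is polynomial of degree $\le k+1$. For $A$, rewriting each monomial $c_i^{k}c_j^{m}$ with $m\le k$ as $c_i^{k-m}h^m$ expresses $A$ as an analytic function of $(c_i,h)$; combined with $A(0,0)\ne 0$ from invertibility of $\tilde\phi$ at the origin, this gives $A=e^{f_i(h,c_i,\epsilon)}$. For $\tilde c_j$ the analogous rewriting splits the Taylor series into a diagonal part (monomials $c_i^kc_j^{k+1}=c_j\,h^k$, summing to $c_j\,e^{f_j(h,\epsilon)}$) and a strictly subdiagonal part (analytic in $(h,c_i)$, giving $g_j(h,c_i,\epsilon)$). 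The main obstacle is producing the uniform Cauchy bounds on the unbounded domain $\{|c_1c_2|<\delta_u^2\}$; once these are in place, the rest is a combinatorial reorganization of the Taylor series.
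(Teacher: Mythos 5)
Your opening steps are sound and parallel the paper's: $c\circ\phi_{i,\pm}$ is a first integral, hence $x$-independent and analytic in $c$ near $0$, and boundedness of $\phi_{i,\pm}$ as $x\to x_{i,\pm}$ combined with the blow-up of $E_\chi^{-(-1)^i}$ forces $\tilde c_i:=c_i\circ\phi_{i,\pm}$ to vanish on $\{c_i=0\}$; the paper then finishes differently, by reading off directly which monomials $c_i^{n_i}c_j^{n_j}=u_i^{n_i}u_j^{n_j}E_\chi(h,x,\epsilon)^{(-1)^i(n_i-n_j)}$ can occur in $c_k\circ\phi_{i,\pm}$ compatibly with boundedness (giving $n_i\geq n_j+1$ for $k=i$ and $n_i\geq n_j-1$ for $k=j$), and invoking $\det D_c(c\circ\phi_{i,\pm})\restriction_{c=0}\neq 0$. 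Your route instead makes the leaf-space domain unbounded in $c_j$ and runs a Cauchy--Liouville argument on the entire coefficient functions; that is a legitimate alternative packaging of the same mechanism.

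However, there is a genuine gap precisely at the step you yourself flag as ``the main obstacle'': the uniform bounds $|A|\leq\mathrm{const}$ and $|\tilde c_j|\leq\mathrm{const}\cdot|c_j|$ on the pinched unbounded domain are asserted, not derived, and they do not follow naively from boundedness of $\phi_{i,\pm}$. Boundedness gives $|\tilde c_i|\leq\delta\,|E_\chi(\tilde h,x,\epsilon)^{(-1)^i}|$ with $\tilde h=h\circ\phi_{i,\pm}$, while $|c_i|=|u_i|\,|E_\chi(h,x,\epsilon)^{(-1)^i}|$; so bounding $|A|=|\tilde c_i|/|c_i|$ produces the factor $|E_\chi(\tilde h,x,\epsilon)/E_\chi(h,x,\epsilon)|^{(-1)^i}$ (besides $1/|u_i|$), and since $\tilde h\neq h$ in general this ratio is \emph{not} uniformly bounded as $x\to x_{i,\pm}$ (for $\epsilon=0$ it is exponentially large in $1/|x|$); moreover the admissibility constraint $u(c,x,\epsilon)\in\sU$ forces $x\to x_{i,\pm}$ exactly in the regime $|c_j|\to\infty$ that your Liouville argument needs. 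The exact linear bound for $\tilde c_j$ holds only on the slice $c_i=0$, where $\tilde h=h=0$. The scheme can be salvaged — use $|\tilde h|\leq\delta^2$, compare the exponents of $E_\chi$ at $\tilde h$ and $h$ to get bounds of the form $\mathrm{const}\cdot|c_j|^{k+\mu}$ with $\mu=O(\delta^2+\delta_u^2)<1$ after shrinking $\sU$ and the target radius, and then use that a polynomial degree is an integer to still conclude degree $\leq k$ (resp.\ $\leq k+1$) — but none of this is in your proposal, and it is exactly where the analytic content of the lemma sits. (Two minor points: on $\{u_i=0\}$ one has $c_j=u_jE_\chi(0,x,\epsilon)^{(-1)^{j}}$, not $(-1)^{j+1}$, which is what makes $c_j$ sweep out large values; and the symplecticity remark $du_1\wedge du_2=dc_1\wedge dc_2$ is not needed for this lemma — it enters only later, in the unimodularity condition of Proposition~\ref{proposition:HC-Stokesisotropies}.)
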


\begin{proof}
The isotropy $\phi_{i,\pm}(u,x,\epsilon)$ is analytic in $u$ on some neighborhood of $u=0$ and bounded when $x\to x_{i,\pm}$. 
In particular, the restriction of $c\circ \phi_{i,\pm}$ to any fiber $\{x=cst\neq 0,\epsilon\}$ is analytic in $u$, and therefore, since $c\circ\phi_{i,\pm}(c,\epsilon)$ is 
independent of $x$, it is an analytic function of $c$ on some neighborhood of $c=0$.

We have $c_k=u_k E_\chi(h,x,\epsilon)^{(-1)^k}$, $k=1,2$, with $E_\chi$ given by \eqref{eq:HC-E}, and 
$$\lim_{\substack{x\to x_{i,\pm}\\ x\in\sX_\pm(\epsilon)}} E_\chi(h,x,\epsilon)^{(-1)^i}=0.$$ 
Writing $\phi_{i,\pm}=(\phi_{1,i,\pm},\phi_{2,i,\pm})$, its $k$-th component is given by 
$$\phi_{k,i,\pm}=\big(c_k\circ \phi_{i,\pm}\big)\cdot\big(E^{-(-1)^k}\circ h\circ \phi S_{i,\pm}\big),$$
and we see that the expansion of $c_i\circ \phi_{i,\pm}$ in powers of $c$ can contain only terms
$c_i^{n_i}c_j^{n_j}=u_i^{n_i}u_j^{n_j}E_\chi(h,x,\epsilon)^{(-1)^i(n_i-n_j)}$ with $n_i\geq n_j+1$, 
while the expansion of $c_j\circ \phi_{i,\pm}$ in powers of $c$ can contain only terms
$c_i^{n_i}c_j^{n_j}=u_i^{n_i}u_j^{n_j}E_\chi(h,x,\epsilon)^{(-1)^i(n_i-n_j)}$ with $n_i\geq n_j-1$.
Since $\phi_{i,\pm}$must be invertible $\det D_c(c\circ \phi_{i,\pm})\!\restriction_{c=0}\neq 0$ from which it follows that $c\circ \phi_{i,\pm}$ is of the form \eqref{eq:HC-sectoralisotropy}.
\end{proof}

Note that the hypersurface $\{u_i=0\}=\{c_i=0\}$ consists of all leaves of $Z_{G,\epsilon}$ that are bounded when $x\to x_{i,\pm}$ inside $X_{\pm}(\epsilon)$, and $\phi_{i,\pm}$ must preserve it.

\begin{proposition}
An isotropy of the normal form vector field $Z_{G,\epsilon}$ that is bounded and analytic on $\sU\times\sX_\pm(\epsilon)$ is a symmetry. It is given by a time-1 flow of some vector field \eqref{eq:HC-xi}.
\end{proposition}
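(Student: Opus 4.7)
The plan is to exploit the global boundedness of $\phi$ on $\sX_\pm(\epsilon)$ by applying Lemma~\ref{lemma:HC-sectoralisotropy} at both singular points $x_{1,\pm}$ and $x_{2,\pm}$ simultaneously, and then to rigidify the two resulting sectoral expressions against each other.

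First, since $\phi^*Z_{G,\epsilon}=Z_{G,\epsilon}$, the composition $c\circ\phi$ is a first integral of $Z_{G,\epsilon}$ on $\sU\times\sX_\pm(\epsilon)$; as $(c_1,c_2)$ furnish coordinates on the leaf space, $c\circ\phi$ is independent of $x$ and takes the form $\Phi(c,\epsilon)=(\Phi_1(c,\epsilon),\Phi_2(c,\epsilon))$, analytic on a neighborhood of $c=0$. Because the intersection sectors $\sX^\cap_{i,\pm}(\epsilon)$ sit inside $\sX_\pm(\epsilon)$, Lemma~\ref{lemma:HC-sectoralisotropy} applied at $i=1$ gives
\[\Phi_1=c_1e^{f_1(h,c_1,\epsilon)},\qquad \Phi_2=c_2e^{f_2(h,\epsilon)}+g_2(h,c_1,\epsilon),\]
and applied at $i=2$ gives
\[\Phi_2=c_2e^{\tilde f_2(h,c_2,\epsilon)},\qquad \Phi_1=c_1e^{\tilde f_1(h,\epsilon)}+\tilde g_1(h,c_2,\epsilon).\]

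Next, I would compare the two expressions for $\Phi_1$ term-by-term as a power series in $(c_1,c_2)$, using $h=c_1c_2$. The monomials $c_1^pc_2^q$ occurring in $c_1e^{f_1(h,c_1)}$ satisfy $p\geq q+1$; those in $c_1e^{\tilde f_1(h)}$ satisfy the equality $p=q+1$; and those in $\tilde g_1(h,c_2)=\sum \tilde g_{mn}(\epsilon)c_1^mc_2^{m+n}$ satisfy $q\geq p$. As the ranges $\{p\geq q+1\}$ and $\{q\geq p\}$ are disjoint, equality of the two expansions forces $\tilde g_1\equiv 0$ together with the vanishing of all monomials of $c_1e^{f_1(h,c_1)}$ with $p>q+1$; this is precisely the condition that $f_1$ be independent of $c_1$, i.e.\ $f_1(h,c_1,\epsilon)=a(h,\epsilon)$. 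Hence $\Phi_1=c_1e^{a(h,\epsilon)}$, and the symmetric argument gives $\Phi_2=c_2e^{b(h,\epsilon)}$.

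Then I would invoke the transversely symplectic condition. The change of variables $(c_1,c_2)\mapsto(u_1,u_2)=(c_1E_\chi,c_2E_\chi^{-1})$ from \eqref{eq:HC-u} has unit Jacobian at each fixed $(x,\epsilon)$, so the fiberwise condition $\det D_u\phi\equiv 1$ is equivalent to $\det D_c\Phi\equiv 1$. A direct computation yields
\[\det D_c\Phi=e^{a+b}\bigl(1+h\,(a+b)'(h)\bigr),\]
and setting $s(h)=a(h,\epsilon)+b(h,\epsilon)$ this reduces to $(h\,e^{s})'=1$, i.e.\ $h\,e^s=h+C(\epsilon)$; analyticity at $h=0$ forces $C\equiv 0$, hence $s\equiv 0$ and $b=-a$.

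Finally, since $\Phi(c,\epsilon)=(c_1e^{a(h,\epsilon)},c_2e^{-a(h,\epsilon)})$ preserves $h=c_1c_2$, returning to the $u$-coordinates gives $\phi(u,x,\epsilon)=(u_1e^{a(h,\epsilon)},u_2e^{-a(h,\epsilon)})=\Cal T_a(u)$, which is analytic in $u$, independent of $x$, and is precisely the time-1 flow of $a(h,\epsilon)X_h$ \eqref{eq:HC-xi}; in particular $\phi$ extends to an analytic symmetry on all of $\sU\times\sX\times\sE$. The delicate point is the monomial comparison in step three: it is what converts the one-sided sectoral rigidity of Lemma~\ref{lemma:HC-sectoralisotropy} into the stronger statement that the only globally bounded isotropies are the exponential-torus elements; the remaining steps are then elementary.
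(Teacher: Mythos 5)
Your proof is correct and follows essentially the same route as the paper: boundedness on all of $\sX_\pm(\epsilon)$ lets you apply Lemma~\ref{lemma:HC-sectoralisotropy} at both intersection sectors, which pins $c\circ\phi$ down to the torus form, and transverse symplecticity then forces $f_2=-f_1$. Your explicit monomial comparison and the unit-Jacobian argument for passing between $\det D_u\phi$ and $\det D_c\Phi$ merely spell out steps the paper leaves implicit.
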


\begin{proof}
An isotropy $\phi(u,x,\epsilon)$ of the model system  bounded and analytic on $\sU\times\sXE_\pm$ is in particular
bounded and analytic on $\sU\times\sX_{i,\pm}^\cap(\epsilon)$ for each $\epsilon\in\sE_\pm$, and
therefore by Lemma~\ref{lemma:HC-sectoralisotropy},
it is such that
$$c\circ\phi=\transp{(c_1e^{f_1(h,\epsilon)},c_2e^{f_2(h,\epsilon)})},\quad\text{i.e.}\quad \phi(u,x,\epsilon)=\transp{(u_1e^{f_1(h,\epsilon)},u_2e^{f_2(h,\epsilon)})}$$
for some analytic germs $f_1,f_2$.
The transverse symplecticity condition is then rewritten as
$$\tfrac{d}{dh}(h\circ\phi)=\tfrac{d}{dh}\big(he^{f_1(h,\epsilon)+f_2(h,\epsilon)}\big)=1,$$
which implies that $e^{f_1(h,\epsilon)+f_2(h,\epsilon)}=1$, i.e.
$\phi(u)=\left(\begin{smallmatrix} e^{f_1(h,\epsilon)} & 0 \\[3pt] 0 & e^{-f_1(h,\epsilon)}\end{smallmatrix}\right) u.$
\end{proof}

\begin{corollary}
The Lie group of (transversely symplectic fibered) symmetries \eqref{eq:HC-symmetry} of $Z_{G,\epsilon}$ is commutative and connected. 
It is called the \emph{exponential torus}.
\end{corollary}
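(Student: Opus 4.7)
The plan is to leverage the preceding Proposition, which classifies every transversely symplectic fibered symmetry of $Z_{G,\epsilon}$ as being of the form $\Cal T_a$ given by \eqref{eq:HC-symmetry} for some analytic germ $a(h,\epsilon)$. Once this classification is in hand, both properties reduce to short algebraic verifications based on the fact that $h=u_1u_2$ is a first integral of $Z_{G,\epsilon}$ and is moreover preserved by every $\Cal T_a$.

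First I would establish commutativity. The key observation is that $\Cal T_b$ preserves $h$, since
$$h\circ \Cal T_b \;=\; (u_1 e^{b(h,\epsilon)})(u_2 e^{-b(h,\epsilon)}) \;=\; u_1u_2 \;=\; h.$$
Consequently, for any two analytic germs $a(h,\epsilon), b(h,\epsilon)$,
$$\Cal T_a\circ \Cal T_b(u)\;=\;\bigl(u_1\, e^{a(h,\epsilon)+b(h,\epsilon)},\; u_2\, e^{-a(h,\epsilon)-b(h,\epsilon)}\bigr)\;=\;\Cal T_{a+b}(u),$$
which is symmetric in $a$ and $b$. Thus composition in the symmetry group corresponds to addition of the germs, and the assignment $a\mapsto \Cal T_a$ is a group isomorphism from the additive (abelian) group of analytic germs $a(h,\epsilon)$ onto the symmetry group; in particular the latter is commutative.

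Connectedness then follows immediately from the same parametrization: given any symmetry $\Cal T_a$, the continuous path $t\mapsto \Cal T_{ta}$ for $t\in[0,1]$ joins the identity $\Cal T_0=\id$ to $\Cal T_a$ inside the symmetry group. Equivalently, the symmetry group is the image under the time-$1$ flow map of the commutative Lie algebra of infinitesimal symmetries \eqref{eq:HC-xi}, which is a vector space; as the exponential image of a connected abelian Lie algebra it is automatically a connected commutative Lie group. The main obstacle in proving the corollary has already been overcome in the preceding Proposition, where the boundedness-plus-analyticity hypothesis was used via Lemma~\ref{lemma:HC-sectoralisotropy} together with the transverse symplecticity constraint to force the form \eqref{eq:HC-symmetry}; after that step only the bookkeeping above remains.
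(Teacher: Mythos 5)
Your proposal is correct and follows exactly the route the paper intends: the corollary is an immediate consequence of the preceding Proposition classifying all bounded analytic isotropies as maps $\Cal T_a$ of the form \eqref{eq:HC-symmetry}, after which the identity $\Cal T_a\circ\Cal T_b=\Cal T_{a+b}$ (valid because each $\Cal T_b$ preserves $h=u_1u_2$) yields commutativity, and the path $t\mapsto\Cal T_{ta}$ yields connectedness. The paper leaves this bookkeeping implicit, so there is nothing to add.
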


A characterization of the Lie group of symmetries of a general system \eqref{eq:HC-system} will be given in Proposition~\ref{proposition:HC-symmetrygroup}.

\begin{corollary}\label{corollary:HC-21}
The normalizing transformations $\hat{\mbf\Psi}$ and $\mbf\Psi_\pm$ of Theorem~\ref{theorem:HC-normalization} are unique modulo composition with elements of the exponential torus (i.e. flow maps of infinitesimal symmetries analytic in $\epsilon$).
They are uniquely determined by the analytic germ $\psi^{(0)}(u,\epsilon)=\mbf\Psi_+(u,0,\epsilon)=\mbf\Psi_-(u,0,\epsilon)$,
cf. \eqref{eq:HC-hatPhi}, \eqref{eq:HC-Phi}.
\end{corollary}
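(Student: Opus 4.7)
Given two normalizing pairs $(\hat{\mbf\Psi},\mbf\Psi_\pm)$ and $(\hat{\mbf\Psi}',\mbf\Psi'_\pm)$, the plan is to consider
\[
\phi_\pm \,:=\, \mbf\Psi_\pm^{\circ(-1)}\circ\mbf\Psi'_\pm,\qquad \hat\phi \,:=\, \hat{\mbf\Psi}^{\circ(-1)}\circ\hat{\mbf\Psi}',
\]
as a sectoral (resp. formal) transversely symplectic fibered isotropy of the model vector field $Z_{G,\epsilon}$, and to apply the Proposition immediately preceding this corollary to identify them with a single common element of the exponential torus. Once this is done, the refinement "uniquely determined by $\psi^{(0)}$'' will be an immediate algebraic consequence.

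\textbf{Main step.} Since $\mbf\Psi_\pm$ and $\mbf\Psi'_\pm$ are both transversely symplectic, fibered, and analytic-bounded on $\sU\times\sX_\pm(\epsilon)$ conjugating $Z_{H,\epsilon}$ to the same $Z_{G,\epsilon}$, the composition $\phi_\pm$ inherits all of these properties and is an isotropy of $Z_{G,\epsilon}$. The preceding Proposition then yields $\phi_\pm(u,x,\epsilon) = \Cal T_{a_\pm(h,\epsilon)}(u)$ for analytic germs $a_\pm$ on $\sU\times\sE_\pm$, with no $x$-dependence. Evaluating the identity $\phi_\pm = \Cal T_{a_\pm}$ at $x=0$ and using Theorem~\ref{theorem:HC-normalization}(ii), which gives the continuous boundary value $\mbf\Psi_\pm(u,0,\epsilon)=\psi^{(0)}(u,\epsilon)$, one obtains
\[
\Cal T_{a_\pm(h,\epsilon)}(u) \;=\; (\psi^{(0)})^{\circ(-1)}\!\big(\psi'^{(0)}(u,\epsilon),\,\epsilon\big).
\]
The right-hand side is analytic in $(u,\epsilon)$ on a full neighborhood of the origin by Theorem~\ref{theorem:HC-normalization}(i), since $\psi^{(0)}$ is a local diffeomorphism in $u$ at $0$ (its linear part being the Siegel--Birkhoff eigenvector matrix of $y\mapsto H(y,0,\epsilon)$). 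Consequently $a_+$ and $a_-$ glue into a single analytic germ $a(h,\epsilon)$ on $\sU\times\{|\epsilon|<\delta_\epsilon\}$, and $\Cal T_a$ is a bona-fide element of the exponential torus. Via the Laplace representation of Theorem~\ref{theorem:HC-normalization}(iii), the same holds at the formal level: $\hat\phi = \Cal T_a$, i.e. $\hat{\mbf\Psi}' = \hat{\mbf\Psi}\circ\Cal T_a$. This proves the first claim.

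\textbf{Uniqueness from $\psi^{(0)}$ and the main obstacle.} For the second claim, if in addition $\psi^{(0)}=\psi'^{(0)}$ then the displayed identity reduces to $\Cal T_a=\mrm{id}$, which forces $a\equiv 0$ by invertibility of $\psi^{(0)}$ as a germ of diffeomorphism in $u$; hence $\mbf\Psi_\pm=\mbf\Psi'_\pm$ and $\hat{\mbf\Psi}=\hat{\mbf\Psi}'$. The delicate point in the proof — the only step that is not pure algebra — is the gluing of the two sectorial germs $a_+$ and $a_-$ into a single convergent germ on a full disc in $\epsilon$. This rests on the two analytic inputs already granted by Theorem~\ref{theorem:HC-normalization}: (a) continuity of $\mbf\Psi_\pm$ up to the common boundary slice $\{x=0\}$ of $\sX_\pm(\epsilon)$ as $\epsilon$ varies in $\sE_\pm$ (part (ii)), so that the $x=0$ evaluation makes sense and transfers the problem from sectors in $x$ to germs in $\epsilon$; and (b) the Borel–Laplace identity (iii), which lifts the sectorwise conclusion $\phi_\pm=\Cal T_{a_\pm}$ to the formal $\hat\phi = \Cal T_{\hat a}$ and guarantees that $\hat a$ coincides with the analytic $a$.
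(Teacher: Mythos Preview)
Your argument is correct and is precisely the intended one: the paper states this as a corollary without proof, and your reasoning---form the sectoral isotropy $\phi_\pm=\mbf\Psi_\pm^{\circ(-1)}\circ\mbf\Psi'_\pm$, apply the preceding Proposition to obtain $\phi_\pm=\Cal T_{a_\pm}$, then evaluate at $x=0$ via Theorem~\ref{theorem:HC-normalization}(ii) to identify $a_+=a_-=a$ as a germ analytic in $\epsilon$---is exactly the natural deduction from the Proposition. The same gluing-at-$x=0$ logic reappears verbatim in the paper's proof of Theorem~\ref{theorem:HC-classification}, confirming this is the author's intended mechanism.

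One small remark: your justification of the formal statement $\hat\phi=\Cal T_a$ via part~(iii) is fine, but it can be said more directly. Once $\mbf\Psi'_\pm=\mbf\Psi_\pm\circ\Cal T_a$ is established with $\Cal T_a$ analytic, the asymptotic expansion of the right side in $x$ is $\hat{\mbf\Psi}\circ\Cal T_a$ (since $\Cal T_a$ is its own asymptotic expansion), while that of the left side is $\hat{\mbf\Psi}'$; uniqueness of asymptotic expansions on a sector of opening $>\pi$ then gives $\hat{\mbf\Psi}'=\hat{\mbf\Psi}\circ\Cal T_a$. No separate formal analogue of the Proposition is needed.
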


\goodbreak

\subsection{Canonical general solutions}
The model system has a \emph{canonical general solution} $u(x,\epsilon;c)$ \eqref{eq:HC-u}, depending on an \emph{``initial condition'' parameter $c\in\C^2$}, uniquely determined by a choice of a branch of the function $E_\chi(h,x,\epsilon)$ \eqref{eq:HC-E}.
Correspondingly, $y(x,\epsilon;c)=\mbf\Psi_\pm(u(x,\epsilon;c),x,\epsilon)$ is a germ of general solution of the original system on $\sY\times\sX_\pm(\epsilon)$.
In order for this solution to have a continuous limit when $\epsilon\to 0$, one has to split the domain $\sX_\pm(\epsilon)$ in two parts, corresponding to the two parts of $\sX_\pm(0)$, by making a cut in between the singular points $x_{1,\pm}, x_{2,\pm}$ along a trajectory of \eqref{eq:HC-xvectfield} through the mid-point $\tfrac{\epsilon}{2}$ (see Figure~\ref{figure:HC-monodromy}).
Let us denote $\sX_\pm^{\uppie}(\epsilon)$ the upper and  $\sX_\pm^{\downpie}(\epsilon)$ the lower part (with respect to the oriented line $\lambda^{(0)}(0)\,\R$) of the cut domain 
$$ \sX_\pm(\epsilon)= \sX_\pm^{\uppie}(\epsilon)\cup \sX_\pm^{\downpie}(\epsilon).$$
The two parts of $\sX_\pm(\epsilon)$ intersect in the left and right intersection sectors $\sX_{i,\pm}^\cap(\epsilon)$ (Definition~\ref{definition:HC-intersectionsectors})
attached to $\{x_{1,\pm}$, $i=1,2$, and for $\epsilon\neq 0$ also in a central part along the cut. 


Now take two branches $E_\chi^{\uppie}(h,x,\epsilon)$ and $E_\chi^{\downpie}(h,x,\epsilon)$ of $E_\chi(h,x,\epsilon)$ on the two parts of the domain, that agree on the right 
intersection sector $\sX_{2,\pm}^\cap$, and have a limit when $\epsilon\to 0$. 
Correspondingly they determine a pair of general solutions of the model system
$$u^{\bullet}(x,\epsilon;c),\qquad \bullet=\uppie,\downpie,$$ 
and a pair of \emph{canonical general solutions} of the original system
\begin{equation}\label{eq:HC-y}
 y_\pm^{\bullet}(x,\epsilon;c):=\mbf\Psi_\pm(u^{\bullet}(x,\epsilon;c),x,\epsilon),\qquad \bullet=\uppie,\downpie.
\end{equation}
Since the transformation $\mbf\Psi_\pm$ is unique only modulo right composition with an exponential torus action 
$\Cal T_a(u,\epsilon)$ \eqref{eq:HC-symmetry}, which acts on  $u^{\bullet}(x,\epsilon;c)$ as
$$\Cal T_a(\cdot,\epsilon)\circ u^{\bullet}(x,\epsilon;c)=u^{\bullet}(x,\epsilon;\cdot)\circ \Cal T_a(c,\epsilon),$$
the solutions $y_\pm^{\bullet}$ are determined only up to the same right action of $\Cal T_a(c,\epsilon)$.

\subsection{Formal monodromy}
The formal monodromy operators are induced by monodromy acting on the solutions $u^\bullet(x,\epsilon;c)$, $\bullet=\uppie,\downpie$, of the model system. 
For $\epsilon\neq 0$ the induced action of formal monodromies along simple counterclockwise loops around each singular point $x_{i,\pm}=0,\epsilon$ on the 3 foliations is given by:
\begin{itemize}
\item Monodromy operators of the model system   
  \begin{equation}\label{eq:HC-mscrNa}
  \mscr N_{x_{i,\pm}}(\cdot,x,\epsilon)\circ u(x,\epsilon;c)= u(e^{2\pi i}(x-x_{i,\pm})+x_{i,\pm},\epsilon;c),
  \end{equation}
  acting on the foliation of the normal form vector field $Z_{G,\epsilon}$ \emph{commutatively} by 
\begin{equation}\label{eq:HC-mscrN}
\begin{aligned}
 \mscr N_{0}:\ u&\mapsto \exp\big(-2\pi i\tfrac{\chi^{0}(h,\epsilon)}{\epsilon}
\left(\begin{smallmatrix} 1 & 0 \\ 0 & -1 \end{smallmatrix}\right)\big)\cdot u=\Cal T_{-\frac{2\pi i}{\epsilon}\chi\restriction_{x=0}}(u),\\
\mscr N_{\epsilon}:\ u&\mapsto \exp\big(2\pi i[\tfrac{\chi^{0}(h,\epsilon)}{\epsilon}+\chi^{(1)}(h,\epsilon)]
\left(\begin{smallmatrix} 1 & 0 \\ 0 & -1 \end{smallmatrix}\right)\big)\cdot u=\Cal T_{\frac{2\pi i}{\epsilon}\chi\restriction_{x=\epsilon}}(u).
\end{aligned}
\end{equation}
The total monodromy of the model system  is given by
  \begin{equation*}
  \mscr N=\mscr N_{0}\circ \mscr N_{\epsilon}=\mscr N_{\epsilon}\circ \mscr N_{0}:\
u\mapsto \exp\big(2\pi i\chi^{(1)}(u_1u_2,\epsilon)
\left(\begin{smallmatrix} 1 & 0 \\ 0 & -1 \end{smallmatrix}\right)\big)\cdot u.
  \end{equation*}

\item Formal monodromy operators 
\begin{equation*}
  N_{x_{i,\pm}}(\cdot,\epsilon)\circ c(u,x,\epsilon)=c(\cdot,x,\epsilon)\circ \mscr N_{i,\pm}(u,x,\epsilon),
  \end{equation*}
acting on the space of initial conditions $c$ \emph{commutatively} by
\begin{equation}\label{eq:HC-N}
\begin{aligned}
 N_{0}:\ c&\mapsto \exp\big(-2\pi i\tfrac{\chi^{0}(h,\epsilon)}{\epsilon}
\left(\begin{smallmatrix} 1 & 0 \\ 0 & -1 \end{smallmatrix}\right)\big)\cdot c=\Cal T_{-\frac{2\pi i}{\epsilon}\chi\restriction_{x=0}}(c),\\
N_{\epsilon}:\ c&\mapsto\exp\big(2\pi i[\tfrac{\chi^{0}(h,\epsilon)}{\epsilon}+\chi^{(1)}(h,\epsilon)]
\left(\begin{smallmatrix} 1 & 0 \\ 0 & -1 \end{smallmatrix}\right)\big)\cdot c=\Cal T_{\frac{2\pi i}{\epsilon}\chi\restriction_{x=\epsilon}}(c),
\end{aligned}
\end{equation}
and a formal total monodromy
  \begin{equation*}
 N=N_{0}\circ N_{\epsilon}= N_{\epsilon}\circ N_{0}:\
c\mapsto \exp\big(2\pi i\chi^{(1)}(h,\epsilon)
\left(\begin{smallmatrix} 1 & 0 \\ 0 & -1 \end{smallmatrix}\right)\big)\cdot c.
  \end{equation*}

\item Formal monodromy operators $\mfr N_{{i,\pm}}(y,x,\epsilon)$ acting on the foliation of the original vector field $Z_{H,\epsilon}$:
  \begin{equation}\label{eq:HC-mfrN}
  \mfr N_{i,\pm}(\cdot,x,\epsilon)\circ \mbf\Psi_{\pm}(u,x,\epsilon)=\mbf\Psi_{\pm}(\cdot,x,\epsilon)\circ\mscr N_{i,\pm}(u,x,\epsilon),
  \end{equation}
and
  \begin{equation*}
  \mfr N_{\pm}(\cdot,x,\epsilon)=\mfr N_{1,\pm}(\cdot,x,\epsilon)\circ\mfr N_{2,\pm}(\cdot,x,\epsilon)=\mfr N_{2,\pm}(\cdot,x,\epsilon)\circ\mfr N_{1,\pm}(\cdot,x,\epsilon).
  \end{equation*}
\end{itemize}

The canonical solutions $u_\pm^{\uppie}$, $u_\pm^{\downpie}$ of the model system on the domains $\sX_\pm^{\uppie}$, $\sX_\pm^{\uppie}$, are defined such that they agree 
on the right intersection sector $\sX_{2,\pm}^\cap$. Therefore on the left intersection sector they are connected by the total formal monodromy operator 
$$ u_\pm^{\uppie}(x,\epsilon;c)=\mscr N(\cdot,x,\epsilon)\circ u_\pm^{\downpie}(x,\epsilon;c)=u_\pm^{\downpie}(x,\epsilon;\cdot) \circ N(c,\epsilon),\qquad x\in\sX_{1,\pm}^\cap,$$
and by the formal monodromy $\mscr N_{x_{i,\pm}}$ on the central cut between the two domains for $\epsilon\neq 0$  
(cf. Figure~\ref{figure:HC-monodromy}).

\subsection{Stokes operators and sectoral isotropies}
Let $y=\mbf\Psi_{\pm}(u,x,\epsilon)$ be the normalizing transformation on $\sX_\pm(\epsilon)$.
We call \emph{Stokes operators} the operators that change the determination of $\mbf\Psi_\pm$ over the left or right intersection sectors.
If $x\in\sX_{i,\pm}^\cap(\epsilon)$, then for $\epsilon\neq 0$ we denote
$$\bar x= e^{2\pi i}(x-x_{i,\pm})+x_{i,\pm}$$
the corresponding point in $\sX_{\pm}(\epsilon)$ on the other sheet, and extend this notation by limit to $\epsilon=0$. 
Namely,
$$\begin{tabular}{ll}
if $x\in\sX_{1,\pm}^\cap(\epsilon)\subset\sX_{\pm}^{\downpie}(\epsilon)$, & then $\bar x\in\sX_{\pm}^{\uppie}(\epsilon)$,\\
if $x\in\sX_{2,\pm}^\cap(\epsilon)\subset\sX_{\pm}^{\uppie}(\epsilon)$, & then $\bar x\in\sX_{\pm}^{\downpie}(\epsilon)$.
\end{tabular}$$
Then the Stokes operators are the operators
\begin{equation}
 \mbf\Psi_{\pm}(u,x,\epsilon)\mapsto \mbf\Psi_\pm(u,\bar x,\epsilon),\qquad x\in\sX_{i,\pm}^\cap(\epsilon),
\end{equation}
which for $\epsilon=0$ are the Stokes operators in the usual sense that send the Borel sum of the formal $x$-series $\hat{\mbf\Psi}(u,x,0)$ in one non-singular direction to the Borel sum in a following non-singular direction.

To each of these Stokes operators we associate \emph{sectoral isotropies} of the 3 foliations. 
\begin{itemize}
\item Sectoral isotropies $\mscr S_{i,\pm}(u,x,\epsilon)$ of the normal form vector field $Z_{G,\epsilon}$:
  \begin{equation}\label{eq:HC-mscrS}
  \mbf\Psi_{\pm}(\cdot,x,\epsilon)\circ \mscr S_{i,\pm}(u,x,\epsilon) =\mbf\Psi_\pm(u,\bar x,\epsilon),\qquad x\in\sX_{i,\pm}^\cap(\epsilon).
  \end{equation}
  The pair $(\mscr S_{1,\pm},\mscr S_{2,\pm})$ is an analog of the Martinet-Ramis invariant of saddle-node singularity \cite{MR1,RT}.
\item Sectoral isotropies $S_{1,\pm}(c,\epsilon)$ and $S_{2,\pm}(c,\epsilon)$ of the rectified vector field $Z_{0,\epsilon}=x(x-\epsilon)\partial_x$ in the $c$-space:
\begin{equation}\label{eq:HC-S}
 \begin{aligned}
  u^{\downpie}(x,\epsilon;\cdot)\circ S_{1,\pm}(c,\epsilon)&=\mscr S_{1,\pm}(\cdot,x,\epsilon)\circ u^{\downpie}(x,\epsilon;c),\qquad 
  x\in\sX_{1,\pm}^\cap(\epsilon),\\
  u^{\uppie}(x,\epsilon;\cdot)\circ S_{2,\pm}(c,\epsilon)&=\mscr S_{2,\pm}(\cdot,x,\epsilon)\circ u^{\uppie}(x,\epsilon;c),\qquad 
  x\in\sX_{2,\pm}^\cap(\epsilon).
  \end{aligned}
\end{equation}
  
\item Sectoral isotropies $\mfr S_{i,\pm}(y,x,\epsilon)$ of the original vector field $Z_{H,\epsilon}$:
  \begin{equation}\label{eq:HC-mfrS}
  \mfr S_{i,\pm}(\cdot,x,\epsilon)\circ \mbf\Psi_{\pm}(u,x,\epsilon)=\mbf\Psi_\pm(u,\bar x,\epsilon),\qquad x\in\sX_{i,\pm}^\cap(\epsilon).
  \end{equation}
\end{itemize}


\begin{figure}[t]
\centering
\includegraphics[width=.9\textwidth]{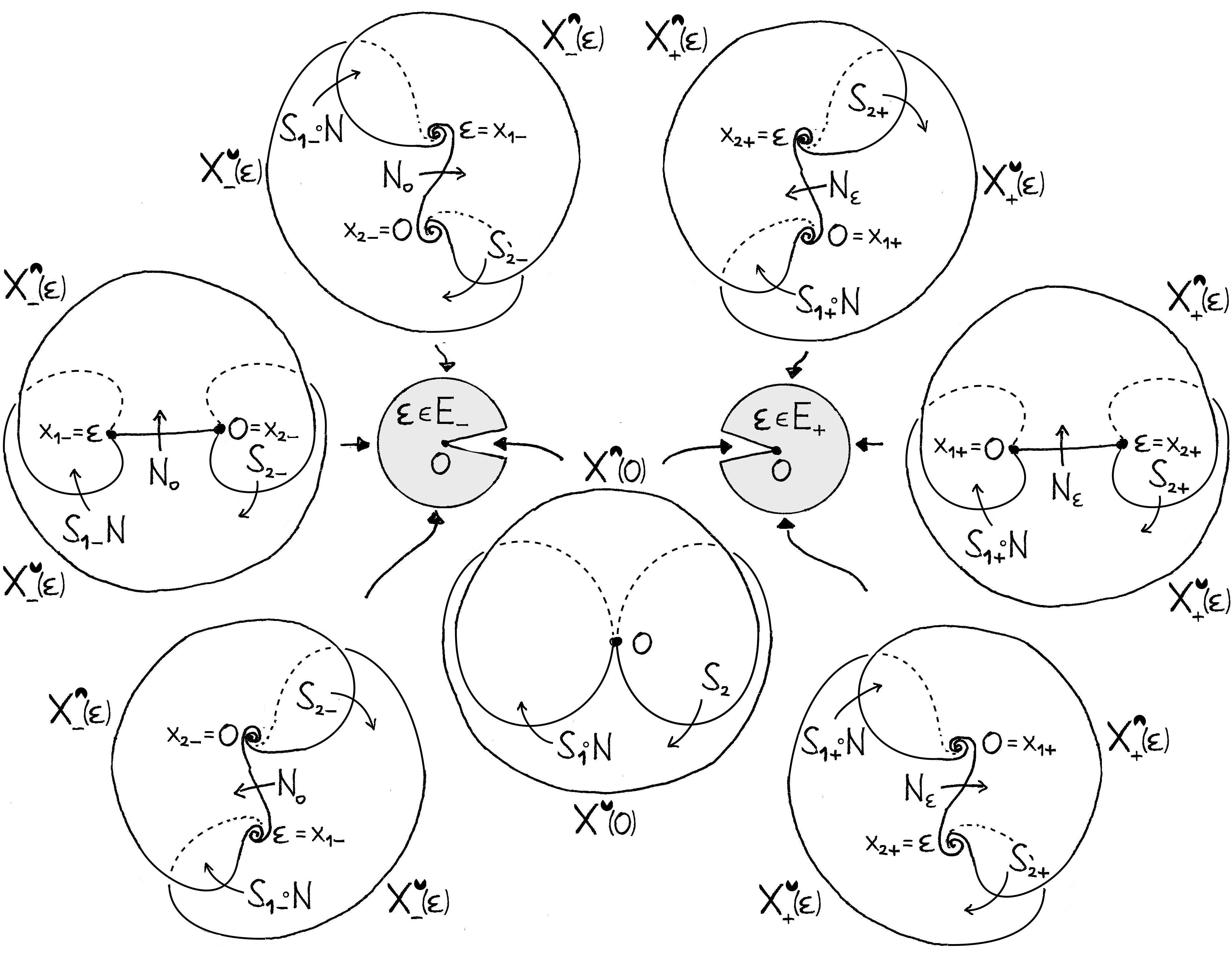}
\caption{Sectoral isotropies connecting the canonical general solutions $y_\pm^{\bullet}$ 
depending on $\epsilon\in\sE_\pm$.}
\label{figure:HC-monodromy}
\end{figure}

\begin{proposition}[Form of the Stokes isotropies]~\label{proposition:HC-Stokesisotropies}
Let $S_{i,\pm}(c,\epsilon)=\transp{(S_{1,i,\pm}(c,\epsilon),S_{2,i,\pm}(c,\epsilon))}$ be a Stokes sectoral isotropy \eqref{eq:HC-S}. Then
\begin{itemize}
 \item $S_{i,i,\pm}(c,\epsilon)=c_i+c_i^2\cdot\sigma_{i,i,\pm}(h,c_i,\epsilon)$ for an analytic germ $\sigma_{i,i,\pm}$, 
 \item $S_{j,i,\pm}(c,\epsilon)=c_j+\sigma_{j,i,\pm}(h,c_i,\epsilon)$ for an analytic germ $\sigma_{j,i,\pm}$, $j=3-i$,
\end{itemize}
subject to a condition $\det D_c(S_{i,\pm})=1$.
\end{proposition}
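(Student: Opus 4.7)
The plan is to start from the general form of bounded sectoral isotropies provided by Lemma~\ref{lemma:HC-sectoralisotropy} and then refine it using the characteristic asymptotic-identity of the Stokes sectoral isotropies. Since $\mscr S_{i,\pm}$ defined by \eqref{eq:HC-mscrS} is an isotropy of $Z_{G,\epsilon}$ analytic and bounded on $\sU \times \sX_{i,\pm}^\cap(\epsilon)$, Lemma~\ref{lemma:HC-sectoralisotropy} yields
$$S_{i,i,\pm}(c,\epsilon) = c_i\, e^{f_i(h,c_i,\epsilon)}, \qquad S_{j,i,\pm}(c,\epsilon) = c_j\, e^{f_j(h,\epsilon)} + g_j(h,c_i,\epsilon),$$
for analytic germs $f_i,f_j,g_j$, with $j=3-i$. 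The claim of the proposition is equivalent to the two refinements $f_i(h,0,\epsilon) \equiv 0$ and $f_j(h,\epsilon) \equiv 0$: the first gives $f_i = c_i\,\tilde f_i(h,c_i,\epsilon)$ analytically, hence $e^{f_i}-1 = c_i\,\sigma_{i,i,\pm}(h,c_i,\epsilon)$ and $S_{i,i,\pm} = c_i + c_i^2\,\sigma_{i,i,\pm}$; the second immediately gives $S_{j,i,\pm} = c_j + \sigma_{j,i,\pm}$ with $\sigma_{j,i,\pm}:=g_j$.

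The crucial additional input is that the Stokes isotropy is asymptotic to the identity: $\mscr S_{i,\pm}(u,x,\epsilon) \to u$ as $x \to x_{i,\pm}$ within $\sX_{i,\pm}^\cap(\epsilon)$, for every fixed $u \in \sU$. This follows from the representation $\mscr S_{i,\pm}(u,x,\epsilon) = \mbf\Psi_\pm^{-1}(\mbf\Psi_\pm(u,\bar x,\epsilon),x,\epsilon)$ together with Theorem~\ref{theorem:HC-normalization}, which guarantees that $\mbf\Psi_\pm$ extends continuously up to $x=x_{i,\pm}$ with a sheet-independent limit ($\psi^{(0)}(u,\epsilon)$ at $x=0$ and $\psi^{(0)}+\epsilon\psi^{(1)}$ at $x=\epsilon$), so that $\mbf\Psi_\pm(u,x,\epsilon)$ and $\mbf\Psi_\pm(u,\bar x,\epsilon)$ share the same limit. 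I would then translate this into the $c$-space by parametrising the approach $x \to x_{i,\pm}$ via $t:=c_i(u,x,\epsilon) \to 0$, which forces $c_j = h/t \to \infty$ with $h=u_1u_2$ fixed. A direct asymptotic expansion in $t$ of the two components of $\mscr S_{i,\pm}$ gives $h' := S_{1,i,\pm}\cdot S_{2,i,\pm} = h\,e^{f_i(h,0,\epsilon)+f_j(h,\epsilon)} + O(t)$, while a short estimate based on \eqref{eq:HC-E} shows $E_\chi(h',x,\epsilon)/E_\chi(h,x,\epsilon) \to 1$ along the approach (since $h'-h = O(t) = O(E_\chi^{-1})$ is exactly the order needed to compensate the growth of $\partial_h\log E_\chi$). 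The resulting limits
$$\lim_{x\to x_{i,\pm}} (\mscr S_{i,\pm})_i = u_i\, e^{f_i(h,0,\epsilon)}, \qquad \lim_{x\to x_{i,\pm}} (\mscr S_{i,\pm})_j = u_j\, e^{f_j(h,\epsilon)},$$
combined with the asymptotic-identity, force $f_i(h,0,\epsilon) \equiv 0$ and $f_j(h,\epsilon) \equiv 0$, as required.

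The symplectic condition $\det D_c S_{i,\pm} = 1$ is inherited from the transverse symplecticity of $\mbf\Psi_\pm$ (Theorem~\ref{theorem:HC-normalization}), together with the elementary observation that the coordinate change $u \leftrightarrow c$ at fixed $(x,\epsilon)$ preserves the 2-form, that is $du_1\wedge du_2 = dc_1 \wedge dc_2$ (a short computation using that $E_\chi$ depends on $u$ only through $h=u_1u_2=c_1c_2$). The principal technical obstacle is the asymptotic estimate $E_\chi(h',x,\epsilon)/E_\chi(h,x,\epsilon) \to 1$ used in the main step: one must confront the essential singularity of $E_\chi$ at $x_{i,\pm}$ when $\epsilon=0$ and its branched behaviour when $\epsilon \neq 0$, but in both cases the required cancellation follows from the matching between the decay order of $h'-h$ (of order $E_\chi^{-1}$) and the growth order of $\partial_h\log E_\chi$ read off from \eqref{eq:HC-E}.
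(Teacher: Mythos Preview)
Your approach is essentially the paper's: invoke Lemma~\ref{lemma:HC-sectoralisotropy}, use that $\mscr S_{i,\pm}(u,x,\epsilon)\to u$ as $x\to x_{i,\pm}$, and pass through the ratio $E_\chi(h',x,\epsilon)/E_\chi(h,x,\epsilon)\to 1$ to read off $f_i(h,0,\epsilon)=0$ and $f_j(h,\epsilon)=0$ from the component-wise limits. Your derivation of $\det D_c S_{i,\pm}=1$ via $du_1\wedge du_2=dc_1\wedge dc_2$ is correct (and in fact more explicit than the paper, which simply records the condition without argument).

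There is, however, one small circular step in your write-up. You assert $h'-h=O(t)$ in order to justify $E_\chi(h')/E_\chi(h)\to 1$, but from your own expansion $h'=h\,e^{f_i(h,0,\epsilon)+f_j(h,\epsilon)}+O(t)$ this $O(t)$ estimate holds only once $e^{f_i(h,0,\epsilon)+f_j(h,\epsilon)}=1$ is already known. The paper breaks this circle by first observing directly that $h\circ\mscr S_{i,\pm}\to h$ (immediate from $\mscr S_{i,\pm}\to\id$, since $h(u)=u_1u_2$ is continuous); comparing with your $t$-expansion then forces the leading term to equal $h$, hence $h'=h+c_i\cdot(\ldots)$, and only then does the $E_\chi$-ratio estimate become legitimate. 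Insert this observation before your ratio argument and the proof goes through.
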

The term $\sigma_{j,i,\pm}(0,0)$ is responsible for the ramification of the ramified center manifold $y=\mbf\Psi_\pm(0,x,\epsilon)$ of the original vector field $Z_\epsilon$ at the sector $\sX_{i,\pm}^\cap(\epsilon)$.

\begin{proof}
The isotropy $\mscr S_{i,\pm}(u,x,\epsilon)$ is analytic in $u$ on some neighborhood of $u=0$ and bounded in $x$ with $\lim_{x\to x_{i,\pm}} \mscr S_{i,\pm}(u,x,\epsilon)=u$. 
By Lemma~\ref{lemma:HC-sectoralisotropy}, 
$$c_i\circ \mscr S_{i,\pm}=c_i\cdot e^{f_i(h,c_i,\epsilon)},\quad\text{and}\quad  c_j\circ \mscr S_{i,\pm}=c_j\cdot e^{f_j(h,\epsilon)}+g_j(h,c_i,\epsilon)$$ where $f_i,f_j,g_j$ are some analytic functions of $(h,c_i,\epsilon)$.

Knowing that $\lim_{x\to x_{i,\pm}} h\circ\mscr S_{i,\pm}(u,x,\epsilon)=h$,
$$h\circ \mscr S_{i,\pm}=h+c_i\cdot(\ldots),$$ 
where $(\ldots)$ is an analytic function of $(h,c_i,\epsilon)$, which implies that
$\lim_{x\to x_{i,\pm}}E_\chi^{-1}\cdot (E_\chi\circ h\circ \mscr S_{i,\pm})=1$.
Writing $\mscr S_{i,\pm}=(\mscr S_{1,i,\pm},\mscr S_{2,i,\pm})$, its $k$-th component is 
$$\mscr S_{k,i,\pm}=\big(c_k\circ \mscr S_{i,\pm}\big)\cdot\big(E^{-(-1)^k}\circ h\circ \mscr S_{i,\pm}\big).$$
We conclude that
$f_i=c_i\cdot(\ldots)$ and $f_j=0$.
\end{proof}

\subsection{Symmetry group of the system}

\begin{proposition}\label{proposition:HC-symmetrygroup}
 The group of (analytic transversely symplectic fibered) symmetries of a system \eqref{eq:HC-system} is either
\begin{enumerate}
 \item isomorphic to the exponential torus: this happens if and only if the system is analytically  equivalent to the model \eqref{eq:HC-normalform}, or
\item isomorphic to a finite cyclic group.
\end{enumerate}
If the symmetry group is non-trivial, then the system has an analytic center manifold (bounded analytic solution on a neighborhood of both singular points).
\end{proposition}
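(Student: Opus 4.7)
The plan is to pull back an arbitrary symmetry $\phi$ of \eqref{eq:HC-system} to the model by $\mbf\Psi_\pm$, identify it with an element of the exponential torus via the preceding proposition, and then impose global single-valuedness by requiring commutation with every Stokes isotropy $\mscr S_{i,\pm}$. Concretely, $\tilde\phi_\pm:=\mbf\Psi_\pm^{\circ(-1)}\circ\phi\circ\mbf\Psi_\pm$ is a transversely symplectic isotropy of $Z_{G,\epsilon}$ bounded and analytic on $\sU\times\sX_\pm(\epsilon)$, hence equal to some $\Cal T_{a_\pm(h,\epsilon)}$. By Corollary~\ref{corollary:HC-21} and analyticity of $\phi$ at $\epsilon=0$, $a_+$ and $a_-$ fuse into a single analytic $a(h,\epsilon)$ on a neighborhood of $\epsilon=0$. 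Single-valuedness of $\phi$ on the ramified $\sX_\pm(\epsilon)$ means that transporting $\phi$ from $x$ to the other sheet $\bar x$ of $\sX_{i,\pm}^\cap(\epsilon)$ gives the same map; combined with \eqref{eq:HC-mscrS} this yields the commutation relations $\Cal T_a\circ\mscr S_{i,\pm}=\mscr S_{i,\pm}\circ\Cal T_a$ for $i=1,2$ and each sign, which are also sufficient (any such $\Cal T_a$ defines an actual symmetry via $\phi=\mbf\Psi_\pm\circ\Cal T_a\circ\mbf\Psi_\pm^{\circ(-1)}$). Thus the symmetry group is isomorphic to the simultaneous centralizer of all Stokes isotropies inside the exponential torus.

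Next I would split into the two cases of the dichotomy. If every $\mscr S_{i,\pm}$ is trivial, then this centralizer is the full torus; moreover \eqref{eq:HC-mscrS} now says $\mbf\Psi_\pm(u,\bar x,\epsilon)=\mbf\Psi_\pm(u,x,\epsilon)$ on each intersection sector, so $\mbf\Psi_\pm$ descends to a bounded single-valued analytic transformation on a full punctured neighborhood of $\{x=0,\epsilon\}$ and extends across the singular points by Riemann's theorem; agreement of $\mbf\Psi_+$ and $\mbf\Psi_-$ on $\sE_+\cap\sE_-$ follows from Theorem~\ref{theorem:HC-normalization}(iii), both being Borel sums of the same formal series $\hat{\mbf\Psi}$ with now path-independent summation. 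The system is then analytically equivalent to the model, and the converse implication is immediate. If instead some $\mscr S_{i,\pm}$ is non-trivial, writing $S_{j,i,\pm}=c_j+\sigma_{j,i,\pm}(h,c_i,\epsilon)$ with $j=3-i$ as in Proposition~\ref{proposition:HC-Stokesisotropies}, let $n_0\geq 0$ be the smallest order for which $\sigma_{j,i,\pm}=\sum_n\sigma_n(h,\epsilon)c_i^n$ has $\sigma_{n_0}\not\equiv 0$. Expanding $\Cal T_a\circ S_{i,\pm}=S_{i,\pm}\circ\Cal T_a$ in $c$-coordinates along $\{c_j=0\}$ and reading the coefficient of $c_i^{n_0}$ yields $\sigma_{n_0}(0,\epsilon)\bigl(1-e^{-(n_0+1)a(0,\epsilon)}\bigr)=0$, so by analyticity and discreteness of $2\pi i\Z$ one has $a(0,\epsilon)\equiv\tfrac{2\pi i k_0}{n_0+1}\pmod{2\pi i}$ with $k_0\in\Z$ constant; propagating the expansion to higher powers of $(h,c_i)$ constrains $a(h,\epsilon)$ to remain this same constant. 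The centralizer therefore embeds into $\Z/(n_0+1)\Z$ and is finite cyclic.

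For the analytic centre manifold claim, if the symmetry group is non-trivial I would pick $\phi\neq\id$ corresponding to a constant $a\not\in 2\pi i\Z$. Near $u=0$ the fixed locus of $\Cal T_a$ is $\{u_1=0\}\cup\{u_2=0\}$, intersecting along $\{u=0\}$. Because $\phi$ is analytic on a full neighborhood of $(0,0,0)$, the set $\{y=\phi(y,x,\epsilon)\}$ is an analytic subvariety, one of whose two-dimensional components is the image under $\mbf\Psi_\pm$ of $\{u=0\}$, namely the graph $y=\mbf\Psi_\pm(0,x,\epsilon)$; this graph therefore extends analytically across both singular points, yielding the bounded analytic centre manifold.

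The hardest step will be the trivial-Stokes case of the dichotomy: rigorously deducing convergence of $\hat{\mbf\Psi}$ and agreement of $\mbf\Psi_+$ with $\mbf\Psi_-$ on $\sE_+\cap\sE_-$ from triviality of all four Stokes isotropies requires carefully invoking the Borel summability of Theorem~\ref{theorem:HC-normalization}(iii) and verifying analytic continuation across the central cut between $x=0$ and $x=\epsilon$.
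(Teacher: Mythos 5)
Your main dichotomy argument is the same as the paper's: conjugate a symmetry by $\mbf\Psi_\pm$ into the exponential torus, observe that analyticity of the symmetry in $x$ is equivalent to commutation of $\Cal T_a$ with the Stokes isotropies, and conclude that either all $\sigma_{j,i,\pm}$ vanish (system analytically equivalent to the model, full torus --- the paper gets this from Theorem~\ref{theorem:HC-classification} rather than your direct Borel--sum gluing, but that is a legitimate variant) or $e^{ka}=1$ for some $k$, giving a finite cyclic group. One caveat there: you read the commutation relation only along $\{c_j=0\}$, i.e.\ at $h=0$, and only at the minimal $c_i$-order $n_0$; if $\sigma_{n_0}(h,\epsilon)\not\equiv 0$ but $\sigma_{n_0}(0,\epsilon)\equiv 0$ your displayed constraint is vacuous. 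The constraint has to be extracted from the lowest nonvanishing coefficient of the joint $(h,c_i)$-expansion (this is what the paper does: every nonzero coefficient of $c_i\sigma_{j,i,\pm}$ forces $e^{ka}=1$ for the corresponding power $k$ of $c_i$, whence $a$ is a constant in $\tfrac{2\pi i}{k}\Z$); your ``propagating to higher powers'' is the right idea but is not carried out.

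The genuine problem is the centre-manifold claim. First, the fixed locus of $\Cal T_a$ with $e^{a}\neq 1$ is $\{u_1=u_2=0\}$, not $\{u_1=0\}\cup\{u_2=0\}$: those hypersurfaces are invariant, not pointwise fixed. Second, and more seriously, saying that the graph $y=\mbf\Psi_\pm(0,x,\epsilon)$ is a component of the analytic set $\{y=\phi(y,x,\epsilon)\}$ and ``therefore extends analytically across both singular points'' begs the question: over a base point $x$ of an intersection sector this ``graph'' a priori consists of the two points $p_1=\mbf\Psi_\pm(0,x,\epsilon)$ and $p_2=\mbf\Psi_\pm(0,\bar x,\epsilon)$, and a component of an analytic variety containing such a bivalued set need not be the graph of an analytic function --- single-valuedness (unramifiedness of the centre manifold) is exactly what must be proved. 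Two ways to close the gap. (i) Use your own commutation relation at order $c_i^{0}$: the $c_j$-component of $\Cal T_a\circ S_{i,\pm}=S_{i,\pm}\circ\Cal T_a$ evaluated at $c=0$ gives $\sigma_{j,i,\pm}(0,0,\epsilon)=e^{\mp a}\sigma_{j,i,\pm}(0,0,\epsilon)$, hence $\sigma_{j,i,\pm}(0,0,\epsilon)=0$ once $e^{a}\neq 1$; by the remark following Proposition~\ref{proposition:HC-Stokesisotropies} this is precisely the vanishing of the ramification of the centre manifold, which is then single-valued and bounded, hence analytic at $x=0,\epsilon$ (the torus case is immediate, the model having the centre manifold $u=0$). (ii) Alternatively, repair the fixed-point argument: since the symmetry $\phi$ is single-valued in $x$, the point $p_2=\mbf\Psi_\pm\big(\mscr S_{i,\pm}(0,x,\epsilon),x,\epsilon\big)$ (cf.\ \eqref{eq:HC-mscrS}) is a fixed point of $\phi(\cdot,x,\epsilon)$ lying inside the chart $\mbf\Psi_\pm(\cdot,x,\epsilon)(\sU)$, in which the conjugated map $\Cal T_a$ has $u=0$ as its only fixed point; hence $p_1=p_2$ and the centre manifold is unramified. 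As written, neither step is in your text, so the last assertion of the proposition is not yet proved.
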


\begin{proof}
If $\Phi(y,x,\epsilon)$ is a symmetry of the system \eqref{eq:HC-system}, then 
$\Phi(\cdot,x,\epsilon)\circ\mbf\Psi_\pm(u,x,\epsilon)=\mbf\Psi_\pm(\cdot,x,\epsilon)\circ\phi(u,\epsilon)$ for some germ 
$$\phi:u\mapsto\left(\begin{smallmatrix} e^{a(h,\epsilon)} & \\ & e^{-a(h,\epsilon)}\end{smallmatrix}\right),$$
from the exponential torus,
and the analyticity of $\Phi$ means that this $\phi$ must commute with the Stokes operators $S_{i,\pm}$ \eqref{eq:HC-S} (note that $\phi$ acts the same way on $c$ as on $u$).
Using their characterization in Proposition~\ref{proposition:HC-Stokesisotropies}, this means that
$$\sigma_{i1,\pm}(h,c_1)=e^a\sigma_{i1,\pm}(h,e^a c_1),\qquad  \sigma_{i2,\pm}(h,c_2)=e^{-a}\sigma_{i2,\pm}(h,e^{-a} c_2),\quad i=1,2.$$
This can be satisfied only if
\begin{itemize}
 \item either $\sigma_{ij,\pm}(h,c_j)=0$ for all $i,j$, i.e. if $S_{1,\pm}=\id$, $S_{2,\pm}=\id$ and the system is analytically equivalent to its formal normal form,
\item or there is $k\in\N$ such that $c_j\sigma_{ij,\pm}(h,c_j)=0$ contains only powers of $c_j^k$ for all $i,j$, and $e^{ka}=1$, i.e. $a\in\frac{2\pi i}{k}\Z$.
\end{itemize}
\end{proof}

\subsection{Analytic classification}

\begin{definition}[Analytic invariants]
 The collection $(\chi,\{\mscr S_{1,+},\mscr S_{2,+},\mscr S_{1,-},\mscr S_{2,-}\})$ is called an \emph{analytic invariant} of a system \eqref{eq:HC-system}.
Two analytic invariants $(\chi,\{\mscr S_{i,\pm}\})$, $(\tilde\chi,\{\tilde{\mscr S}_{i,\pm}\})$ are equivalent if  
\begin{itemize}
 \item either $\chi=\tilde\chi$ \ and there is an element  $\phi(u,\epsilon)$ of the exponential torus, analytic in $\epsilon$, such that: 
$\ \mscr S_{i,\pm}=\phi\circ\tilde{\mscr S}_{i,\pm}\circ \mscr \phi^{\circ(-1)},\ i=1,2,$
\item or $\chi(h,x,\epsilon)=-\tilde\chi(-h,x,\epsilon)$ \ and there is an element  $\phi(u,\epsilon)$ of the exponential torus, analytic in $\epsilon$, such that: $\ \mscr S_{i,\pm}=J\phi\circ\tilde{\mscr S}_{j,\mp}\circ \mscr (J\phi)^{\circ(-1)},\ i=1,2,\ j=3-i,$
where $J:(u_1,u_2)\mapsto (u_2,-u_1)$.
Note that the definition of $\sE_\pm$, $\sX_{\pm}$ and $x_{i,\pm}$ depends on $\lambda(x,\epsilon)=\chi(0,x,\epsilon)$, therefore the relation $\tilde{\lambda}=-\lambda$ entails the renaming
$$\tilde\sE_\pm=\sE_\mp,\quad\tilde\sX_{\pm}^\bullet=\sX_\mp^\bullet,\quad \tilde x_{i,\pm}=x_{j,\mp}.$$
\end{itemize}
\end{definition}

By the construction, an \emph{analytic invariant} of a system \eqref{eq:HC-system} is uniquely defined up to the equivalence.

\begin{theorem}[Analytic classification]\label{theorem:HC-classification}
 Two systems \eqref{eq:HC-system} are analytically equivalent (in the sense of Definition~\ref{definition:HC-analequiv}) if and only if their analytic invariants are equivalent.
\end{theorem}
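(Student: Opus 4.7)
The forward direction is a consequence of the uniqueness of the normalizations. Given an analytic transversely symplectic equivalence $y = \Phi(\tilde y, x, \epsilon)$ of two systems \eqref{eq:HC-system}, Corollary~\ref{proposition:HC-chi} forces the formal invariants to coincide up to the involution $J^*$; after composing with $J$ if necessary we may assume $\chi = \tilde\chi$. Then $\Phi \circ \tilde{\mbf\Psi}_\pm$ is another sectoral normalization of the first system (with the same transverse-symplectic property and the same sectoral domain), so by Corollary~\ref{corollary:HC-21} there exists an element $\phi(u, \epsilon)$ of the exponential torus, analytic in $\epsilon$, such that $\Phi \circ \tilde{\mbf\Psi}_\pm = \mbf\Psi_\pm \circ \phi$. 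Plugging this into the defining equation \eqref{eq:HC-mscrS} of the Stokes operators for both systems on each intersection sector and simplifying immediately yields the required conjugacy $\mscr S_{i,\pm} = \phi \circ \tilde{\mscr S}_{i,\pm} \circ \phi^{\circ(-1)}$.

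For the converse, after applying $J$ if needed, I reduce to the case $\chi = \tilde\chi$ with $\mscr S_{i,\pm} = \phi \circ \tilde{\mscr S}_{i,\pm} \circ \phi^{\circ(-1)}$ for some exponential-torus element $\phi$ analytic in $\epsilon$. Since both $\mbf\Psi_\pm$ and $\tilde{\mbf\Psi}_\pm$ are sectoral normalizations on the same spiraling domain $\sXE_\pm$, set
$$\Phi_\pm := \mbf\Psi_\pm \circ \phi \circ \tilde{\mbf\Psi}_\pm^{\circ(-1)}.$$
This is a transversely symplectic fibered transformation on $\sU \times \sXE_\pm$ carrying $Z_{\tilde H, \epsilon}$ to $Z_{H, \epsilon}$ (the $*$-action composes correctly and $\phi^* Z_{G,\epsilon} = Z_{G,\epsilon}$); it descends a priori as a multi-valued function of $(y, x, \epsilon)$ over the base $x$-disc, ramified only around $\{x = 0\}$ and $\{x = \epsilon\}$.

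The key step is to show $\Phi_\pm$ is in fact single-valued over the base disc. On each intersection sector $\sX_{i,\pm}^\cap$, writing $\bar x$ for the shadow point on the other sheet and applying \eqref{eq:HC-mscrS} for both $\mbf\Psi_\pm$ and $\tilde{\mbf\Psi}_\pm$ gives
$$\Phi_\pm(y, \bar x, \epsilon) = \mbf\Psi_\pm(\cdot, x, \epsilon) \circ \bigl(\mscr S_{i,\pm} \circ \phi \circ \tilde{\mscr S}_{i,\pm}^{\circ(-1)}\bigr) \circ \tilde{\mbf\Psi}_\pm^{\circ(-1)}(y, x, \epsilon),$$
so the conjugacy $\phi \circ \tilde{\mscr S}_{i,\pm} = \mscr S_{i,\pm} \circ \phi$ is precisely what forces $\Phi_\pm(y,\bar x, \epsilon) = \Phi_\pm(y, x, \epsilon)$. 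Across the central cut (for $\epsilon \neq 0$) no separate argument is needed since $\mbf\Psi_\pm$ and $\tilde{\mbf\Psi}_\pm$ are each already analytic there. Hence $\Phi_\pm$ descends to a well-defined, bounded, transversely symplectic fibered transformation on $\sU \times (\sX \setminus \{0, \epsilon\}) \times \sE_\pm$, and by Riemann's removable singularity theorem in $x$ it extends analytically across the singular hypersurfaces. The germs $\Phi_+$ and $\Phi_-$ coincide on $\sE_+ \cap \sE_-$ by uniqueness of analytic continuation (e.g.\ from real $\epsilon$), producing a single analytic equivalence $\Phi(y, x, \epsilon)$ on a full neighborhood of the origin.

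The principal obstacle I anticipate lies in ensuring that the removable-singularity extension is uniform as $\epsilon \to 0$, so that $\Phi$ is genuinely analytic in the joint variables $(u, x, \epsilon)$ rather than merely leafwise. This uniformity should follow from the uniform continuity of $\mbf\Psi_\pm, \tilde{\mbf\Psi}_\pm$ on the closed spiraling domain $\sXE_\pm$ asserted in Theorem~\ref{theorem:HC-normalization}(ii), together with the compactness of $\phi$ on the Birkhoff disc $\sU$; nevertheless, the confluent limit requires careful bookkeeping of the dependence on the parameter.
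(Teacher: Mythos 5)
Most of your argument runs parallel to the paper's: the forward direction via Corollary~\ref{corollary:HC-21}, and in the converse the definition of $\Phi_\pm=\mbf\Psi_\pm\circ\phi\circ\tilde{\mbf\Psi}_\pm^{\circ(-1)}$, the single-valuedness in $x$ over the punctured disc obtained from the conjugacy of the Stokes isotropies on the self-intersection sectors, and the removable-singularity extension across $x=0,\epsilon$ are all correct and are essentially what the paper does (the paper merely absorbs $\phi$ into one of the normalizations so that the invariants become literally equal). Your closing worry about uniformity of the extension as $\epsilon\to 0$ is not the real difficulty.

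The genuine gap is the assertion that $\Phi_+$ and $\Phi_-$ coincide on $\sE_+\cap\sE_-$ ``by uniqueness of analytic continuation (e.g.\ from real $\epsilon$)''. This is not an argument: $\Phi_+$ and $\Phi_-$ are built from two different, genuinely sectoral-in-$\epsilon$ normalizations $\mbf\Psi_\pm$, $\tilde{\mbf\Psi}_\pm$, so there is no set on which they are a priori known to agree from which equality could be propagated; moreover $\sE_\pm$ are sectors bisected by the directions $\pm\lambda^{(0)}(0)$, so their intersection consists of two sectors near the directions $\pm i\lambda^{(0)}(0)$ and need not contain real $\epsilon$ at all (and even agreement on one component would say nothing about the other). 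This matching of the two $\epsilon$-sectors is precisely the nontrivial point of the converse, and the paper closes it differently: for $\epsilon\in\sE_+\cap\sE_-$ the composition $\Phi_+\circ\Phi_-^{\circ(-1)}$ is an analytic symmetry of the target system on a full $x$-neighborhood of both singular points; by conjugation with a normalization such a symmetry corresponds to an exponential torus element and is therefore determined by its restriction to $x=0$; since $\mbf\Psi_+\!\restriction_{x=0}=\mbf\Psi_-\!\restriction_{x=0}=\psi^{(0)}$ and $\tilde{\mbf\Psi}_+\!\restriction_{x=0}=\tilde{\mbf\Psi}_-\!\restriction_{x=0}=\tilde\psi^{(0)}$ (Corollary~\ref{corollary:HC-21}, formula \eqref{eq:HC-Phi}), one has $\Phi_+\!\restriction_{x=0}=\psi^{(0)}\circ\phi\circ(\tilde\psi^{(0)})^{\circ(-1)}=\Phi_-\!\restriction_{x=0}$, hence $\Phi_+\circ\Phi_-^{\circ(-1)}=\id$. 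Only then do the two pieces glue to a single transformation, analytic in $\epsilon$ on the punctured neighborhood and extendable across $\epsilon=0$ by boundedness. Without this symmetry argument (or an equivalent use of the uniqueness statement of Corollary~\ref{corollary:HC-21} in the parameter), your proof produces an equivalence only for $\epsilon$ in each sector $\sE_\pm$ separately, which is weaker than analytic equivalence in the sense of Definition~\ref{definition:HC-analequiv}.
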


\begin{proof}
If $y=\Phi(\tilde y,x,\epsilon)$ is an analytic transformation from one system to another, then the sectoral normalizations $y=\mbf\Psi_{\pm}(u,x,\epsilon)$ and 
$\tilde y=\tilde{\mbf\Psi}_{\pm}(u,x,\epsilon)=\Phi\circ\mbf\Psi_{\pm}$ provide the same analytic invariant. 
Conversely, if the analytic invariants are equivalent, then up to modifying one of the normalizing transformation, one can suppose that they are in fact equal, in which case
$\Phi_\pm=\tilde{\mbf\Psi}_{\pm}\circ\mbf\Psi_{\pm}^{\circ(-1)}$ are analytic transformations between the systems on $\sE_+$ and $\sE_-$. In fact $\Phi_+=\Phi_-$ is an analytic on the whole $\epsilon$-neighborhood $\sE$. 
Indeed, the composition $\Phi_+\circ\Phi_-^{\circ-1}$ is a symmetry of the second system on the intersection $\sE_+\cap\sE_-$, and as such it is determined by its value at $x=0$; but since $\tilde{\mbf\Psi}_{+}\!\restriction_{x=0}=\tilde{\mbf\Psi}_{-}\!\restriction_{x=0}=\tilde{\psi}^{(0)}(u,\epsilon)$ and
${\mbf\Psi}_{+}\!\restriction_{x=0}={\mbf\Psi}_{-}\!\restriction_{x=0}={\psi}^{(0)}(u,\epsilon)$ are analytic in $\epsilon$ \eqref{eq:HC-Phi},
this means that $\Phi_+\circ\Phi_-^{\circ-1}\!\restriction_{x=0}=\id$ and therefore $\Phi_+\circ\Phi_-^{\circ-1}=\id$.
\end{proof}

\subsection{Decomposition of monodromy operators}

For $\epsilon\neq 0$, let $x_0\in\sX_\pm(\epsilon)\sminus\{0,\epsilon\}$ be a base-point, and let two counterclockwise simple loops around the singular points $x_{i,\pm}$, $i=1,2$, be as in Figure~\ref{figure:HC-loops}.
Correspondingly, we have two monodromy operators $\mfr M_{x_{i,\pm}}$ acting on the foliation by the solutions of the original system \eqref{eq:HC-system} by analytic continuation 
along the loops.
Since the monodromy operators $\mfr M_{x_{i,\pm}}$ act on the foliation, they  are independent of the choice of the two-parameter general solution on which they act on the left 
(a different general solution is related to it by a change of the parameter, independent of $x$ and acting on the right). In particular
$$\mfr M_{x_{1,+}}\!=\mfr M_{x_{2,-}},\quad \mfr M_{x_{2,+}}\!=\mfr M_{x_{1,-}}.$$

\begin{figure}[t]
\centering 
\includegraphics[width=.25\textwidth]{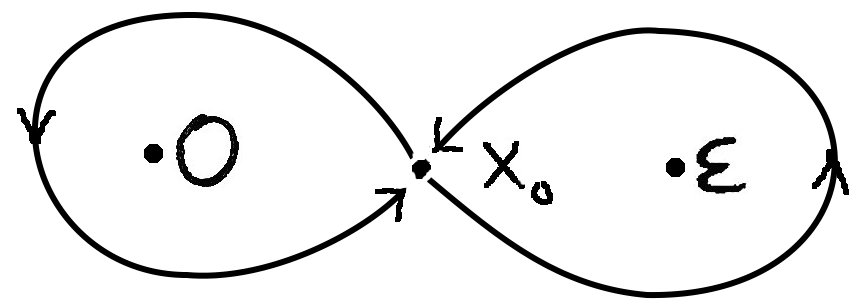}
\caption{Simple loops along which the monodromy operators $\mfr M_{0}$ and $\mfr M_{\epsilon}$ are defined.}
\label{figure:HC-loops}
\end{figure}

\begin{theorem}\label{proposition:HC-monodromy}\label{theorem:HC-decomposition}
 For $\epsilon\neq 0$, the monodromy operators $\mfr M_{x_{i,\pm}}$ of the original foliation are well defined on some open neighborhood of the ramified center manifold $y=\mbf\Psi_\pm(0,x,\epsilon)$ in $\sY\times\sX_\pm(\epsilon)$.
Their (left) action is given by
\begin{equation}
\mfr M_{x_{i,\pm}}=\mfr S_{i,\pm}\circ\mfr N_{i,\pm},
\end{equation}
where $\mfr S_{i,\pm}$ are the Stokes operators \eqref{eq:HC-mfrS} and  $\mfr N_{i,\pm}$ are the formal monodromy operators \eqref{eq:HC-mfrN}. 
Hence
\begin{align*}
\mfr M_{0}&=\mfr S_{1,+}\circ\mfr N_{1,+}=\mfr S_{2,-}\circ\mfr N_{2,-},\\
\mfr M_{\epsilon}&=\mfr S_{2,+}\circ\mfr N_{2,+}=\mfr S_{1,-}\circ\mfr N_{1,-}.
\end{align*}
Their right action on analytic extension of the canonical general solutions $y_\pm^\bullet$ \eqref{eq:HC-y} to the whole $\sX_\pm(\epsilon)$ is given by
\begin{align*}
 \mfr M_{x_{i,\pm}}(y_\pm^{\bullet}(x,\epsilon;c),x,\epsilon)=y_\pm^{\bullet}(x,\epsilon;\cdot)\circ M_{i,\pm}^{\bullet}(c,\epsilon),\qquad\bullet=\uppie,\downpie,
\end{align*}
where
\begin{equation}\label{eq:HC-Mdecomposition}
\begin{aligned}
M_{1,\pm}^{\uppie}&=N_{x_{2,\pm}}^{\circ(-1)}\circ S_{1,\pm}\circ N,&  M_{1,\pm}^{\downpie}&=S_{1,\pm}\circ N_{x_{1,\pm}},\\
M_{2,\pm}^{\uppie}&=S_{2,\pm}\circ N_{x_{2,\pm}}, &  M_{2,\pm}^{\downpie}&=N_{x_{2,\pm}}\circ S_{2,\pm},
\end{aligned}
\end{equation}
cf. Figure~\ref{figure:HC-monodromy}.
\end{theorem}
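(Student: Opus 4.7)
The plan is to transport the analytic continuation around each loop through the sectoral normalizations $\mbf\Psi_\pm$, where the dynamics is rectified to the model system, and to track the passage between the sub-sectors $\sX_\pm^{\uppie}(\epsilon)$ and $\sX_\pm^{\downpie}(\epsilon)$ in terms of Stokes operators, and the passage around a fixed singularity on a given sheet in terms of formal monodromy.

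First I would verify the well-definedness claim. The ramified center manifold $y=\mbf\Psi_\pm(0,x,\epsilon)$ is the image of $\{u=0\}$ under the normalizing transformation of Theorem~\ref{theorem:HC-normalization}, which is analytic in $u\in\sU$ and uniformly continuous on $\sXE_\pm$. Because $\mbf\Psi_\pm$ is a fibered diffeomorphism in $u$ on $\sU\times\sXE_\pm$, analytic continuation of solutions $y$ of \eqref{eq:HC-system} corresponds, on an open neighborhood of $\{u=0\}$, to analytic continuation of solutions of the model \eqref{eq:HC-normalform}; for the latter this is immediate from the explicit formula \eqref{eq:HC-u}.

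Next I would choose, based on Figure~\ref{figure:HC-loops}, a representative of each homotopy class of loop around $x_{i,\pm}$ that begins in one of $\sX_\pm^{\uppie}(\epsilon), \sX_\pm^{\downpie}(\epsilon)$, winds once around $x_{i,\pm}$ staying inside a single sheet of $\sX_\pm(\epsilon)$, and returns to the base point by crossing the appropriate intersection sector $\sX_{i,\pm}^\cap(\epsilon)$. Under $\mbf\Psi_\pm^{\circ(-1)}$ the loop around $x_{i,\pm}$ pulls back to the concatenation of two pieces: a loop that stays inside one sheet --- which in $u$-coordinates acts by $\mscr N_{x_{i,\pm}}$ from \eqref{eq:HC-mscrN} ---  followed by the sheet change across $\sX_{i,\pm}^\cap(\epsilon)$, which by \eqref{eq:HC-mscrS} acts by $\mscr S_{i,\pm}$. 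Composing and conjugating back by $\mbf\Psi_\pm$ using \eqref{eq:HC-mfrN} and \eqref{eq:HC-mfrS} gives the identity $\mfr M_{x_{i,\pm}}=\mfr S_{i,\pm}\circ\mfr N_{i,\pm}$ acting on the left on the foliation. The identifications $\mfr M_{x_{1,+}}=\mfr M_{x_{2,-}}$ etc. then follow from \eqref{eq:HC-x12}.

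For the right action on the canonical solutions $y_\pm^{\bullet}(x,\epsilon;c)=\mbf\Psi_\pm(u^\bullet(x,\epsilon;c),x,\epsilon)$ I would distinguish the two sheets $\bullet=\uppie,\downpie$ by the choice of branch $E_\chi^{\bullet}$. The branches are chosen to agree on $\sX_{2,\pm}^\cap$ and to differ by the total formal monodromy $\mscr N=\mscr N_{0}\circ\mscr N_{\epsilon}$ on $\sX_{1,\pm}^\cap$, which translates on the $c$-parameter side into multiplication by $N$ \eqref{eq:HC-N}. Starting at a base point in $\sX_\pm^{\downpie}$ and running the loop around $x_{1,\pm}$ one therefore collects $N_{x_{1,\pm}}$ from the formal monodromy on the $\downpie$ sheet and $S_{1,\pm}$ from crossing $\sX_{1,\pm}^\cap$; starting on the $\uppie$ sheet one must first conjugate by the branch-change, yielding $M_{1,\pm}^{\uppie}=N_{x_{2,\pm}}^{\circ(-1)}\circ S_{1,\pm}\circ N$. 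The analogous bookkeeping at $x_{2,\pm}$, where the two sheets agree on $\sX_{2,\pm}^\cap$, produces the remaining formulas of \eqref{eq:HC-Mdecomposition}. I expect the main obstacle to be precisely this bookkeeping of composition orders and base-point/sheet conventions; Figure~\ref{figure:HC-monodromy} is the tool that makes the accounting unambiguous once one commits to one specific representative per homotopy class, and the commutativity relations among the $N_{x_{i,\pm}}$ in \eqref{eq:HC-N} ensure that the final formulas are independent of the auxiliary choices.
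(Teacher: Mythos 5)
Your proposal is correct and takes essentially the same route as the paper's proof: you transport the continuation of the canonical solutions through $\mbf\Psi_\pm$, split the loop around $x_{i,\pm}$ into the on-sheet formal monodromy and the sheet change across $\sX_{i,\pm}^\cap(\epsilon)$, and use the defining relations \eqref{eq:HC-mfrS}, \eqref{eq:HC-mscrNa}, \eqref{eq:HC-mfrN} to get $\mfr M_{x_{i,\pm}}=\mfr S_{i,\pm}\circ\mfr N_{i,\pm}$, with the same branch-change bookkeeping producing \eqref{eq:HC-Mdecomposition}. The paper merely writes this out as an explicit chain of equalities rather than a geometric decomposition of the loop.
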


\begin{proof}[Proof of Theorem~\ref{proposition:HC-monodromy}]

The demonstration of the given formulas is straightforward, but we include it here for the sake of completeness. 
Let $\bar x:=x_{i,\pm}+e^{2\pi i}(x-x_{i,\pm})$, then 
\begin{align*}
\mfr M_{x_{i,\pm}}(\cdot,x,\epsilon)\circ y_\pm(x,\epsilon;c)&=y_\pm(\bar x,\epsilon;c)=\mbf\Psi_\pm(\cdot,\bar x,\epsilon)\circ u(\bar x,\epsilon;c)\\
&=\big(\mfr S_{i,\pm}\circ \mbf\Psi_\pm\big)(\cdot,x,\epsilon)\circ u(\bar x,\epsilon;c)\\
&=\big(\mfr S_{i,\pm}\circ \mbf\Psi_\pm\circ\mscr N_{i,\pm}\big)(\cdot,x,\epsilon)\circ u(x,\epsilon;c)\\
&=\big(\mfr S_{i,\pm}\circ\mfr N_{i,\pm}\circ \mbf\Psi_\pm\big)(\cdot,x,\epsilon)\circ u(x,\epsilon;c)\\
&=\big(\mfr S_{i,\pm}\circ\mfr N_{i,\pm}\circ y_\pm\big)(x,\epsilon;c),
\end{align*}
using \eqref{eq:HC-mfrS}, \eqref{eq:HC-mscrNa}, \eqref{eq:HC-mfrN}.
Similarly, to calculate $M_{1,\pm}^{\uppie}$ for example,
\begin{align*}
y_\pm^{\uppie}(\bar x,\epsilon;c)&=\mbf\Psi_\pm(\cdot,\bar x,\epsilon)\circ u^{\uppie}(\bar x,\epsilon;c)\\
&=\mbf\Psi_\pm(\cdot,\bar x,\epsilon)\circ u^{\downpie}(\bar x,\epsilon;\cdot)\circ N_{x_{2,\pm}}(c,\epsilon)\\
&=\mbf\Psi_\pm(\cdot,\bar x,\epsilon)\circ u^{\downpie}(x,\epsilon;\cdot)\circ N_{x_{1,\pm}} \circ N_{x_{2,\pm}}(c,\epsilon)\\
&=\mbf\Psi_\pm(\cdot,x,\epsilon)\circ u^{\downpie}(x,\epsilon;\cdot)\circ S_{1,\pm} \circ N_{x_{1,\pm}} \circ N_{x_{2,\pm}}(c,\epsilon)\\
&=\mbf\Psi_\pm(\cdot,x,\epsilon)\circ u^{\uppie}(x,\epsilon;\cdot) \circ N_{x_{2,\pm}}^{\circ(-1)} \circ S_{1,\pm} \circ N_{x_{1,\pm}} \circ N_{x_{2,\pm}}(c,\epsilon)\\
&=y_\pm^{\uppie}(x,\epsilon;\cdot)\circ N_{x_{2,\pm}}^{\circ(-1)} \circ S_{1,\pm} \circ N(c,\epsilon),
\end{align*}
see Figure~\ref{figure:HC-monodromy}.
The general solutions $y_\pm^{\uppie},\ y_\pm^{\downpie}$ are related by
$$y_\pm^{\uppie}(x,\epsilon;c)=y_\pm^{\downpie}(x,\epsilon;\cdot)\circ N_{x_{2,\pm}}(c,\epsilon).$$
\end{proof}

Note that in general, a composition of the two monodromies may not be defined if the image of the first does not intersect the domain of definition the second.

\subsection{Accumulation of monodromy}

\begin{definition}[Monodromy pseudogroup]~\\[-12pt]
\begin{enumerate}
\item  For $\epsilon\neq 0$, the pseudogroup generated by the monodromy operators 
$$\langle\mfr M_{0}(\cdot,x,\epsilon),\ \mfr M_{\epsilon}(\cdot,x,\epsilon)\rangle$$ 
is called the (local) \emph{monodromy pseudogroup}.
The pseudogroup generated by the corresponding action on the initial condition $c$
$$\langle M_{1,\pm}^\bullet(\cdot,\epsilon),\ M_{2,\pm}^\bullet(\cdot,\epsilon) \rangle$$ 
is its representation with respect to the general solution $y_\pm^\bullet(x,\epsilon;c)$.
\item For $\epsilon= 0$, the pseudogroup generated by the Stokes operators and by the 
elements of the exponential torus (pushed-forward by the sectoral transformations $\mbf\Psi^{\bullet}$):  
$$\langle \mfr S_{1,\pm}(\cdot,x,0),\ \mfr S_{2,\pm}(\cdot,x,0),\ \{\mfr T_a^\bullet(\cdot,x,0)\}_a \rangle,$$ 
where
$\mfr T_a^\bullet(\cdot,x,0)=\mbf\Psi_\pm^{\bullet}(\cdot,x,0)_* \Cal T_{a}=\Phi^1_{(\mbf\Psi_\pm^{\bullet})_*(a(h)X_h)}$,
is called the (local) \emph{wild monodromy pseudogroup}.
The pseudogroup generated by the corresponding action on the initial condition $c$
\begin{equation}\label{eq:HC-wildmonodromy}
 \langle S_{1,\pm}(\cdot,0),\ S_{2,\pm}(\cdot,0),\ \{\Cal T_a(\cdot)\}_a\rangle,
\end{equation}
is its representation with respect to the formal transseries solution $\hat y(x,0;c)=\hat{\mbf\Psi}(u(x,0;c),x,0)$.
\end{enumerate}
\end{definition}
Note that the pseudogroup \eqref{eq:HC-wildmonodromy} is independent of the freedom of choice of the sectoral normalizations $\mbf\Psi_\pm^{\bullet}$ of Theorem~\ref{theorem:HC-normalization0}.

One of the main goals of this paper is to understand the relation between the monodromy pseudogroup for $\epsilon\neq 0$ and the wild monodromy pseudogroup for $\epsilon=0$.

\medskip

\textbf{Suppose} that the formal invariant $\chi(h,x,\epsilon)$ is such that
\begin{equation}\label{eq:HC-assumption}
 \chi(h,x,0)=\lambda^{(0)}(0)+x\chi^{(1)}(h,0),
\end{equation}
and therefore
$$\chi(h,x,\epsilon)=\lambda^{(0)}(0)+\epsilon\tfrac{\partial\chi^{(0)}}{\partial\epsilon}(h,0)+x\chi^{(1)}(h,0)+O(x\epsilon)+O(\epsilon^2).$$
Let $\{\epsilon_n\}_{n\in\pm\N}$ be sequence in $\sE_\pm\sminus\{0\}$ defined by
\begin{equation}\label{eq:HC-epsilon_n}
 \tfrac{\lambda^{(0)}(0)}{\epsilon_n}=\tfrac{\lambda^{(0)}(0)}{\epsilon_0}+n,\qquad \epsilon_0\in\sE_\pm\sminus\{0\}
\end{equation}
along which the exponential factor $e^{\frac{2\pi i\lambda^{(0)}}{\epsilon}}$ in the formal monodromy \eqref{eq:HC-N} stays constant, and denote
\begin{equation}\label{eq:HC-kappa}
 \kappa:=e^{\frac{2\pi i\lambda^{(0)}}{\epsilon_0}}.
\end{equation}
Then the formal monodromy operators $\mscr N_0(u,x,\epsilon),\mscr N_\epsilon(u,x,\epsilon)$, resp. $N_0(u,x,\epsilon), N_\epsilon(u,x,\epsilon)$, converge along each such sequence to a symmetry of the model system (element of the exponential torus) 
\begin{equation}\label{eq:HC-tildeN}
\begin{aligned}
\hskip-6pt\Tilde{N}_0(\kappa;u,x):= \!\!\lim_{n\to\pm\infty\!\!}\!\! N_0(u,x,\epsilon_n)&:\ c\mapsto 
\left(\!\!\begin{smallmatrix} \kappa^{-1}e^{-2\pi i\frac{\partial\chi^{(0)}}{\partial\epsilon}(h,0)} & 0 \\ 0 & \kappa e^{2\pi i\frac{\partial\chi^{(0)}}{\partial\epsilon}(h,0)} \end{smallmatrix}\!\right) c,\\[3pt]
\hskip-6pt\Tilde{N}_\epsilon(\kappa;u,x):= \!\!\lim_{n\to\pm\infty\!\!}\!\! N_\epsilon(u,x,\epsilon_n)&:\ c\mapsto
\left(\!\!\begin{smallmatrix} \kappa e^{2\pi i[\frac{\partial\chi^{(0)}}{\partial\epsilon}(h,0)+\chi^{(1)}(h,0)]}\hskip-15pt & 0 \\ 0 & \hskip-15pt\kappa^{-1} e^{-2\pi i[\frac{\partial\chi^{(0)}}{\partial\epsilon}(h,0)+\chi^{(1)}(h,0)]} \end{smallmatrix}\!\right) c,\hskip-6pt
\end{aligned}
\end{equation}
$\kappa\in\C^*$.
This implies that also the monodromy operators $M_{i,\pm}^\cdot(c,\epsilon)$, resp. $\mfr M_{x_{i,\pm}}(y,x,\epsilon)$, converge along such sequences 
$\{\epsilon_n\}_{n\in\pm\N}\subset \sE_\pm\sminus\{0\}$.
Denote 
$$\Tilde{\mfr N}_{{i,\pm}}^\bullet(\kappa;y,x):= \lim_{n\to\pm\infty} \mfr N_{{i,\pm}}(y,x,\epsilon_n),\qquad x\in\sX_\pm^\bullet,\quad\bullet=\uppie,\downpie.$$

\begin{theorem}\label{theorem:HC-accumulation}
Suppose that the formal invariant of the form \eqref{eq:HC-assumption}.
Then the monodromy operators of the system \eqref{eq:HC-system} for $\epsilon\neq 0$ accumulate along the sequences $\{\epsilon_n\}_{n\in\pm\N}$  \eqref{eq:HC-epsilon_n}
to a 1-parameter family of wild monodromy operators
\begin{align*}
\Tilde{\mfr M}_{1,\pm}(\kappa;y,x):=\lim_{n\to\pm\infty} \mfr M_{x_{1,\pm}}(y,x,\epsilon_n)&=\mfr S_{1,\pm}(\cdot,x,0)\circ\tilde{\mfr N}_{x_{1,\pm}}^{\downpie}(\kappa;y,x),\qquad 
x\in\sX^\cap_{1,\pm}(0),\\
\Tilde{\mfr M}_{2,\pm}(\kappa;y,x):=\lim_{n\to\pm\infty} \mfr M_{x_{2,\pm}}(y,x,\epsilon_n)&=\mfr S_{2,\pm}(\cdot,x,0)\circ\tilde{\mfr N}_{x_{2,\pm}}^{\uppie}(\kappa;y,x),\qquad 
x\in\sX^\cap_{2,\pm}(0).
\end{align*}
In particular,
if we replace $\kappa$ by $e^{-2\pi i\big[\frac{\partial\chi^{(0)}}{\partial\epsilon}(h,0)+\delta_{i,\pm}(0)\chi^{(1)}(h,0)\big]}$, $\delta_{i,\pm}(\epsilon)=\frac{x_{i,\pm}(\epsilon)}{\epsilon}$, so that $\tilde{\mfr N}_{x_{i,\pm}}^{\bullet}(\kappa;y,x)$ becomes an identity, we obtain the Stokes operators
\begin{equation}
\mfr S_{i,\pm}(y,x,0)=\Tilde{\mfr M}_{i,\pm}(e^{-2\pi i\big[\frac{\partial\chi^{(0)}}{\partial\epsilon}(h,0)+\delta_{i,\pm}(0)\chi^{(1)}(h,0)\big]};y,x).
\end{equation}
The vector field
\begin{equation}\label{eq:HC-infinitesimalgenerator}
\dot y=\pm(-1)^{i}\big(\kappa\tfrac{\partial}{\partial\kappa}\tilde{\mfr M}_{i,\pm}(\kappa;y,x)^{\circ(-1)}\big)\circ \tilde{\mfr M}_{i,\pm}(\kappa;y,x)
\end{equation}
equals to the push-forward $\mbf\Psi_\pm^\bullet(\cdot,x,0)_* \big(X_h\big)$
of the vector field $X_h=u_1\partial_{u_1}-u_2\partial_{u_2}$, where $\bullet=\downpie$ if $i=1$ and $\bullet=\uppie$ if $i=2$, 
which ``generates'' the commutative Lie algebra of bounded infinitesimal symmetries on the sector $\sX^\bullet(0)$.
\end{theorem}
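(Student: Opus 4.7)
The strategy is to apply the decomposition $\mfr M_{x_{i,\pm}}=\mfr S_{i,\pm}\circ\mfr N_{i,\pm}$ of Theorem~\ref{theorem:HC-decomposition} and pass to the limit along $\{\epsilon_n\}$ in each factor separately, invoking the radial convergence statement of Theorem~\ref{theorem:HC-normalization}(ii). For the formal part, assumption \eqref{eq:HC-assumption} yields the expansion
\[
\tfrac{\chi^{(0)}(h,\epsilon_n)}{\epsilon_n}=\tfrac{\lambda^{(0)}(0)}{\epsilon_n}+\tfrac{\partial\chi^{(0)}}{\partial\epsilon}(h,0)+O(\epsilon_n),
\]
and the defining relation \eqref{eq:HC-epsilon_n} precisely keeps $e^{2\pi i\lambda^{(0)}(0)/\epsilon_n}=\kappa$ constant while driving the $O(\epsilon_n)$ term to zero. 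Plugging into \eqref{eq:HC-N} gives the explicit limits of $N_0,N_\epsilon$ asserted in \eqref{eq:HC-tildeN}, and conjugating by the sectoral normalization yields the limits $\tilde{\mfr N}_{x_{i,\pm}}^\bullet(\kappa;y,x)$.

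For the non-formal part, Theorem~\ref{theorem:HC-normalization}(ii) provides $\mbf\Psi_\pm(u,x,\epsilon_n)\to\mbf\Psi_\pm(u,x,0)$ uniformly on compact subsets of $\sU\times\sX_\pm^\bullet(0)$; since the Stokes isotropies are defined by the gluing relation \eqref{eq:HC-mfrS} over the intersection sectors $\sX_{i,\pm}^\cap(\epsilon)$, which themselves deform continuously in $\epsilon$ to $\sX_{i,\pm}^\cap(0)$, one gets $\mfr S_{i,\pm}(\cdot,x,\epsilon_n)\to\mfr S_{i,\pm}(\cdot,x,0)$. Composing with the previous limit then establishes the accumulation formulas for $\tilde{\mfr M}_{1,\pm},\tilde{\mfr M}_{2,\pm}$.

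The Stokes operator recovery is then purely algebraic: inspecting \eqref{eq:HC-tildeN}, the limit $\tilde N_{x_{i,\pm}}$ collapses to the identity precisely when $\kappa$ absorbs the remaining exponential. At $x_{1,\pm}$ the exponent is $\tfrac{\partial\chi^{(0)}}{\partial\epsilon}(h,0)$ (consistent with $\delta_{1,\pm}(0)=0$ when $x_{1,\pm}=0$, and symmetrically for the other case), while at $x_{2,\pm}$ the exponent picks up an additional $\chi^{(1)}(h,0)$ from the factor $\tfrac{\chi^{(0)}}{\epsilon}+\chi^{(1)}$, giving $\delta_{2,\pm}(0)=1$. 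Solving for $\kappa$ produces the stated value, and by the accumulation formula $\tilde{\mfr M}_{i,\pm}$ at that $\kappa$ equals $\mfr S_{i,\pm}(\cdot,x,0)$.

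For the infinitesimal generator, since $\mfr S_{i,\pm}$ is independent of $\kappa$ and $\tilde{\mfr M}_{i,\pm}=\mfr S_{i,\pm}\circ\tilde{\mfr N}_{i,\pm}^\bullet(\kappa)$, the Stokes factor cancels in
\[
(\kappa\partial_\kappa \tilde{\mfr M}_{i,\pm}^{\circ(-1)})\circ\tilde{\mfr M}_{i,\pm}=(\kappa\partial_\kappa\tilde{\mfr N}_{i,\pm}^{\bullet\,\circ(-1)})\circ\tilde{\mfr N}_{i,\pm}^\bullet.
\]
Writing $\tilde{\mfr N}_{i,\pm}^\bullet=(\mbf\Psi_\pm^\bullet)_*\Cal T_{a(\kappa,h)}$ with $a$ depending on $\kappa$ only through a term $\pm(-1)^{i+1}\log\kappa$ read off \eqref{eq:HC-tildeN}, differentiating through the exponential torus and accounting for the inversion sign yields \eqref{eq:HC-infinitesimalgenerator} as the push-forward of $X_h=u_1\partial_{u_1}-u_2\partial_{u_2}$. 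The main technical obstacle is ensuring the radial uniform convergence of $\mfr S_{i,\pm}$ on the deforming intersection sectors; this relies on the uniform continuity of $\mbf\Psi_\pm$ on the closure of $\sXE_\pm$ guaranteed by Theorem~\ref{theorem:HC-normalization}(ii), together with the fact that the $\epsilon\to 0$ limit of $\sX_{i,\pm}^\cap(\epsilon)$ along a fixed $\arg\epsilon=\beta$ is a genuine sub-sector of $\sX_{i,\pm}^\cap(0)$, so the sectoral limits exist and coincide with the Borel sums of $\hat{\mbf\Psi}(u,x,0)$.
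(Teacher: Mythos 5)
Your proposal follows the same route that the paper takes implicitly: the paper gives no separate proof of this theorem, presenting it as a direct consequence of the decomposition $\mfr M_{x_{i,\pm}}=\mfr S_{i,\pm}\circ\mfr N_{i,\pm}$ of Theorem~\ref{theorem:HC-decomposition}, the limits \eqref{eq:HC-tildeN} computed just before the statement, and the continuity in $\epsilon$ of $\mbf\Psi_\pm$ from Theorem~\ref{theorem:HC-normalization}(ii)--(iii); that is exactly your factor-by-factor passage to the limit, and your use of uniform continuity on $\sXE_\pm$ correctly covers the fact that the sequence $\epsilon_n$ is not radial (its argument only tends to $\arg(\pm\lambda^{(0)}(0))$). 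The cancellation of the $\kappa$-independent Stokes factor in the logarithmic derivative is also the right key observation for the last claim.

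Two bookkeeping points need a second pass. First, $\delta_{i,\pm}(0)$ is governed by whether $x_{i,\pm}$ equals $0$ or $\epsilon$ \eqref{eq:HC-x12}, so it mixes $i$ with the sign: $\delta_{1,+}=\delta_{2,-}=0$ while $\delta_{2,+}=\delta_{1,-}=1$; your phrase ``giving $\delta_{2,\pm}(0)=1$'' is only correct for the $+$ family. Second, writing $\tilde{\mscr N}_{x_{i,\pm}}(\kappa)=\Cal T_{a}$, the coefficient of $\log\kappa$ in $a$ read off \eqref{eq:HC-tildeN} is $\pm(-1)^{i}$ (it equals $-1$ for $\tilde N_0$, i.e.\ for $(i,\pm)=(1,+),(2,-)$, and $+1$ for $\tilde N_\epsilon$), not $\pm(-1)^{i+1}$ as you state. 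Since the identity \eqref{eq:HC-infinitesimalgenerator} hinges precisely on this coefficient combined with the extra minus sign produced by differentiating the inverse family, you should carry out that sign chase explicitly (e.g.\ on the diagonal model, where $\big(\kappa\partial_\kappa\Cal T_{a(\kappa)}^{\circ(-1)}\big)\circ\Cal T_{a(\kappa)}=-\tfrac{\partial a}{\partial\log\kappa}X_h$) and reconcile the outcome with the stated prefactor $\pm(-1)^i$, rather than assert it; as written, your two sign conventions compensate each other, which hides whether the verification actually closes. Neither point affects the structure of the argument --- the accumulation formulas and the recovery of the Stokes operators are fine --- but this sign verification is the only genuinely delicate computation in the theorem and it is currently asserted with a slip.
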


\begin{conclusion}
The knowledge of the limits $\Tilde{\mfr M}_{i,\pm}(\kappa;y,x)$, $\kappa\in\C^*$, allows to recover the infinitesimal symmetry $\mbf\Psi_\pm^\bullet(\cdot,x,0)_*\big(X_h\big)$
\eqref{eq:HC-infinitesimalgenerator},
and hence its Hamiltonian, the bounded first integral $\mbf\Psi_\pm^\bullet(\cdot,x,0)_*(h)$ which vanishes at the singular points $(y_0(x_{i,\pm},\epsilon),x_{i,\pm})$, and therefore, 
knowing the formal invariant $\chi$, also the formal monodromy operators $\tilde{\mfr N}_{x_{1,\pm}}^{\bullet}(\kappa;y,x)$,
and finally the Stokes isotropies $\mfr S_{i,\pm}(y,x,0)$.
\end{conclusion}

\begin{remark}
In the case when the assumption \eqref{eq:HC-assumption} is not met, one can nevertheless get a similar ``accumulation'' result by replacing in \eqref{eq:HC-epsilon_n} 
$\lambda^{(0)}(0)$ by $\chi(h,0,0)$ and defining $\kappa(h)=e^{\frac{2\pi i\chi(h,0,0)}{\epsilon_0(h)}}$ \eqref{eq:HC-kappa}.
\end{remark}

\section{Confluence in $2\!\times\!2$ traceless linear systems and their differential Galois group}\label{section:HC-linear}

To illustrate the matter of the previous section, let us consider a confluence of two regular singular points to a non-resonant irregular singular point in a family of linear systems
\begin{equation}\label{eq:HC-wm-1}
x(x-\epsilon)\frac{dy}{dx}=A(x,\epsilon)y,\qquad y\in\C^2,
\end{equation}
where $A$ is a $2\!\times\! 2$ traceless complex matrix depending analytically on $(x,\epsilon)\in(\C\!\times\!\C,0)$, such that 
$A(0,0)\neq 0$ has two distinct eigenvalues $\pm\lambda^{(0)}(0)$. 

The Theorem~\ref{theorem:HC-normalization} in this case can be found in the thesis of Parise \cite{Pa} and in the work of Lambert and Rousseau \cite{LR} (see also \cite{HLR}).
It provides us with a canonical fundamental solution matrices
\begin{equation}
 Y_\pm^\bullet(x,\epsilon)=\mbf\Psi_\pm(x,\epsilon)\cdot U_\pm^\bullet(x,\epsilon),\qquad\bullet=\uppie,\downpie,
\end{equation}
where the transformation matrix $\mbf\Psi_\pm(x,\epsilon)$ is bounded on $\sXE_\pm$, and
\begin{equation}
 U_\pm^\bullet(x,\epsilon)=\left(\begin{smallmatrix} E_\lambda^\bullet(x,\epsilon) & 0 \\ 0 & E_\lambda^\bullet(x,\epsilon)^{-1} \end{smallmatrix}\right),
\end{equation}
is a solution to the diagonal model system
\begin{equation*}
x(x-\epsilon)\frac{dy}{dx}=\left(\begin{smallmatrix} \lambda(x,\epsilon) &  \\  & -\lambda(x,\epsilon) \end{smallmatrix}\right)y.
\end{equation*}
The solution basis $Y_\pm^\bullet(x,\epsilon)$ is also called a \emph{mixed basis}: the first (resp. second) column spans the subspace of solutions that asymptotically vanish when $x\to x_{1,\pm}(\epsilon)$ (resp. when $x\to x_{2,\pm}(\epsilon)$), and it is an eigensolution with respect to the corresponding monodromy operator $\mfr M_{x_{1,\pm}}$ (resp. $\mfr M_{x_{2,\pm}}$) associated to its eigenvalue $e^{\pm2\pi i\frac{\lambda(x_{1,\pm},\epsilon)}{\epsilon}}$ (resp. $e^{\pm2\pi i\frac{\lambda(x_{2,\pm},\epsilon)}{\epsilon}}$). 
A general solution is a linear combination
$$y_\pm^{\bullet}(x,\epsilon;c)=Y_\pm^{\bullet}(x,\epsilon)\cdot c,\qquad\bullet=\uppie,\downpie.$$

Let $\K$ be the field of meromorphic functions of the variable $x$ on a fixed small neighborhood of $0$, equipped with the differentiation $\frac{d}{dx}$.
For a fixed small $\epsilon$, the local \emph{differential Galois group} (also called the \emph{Picard-Vessiot group}) of the system \eqref{eq:HC-wm-1} is the group of $\K$-automorphisms of the differential field
$\K\langle Y(\cdot,\epsilon)\rangle$, generated by the components of any fundamental matrix solution $Y(x,\epsilon)$.
The differential Galois group acts on the foliation associated to the system by left multiplication.
Fixing a fundamental solution matrix $Y=Y_\pm^{\bullet}$, then each automorphism is represented by a right multiplication of $Y_\pm^{\bullet}$ by a constant invertible matrix,
hence the differential Galois group is represented by an (algebraic) subgroup of $\SL_2(\C)$ acting on the right.

It is well known \cite{MR2,SP} that the differential Galois group is the Zariski closure of 

\begin{itemize}\leftskip12pt \itemsep0pt 
\item[$\epsilon\neq 0$:] \emph{the monodromy group} generated by the two monodromy operators around the singular points $0$ and $\epsilon$, 
\item[$\epsilon= 0$:] \emph{the wild monodromy group}\footnote{The name ``wild monodromy''is borrowed from \cite{MR2}.} 
generated by the Stokes operators and the linear exponential torus\,\footnote{For general linear systems one would need to add also the total formal monodromy $N(0)=T_{2\pi i\lambda^{(1)}(0)}$, which in our case already belongs to the exponential torus.} 
which acts on the fundamental solutions $Y_\pm^{\bullet}$ as
\begin{equation}
 \mfr T_a^\bullet:Y_\pm^{\bullet}(x,0)\mapsto Y_\pm^{\bullet}(x,0)\cdot T_a,\quad\text{where}\quad 
T_a=\left(\begin{smallmatrix} e^a & \\ & e^{-a} \end{smallmatrix}\right),\quad a\in\C.
\end{equation}
\end{itemize} 
The question is how are these two different descriptions related?

\medskip

The monodromy matrices of $Y_\pm^{\uppie},\ Y_\pm^{\downpie}$, around the points $x_{1,\pm}(\epsilon)$, $x_{2,\pm}(\epsilon)$, $\epsilon\in\sE_\pm\sminus\{0\}$, are given respectively by
\begin{align*}
M_{1,\pm}^{\uppie}&=N_{x_{2,\pm}}^{(-1)} S_{1,\pm} N,& M_{2,\pm}^{\uppie}&=S_{2,\pm} N_{x_{2,\pm}}\\
M_{1,\pm}^{\downpie}&=S_{1,\pm} N_{x_{1,\pm}}, & M_{2,\pm}^{\downpie}&=N_{x_{2,\pm}} S_{2,\pm},
\end{align*}
where $S_{i,\pm}$ are of the form
\begin{equation*}
S_{1,\pm}=\left(\begin{smallmatrix} 1 & 0 \\[3pt] s_{1,\pm} & 1 \end{smallmatrix}\right),\qquad
S_{2,\pm}=\left(\begin{smallmatrix} 1 & s_{2,\pm} \\[3pt] 0 & 1 \end{smallmatrix}\right).
\end{equation*}
In particular $M_{1,\pm}^{\bullet}$ is lower-triangular and $M_{2,\pm}^{\bullet}$ is upper-triangular. 

When $\epsilon\to 0$ along a sequence\footnote{The idea of taking limits of monodromy along such sequences can be found in the works of J.-P.~Ramis \cite{Ra1} or A.~Duval \cite{Du}.}
 $\frac{1}{\epsilon_n}=\frac{1}{\epsilon_0}+\frac{n}{\lambda^{(0)}}$, $n\in\pm\N$, $\epsilon_0\in\sE_\pm\sminus\{0\}$,
these monodromy converge respectively to $\tilde M_{i,\pm}^{\bullet}(\kappa)=\lim_{n\to\pm\infty} M_{i,\pm}^{\bullet}(\epsilon_n)$ given by
\begin{align*}
\tilde M_{1,+}^{\uppie}(\kappa)&=\tilde N_\epsilon(\kappa)^{-1}S_{1}(0)N(0),& 
\tilde M_{1,+}^{\downpie}(\kappa)&=S_{1}(0)\tilde N_0(\kappa),\\
\tilde M_{2,+}^{\uppie}(\kappa)&=S_{2}(0)\tilde N_\epsilon(\kappa),&
\tilde M_{2,+}^{\downpie}(\kappa)&=\tilde N_\epsilon(\kappa)S_{2}(0),\\[6pt]
\tilde M_{1,-}^{\uppie}(\kappa)&=\tilde N_0(\kappa)^{-1}S_{1}(0)N(0),& 
\tilde M_{1,-}^{\downpie}(\kappa)&=S_{1}(0)\tilde N_\epsilon(\kappa),\\ 
\tilde M_{2,-}^{\uppie}(\kappa)&=S_{2}(0)\tilde N_0(\kappa),&
\tilde M_{2,-}^{\downpie}(\kappa)&=\tilde N_0(\kappa)S_{2}(0),
\end{align*}
with 
$$\tilde N_0(\kappa)=\left(\begin{smallmatrix} \kappa^{-1} & \\[3pt] & \kappa \end{smallmatrix}\right)T_{-2\pi i \frac{d\lambda^{(0)}}{d\epsilon}(0)},\quad
\tilde N_\epsilon(\kappa)=\left(\begin{smallmatrix} \kappa & \\[3pt] & \kappa^{-1} \end{smallmatrix}\right)T_{2\pi i [\frac{d\lambda^{(0)}}{d\epsilon}(0)+\lambda^{(1)}(0)]},\quad
N(0)=T_{2\pi i\lambda^{(1)}(0)}.$$
We call them \emph{wild monodromy matrices}. 
The family of them 
$$\{\tilde M_{1,\pm}^\bullet(\kappa),\ \tilde M_{2,\pm}^\bullet(\kappa)\ |\ \kappa\in\C^*\}$$ 
generates the same group, the representation of the \emph{wild monodromy group} with respect to the formal solution $\hat Y(x,\epsilon)$, as does the collection of 
the Stokes matrices and the linear exponential torus
$$\{S_{1}(0),S_{2}(0)\}\cup\{T_a\ |\ a\in\C \}.$$ 
Hence we have the following theorem, whose general idea was suggested by J.-P.~Ramis \cite{Ra1}:

\begin{theorem}
When $\epsilon\to 0$ the elements of the monodromy group of the system \eqref{eq:HC-wm-1} accumulate to generators of the wild monodromy group of the limit system.
\end{theorem}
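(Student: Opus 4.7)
The plan is to proceed in three stages: first, express each monodromy matrix via the Stokes / formal-monodromy decomposition; second, pass to the limit along the prescribed sequences $\{\epsilon_n\}$; third, verify that the resulting family generates the wild monodromy group.

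First, I would apply the linear specialization of Theorem~\ref{theorem:HC-decomposition} to obtain, for each $\epsilon\in\sE_\pm\setminus\{0\}$, the factorizations
$M_{1,\pm}^{\uppie}=N_{x_{2,\pm}}^{-1} S_{1,\pm} N$, $M_{1,\pm}^{\downpie}=S_{1,\pm} N_{x_{1,\pm}}$, $M_{2,\pm}^{\uppie}=S_{2,\pm} N_{x_{2,\pm}}$, $M_{2,\pm}^{\downpie}=N_{x_{2,\pm}} S_{2,\pm}$, already displayed in the section. The Stokes matrices $S_{i,\pm}(\epsilon)$ are those produced by the sectoral normalization of Theorem~\ref{theorem:HC-normalization}, and their unipotent lower/upper-triangular form is the linear analogue of Proposition~\ref{proposition:HC-Stokesisotropies}.

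Second, along the sequence with $\tfrac{\lambda^{(0)}(0)}{\epsilon_n}=\tfrac{\lambda^{(0)}(0)}{\epsilon_0}+n$, the factor $\kappa=e^{2\pi i\lambda^{(0)}(0)/\epsilon_0}$ is kept constant while all other quantities in \eqref{eq:HC-N} extend analytically in $\epsilon$ to $0$. Hence $N_{x_{i,\pm}}(\epsilon_n)\to \tilde N_{x_{i,\pm}}(\kappa)$ with the explicit diagonal forms of \eqref{eq:HC-tildeN}. For the Stokes factor, Theorem~\ref{theorem:HC-normalization}(ii) tells us that $\mbf\Psi_\pm(x,\epsilon)$ is uniformly continuous on $\sXE_\pm$ and its radial limit at $\epsilon=0$ equals the Borel-sum sectoral transformation $\mbf\Psi_\pm^{\bullet}(x,0)$. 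Since $S_{i,\pm}(\epsilon)$ is determined by comparing the two sectoral branches of $\mbf\Psi_\pm$ on the intersection sector $\sX_{i,\pm}^\cap(\epsilon)$ (via the linear form of \eqref{eq:HC-mscrS}), uniform continuity transfers: $S_{i,\pm}(\epsilon_n)\to S_i(0)$. Composing the two limits in the order prescribed by the factorizations yields exactly the matrices $\tilde M_{i,\pm}^\bullet(\kappa)$ displayed in the text.

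Third, to show that $\{\tilde M_{i,\pm}^\bullet(\kappa)\mid \kappa\in\C^*\}$ generates the wild monodromy group $\langle S_1(0),S_2(0)\rangle\cdot\{T_a\mid a\in\C\}$, I would specialize $\kappa$ to the unique value $\kappa_{i,\pm}^*$ for which the corresponding diagonal factor $\tilde N_{x_{i,\pm}}(\kappa)$ reduces to the identity; this isolates $S_1(0)$ and $S_2(0)$ as honest accumulation values. Once $S_i(0)$ is in hand, the product
\[
\bigl(S_i(0)\bigr)^{-1}\cdot \tilde M_{i,\pm}^{\downpie}(\kappa)\ =\ \tilde N_{x_{i,\pm}}(\kappa)
\]
(and its analogue on the $\uppie$-side) produces, as $\kappa$ ranges over $\C^*$, every diagonal element $\left(\begin{smallmatrix} \kappa^{\mp 1} & \\ & \kappa^{\pm 1}\end{smallmatrix}\right)T_{\bullet}$, and these sweep out the full linear exponential torus $\{T_a:a\in\C\}$. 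Combined with $S_1(0),S_2(0)$ this is precisely the generating set of the wild monodromy group.

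The principal obstacle is the convergence $S_{i,\pm}(\epsilon_n)\to S_i(0)$: the intersection sectors $\sX_{i,\pm}^\cap(\epsilon)$ tilt and shrink as $\epsilon\to 0$, so one must exhibit a compact $K\subset \sX_{i,\pm}^\cap(0)$ contained in $\sX_{i,\pm}^\cap(\epsilon_n)$ for all large $n$ on which the two sectoral branches of $\mbf\Psi_\pm(\cdot,x,\epsilon_n)$ are simultaneously defined and jointly converge. This is exactly what the uniform continuity on $\sXE_\pm$ of Theorem~\ref{theorem:HC-normalization}(ii), together with the radial-limit statement for $\mbf\Psi_\pm^\bullet(u,x,0)$, is designed to provide.
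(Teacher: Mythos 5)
Your proposal is correct and follows essentially the same route as the paper: the (linear) decomposition $M_{i,\pm}^{\bullet}=$ Stokes factor $\times$ formal monodromy coming from the mixed-basis normalization of Lambert--Rousseau (the linear case of Theorems~\ref{theorem:HC-normalization} and~\ref{theorem:HC-decomposition}), passage to the limit along the sequences $\epsilon_n$ with $\kappa=e^{2\pi i\lambda^{(0)}/\epsilon_0}$ fixed, and identification of the limits $\tilde M_{i,\pm}^{\bullet}(\kappa)$ as generating the same group as $\{S_1(0),S_2(0)\}\cup\{T_a\}$. The only difference is that you make explicit two points the paper leaves implicit (the convergence $S_{i,\pm}(\epsilon_n)\to S_i(0)$ via the uniform continuity/radial-limit statement, and the generation argument by specializing $\kappa$ to kill the torus factor and then letting $\kappa$ sweep $\C^*$), which is a welcome amplification rather than a deviation.
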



\section{Confluent degeneration of the sixth Painlevé equation to the fifth}\label{section:HC-PV}

The sixth Painlevé equation is
\begin{equation*}\label{eq:HC-PVI}
\begin{split}
P_{VI}:\ \ 
q''&=\frac{1}{2}\Big(\frac{1}{q}+\frac{1}{q-1}+\frac{1}{q-t}\Big)(q')^2-
\Big(\frac{1}{t}+\frac{1}{t-1}+\frac{1}{q-t}\Big)q'\\
&\ +\frac{q(q-1)(q-t)}{2\,t^2(t-1)^2}
\Big[(\vartheta_\infty\!-\!1)^2-\vartheta_0^2\frac{t}{q^2}+\vartheta_1^2\frac{(t-1)}{(q-1)^2}+(1\!-\!\vartheta_t^2)\frac{t(t-1)}{(q-t)^2}\Big],
\end{split}
\end{equation*}
where $\vartheta=(\vartheta_0,\vartheta_t,\vartheta_1,\vartheta_\infty)\in\C^4$ are complex constants.
It is  a reduction to the $q$-variable of a time dependent Hamiltonian system \cite{Oka1}
\begin{align}\label{eq:HC-PVIhamiltonian}
\frac{dq}{dt}=\, \frac{\partial H_{VI}(q,p,t)\!}{\partial p},\qquad
\frac{dp}{dt}=-\frac{\partial H_{VI}(q,p,t)}{\partial q},
\end{align}
with a polynomial Hamiltonian function 
\begin{equation*}
\begin{split}
H_{VI}=\tfrac{1}{t(t-1)}\Big[&
q(q-1)(q-t)p^2-\Big(\vartheta_0(q-1)(q-t)+\vartheta_1 q(q-t)+(\vartheta_t-1)q(q-1) \Big)p \\
& + \tfrac{(\vartheta_0+\vartheta_1+\vartheta_t-1)^2-\vartheta_\infty^2}{4}(q-t)\Big].
\end{split}
\end{equation*}
It has three simple (regular) singular points on the Riemann sphere $\CP^1$ at $t=0,1,\infty$.

The fifth Painlevé equation $P_{V}$\,\footnote{The equation \eqref{eq:HC-PV} is the fifth equation of Painlevé with a parameter $\eta_1=-1$. A general form of this equation would be obtained by a further change of variable $\tilde t\mapsto-\eta_1 \tilde t$. The degenerate case $P_V^{deg}$ with $\eta_1=0$ which has only a regular singular point at $\infty$ is not considered here.} 
\begin{equation*}\label{eq:HC-PV}
P_{V}:\ \ 
q''=\Big(\frac{1}{2q}+\frac{1}{q\!-\!1}\Big)(q')^2-
\frac{1}{\tilde t}q'+
\frac{(q\!-\!1)^2}{2\tilde t^2}\Big((\vartheta_\infty\!-1)^2 q-\frac{\vartheta_0^2}{q}\Big)+(1+\tilde\vartheta_1)\frac{q}{\tilde t}-\frac{q(q\!+\!1)}{2(q\!-\!1)},
\end{equation*}
is obtained from $P_{VI}$ as a limit $\epsilon\to 0$ after the change of the independent variable
\begin{equation}\label{eq:HC-t}
t= 1+\epsilon\tilde t,\qquad \vartheta_t= \frac{1}{\epsilon},\quad \vartheta_1= -\frac{1}{\epsilon}+\tilde\vartheta_1+1,
\end{equation}
which sends the three singularities to $\tilde t=-\frac{1}{\epsilon},0,\infty$.
At the limit, the two simple singular points $-\frac{1}{\epsilon}$ and $\infty$ merge into a double (irregular) singularity at the infinity.%

The change of variables \eqref{eq:HC-t}, changes the function $\epsilon\cdot H_{VI}$ to
\begin{equation*}
\begin{split}
H_{VI,\epsilon}=\tfrac{1}{\tilde t(1+\epsilon\tilde t)}\Big[&
q(q-1)(q-1-\epsilon\tilde t)p^2-\Big(\vartheta_0(q-1)(q-1-\epsilon\tilde t)+\tilde\vartheta_1 q(q-1-\epsilon\tilde t)+\tilde tq-\epsilon\tilde tq \Big)p\\ 
& + \tfrac{(\vartheta_0+\tilde\vartheta_1)^2-\vartheta_\infty^2}{4}(q-1-\epsilon\tilde t)\Big],
\end{split}
\end{equation*}
and the Hamiltonian system to
\begin{align*}
\frac{dq}{d\tilde t}=\, \frac{\partial H_{VI,\epsilon}(q,p,\tilde t)}{\partial p},\qquad
\frac{dp}{d\tilde t}=-\frac{\partial H_{VI,\epsilon}(q,p,\tilde t)}{\partial q},
\end{align*}
whose limit $\epsilon\to 0$ is a Hamiltonian system of $P_V$.
In the coordinate $x=\frac{1}{\tilde t}+\epsilon$, the above system is written as 
\begin{equation}\label{eq:HC-PVIhamiltonianx}
\begin{aligned}
x(x-\epsilon)\frac{dq}{dx}=\, \frac{\partial H(q,p,x,\epsilon)\!}{\partial p},\qquad
x(x-\epsilon)\frac{dp}{dx}=-\frac{\partial H(q,p,x,\epsilon)\!}{\partial q},
\end{aligned}
\end{equation}
with
\begin{equation*}
\begin{aligned}
H(q,p,x,\epsilon)&= -(1+\epsilon\tilde t) H_{VI,\epsilon}(q,p,\tilde t)\\
&=-\tfrac{(\vartheta_0+\tilde\vartheta_1)^2-\vartheta_\infty^2}{4}((x-\epsilon)q-x)+x\vartheta_0p+\big(1-\epsilon-(x-\epsilon)\vartheta_0-x(\vartheta_0+\tilde\vartheta_1)\big)qp\\
&\qquad +(2x-\epsilon)(qp)^2 +(x-\epsilon)(\theta_0+\tilde\vartheta_1)q^2p-xqp^2-(x-\epsilon)q^3p^2,
\end{aligned}
\end{equation*}
and Theorem~\ref{theorem:HC-normalization} can be applied.

\begin{theorem}
The formal invariant $\chi$ of the system \eqref{eq:HC-PVIhamiltonianx} is
\begin{equation}
 \chi(h,x,\epsilon)=1-\epsilon-(x-\epsilon)\vartheta_0-x(\vartheta_0+\tilde\vartheta_1)+2(2x-\epsilon)h.
\end{equation}
\end{theorem}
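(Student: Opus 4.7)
The plan is to apply formula \eqref{eq:HC-chi2}, which for $\epsilon\neq 0$ expresses $\chi(h,x,\epsilon)$ linearly in terms of the Birkhoff--Siegel derivatives $\partial_h G_H(h,0,\epsilon)$ and $\partial_h G_H(h,\epsilon,\epsilon)$. These are the derivatives of the Siegel normal forms of the autonomous Hamiltonians obtained by restricting $H(q,p,x,\epsilon)$ to the invariant fibers $\sY_0$ and $\sY_\epsilon$. The strategy is to find the critical point of each restriction, perform the symplectic translation that centers it at the origin, and then recognise the shape demanded by Proposition~\ref{proposition:HC-lemma} so as to read off $G_H$ without ever computing any normalizing transformation.

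Evaluating at $x=\epsilon$ yields
$$H(q,p,\epsilon,\epsilon)=A\epsilon+\epsilon\vartheta_0 p+E\,qp+\epsilon(qp)^2-\epsilon q p^2,$$
with $A=\tfrac14\bigl((\vartheta_0+\tilde\vartheta_1)^2-\vartheta_\infty^2\bigr)$ and $E=1-\epsilon-\epsilon(\vartheta_0+\tilde\vartheta_1)$. Since $\partial_q H$ factors through $p$ and $\partial_p H$ is linear in $q$, the critical point is $(q_0,0)$ with $q_0=-\epsilon\vartheta_0/E$. The symplectic translation $q=\tilde q+q_0$, $p=\tilde p$ kills the linear $\tilde p$ term, and, writing $h=\tilde q\tilde p$, reorganizes the remaining polynomial as
$$H(q,p,\epsilon,\epsilon)=A\epsilon+Eh+\epsilon h^2+\tilde p\cdot\Delta_2(\tilde p,h)$$
for an explicit polynomial $\Delta_2$. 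Proposition~\ref{proposition:HC-lemma} (with $i=2$) then identifies the Siegel normal form as $G_H(h,\epsilon,\epsilon)=Eh+\epsilon h^2$ (additive constants are irrelevant), whence $\partial_h G_H(h,\epsilon,\epsilon)=E+2\epsilon h$.

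Evaluating at $x=0$ yields
$$H(q,p,0,\epsilon)=A\epsilon q+B\,qp-\epsilon(qp)^2-\epsilon(\vartheta_0+\tilde\vartheta_1)q^2p+\epsilon q^3p^2,$$
with $B=1-\epsilon+\epsilon\vartheta_0$. Now the critical point is $(0,p_0)$ with $p_0=-A\epsilon/B$; the symplectic translation $p=\tilde p+p_0$ kills the linear $\tilde q$ term, and grouping the remainder in powers of $\tilde q$ and $h=\tilde q\tilde p$ gives
$$H(q,p,0,\epsilon)=(\text{const})+Bh-\epsilon h^2+\tilde q\cdot\Delta_1(\tilde q,h),$$
so Proposition~\ref{proposition:HC-lemma} (with $i=1$) delivers $\partial_h G_H(h,0,\epsilon)=B-2\epsilon h$. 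Substituting these two expressions into \eqref{eq:HC-chi2},
$$\chi(h,x,\epsilon)=\tfrac{1}{\epsilon}\bigl[x(E+2\epsilon h)-(x-\epsilon)(B-2\epsilon h)\bigr],$$
and expanding yields the announced formula. The main obstacle is really bookkeeping: checking that the two translated Hamiltonians fall \emph{exactly} into the shape required by Proposition~\ref{proposition:HC-lemma}, and confirming that the $i=1$ and $i=2$ applications yield mutually consistent Siegel normalizations (which they must, modulo the involution $h\mapsto-h$ of Corollary~\ref{proposition:HC-chi}, because $\chi$ is globally well defined by the two restrictions).
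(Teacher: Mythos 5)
Your proposal is correct and follows essentially the same route as the paper: restrict $H$ to the two invariant fibers $x=0$ and $x=\epsilon$, translate symplectically to the critical point on each fiber, read off the Birkhoff--Siegel normal forms $G(h,0,\epsilon)=Bh-\epsilon h^2$ and $G(h,\epsilon,\epsilon)=Eh+\epsilon h^2$ via Proposition~\ref{proposition:HC-lemma}, and assemble $\chi$ through \eqref{eq:HC-chi2}. The only cosmetic difference is that the paper packages the two translations into a single $x$-dependent affine shift $(\tilde q,\tilde p)=(q-xA,\,p+(x-\epsilon)B)$ before restricting, which changes nothing in the resulting $G$ and hence in $\chi$.
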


\begin{proof}
Let 
\begin{alignat*}{2}
\tilde q&=q-xA,&\qquad  A&=\tfrac{\vartheta_0}{1+\epsilon(1+\vartheta_0+\tilde\vartheta_1)},\qquad\\
\tilde p&=p+(x-\epsilon)B,&  B&=\tfrac{(\vartheta_0+\tilde\vartheta_1)^2-\vartheta_\infty^2}{4(1+\epsilon(1-\vartheta_0))},
\end{alignat*}
and let 
\begin{align*}
\tilde H(\tilde q,\tilde p,x,\epsilon)=H(q,p,x,\epsilon)-\tfrac{(\vartheta_0+\tilde\vartheta_1)^2+\vartheta_\infty^2}{4t}x.
\end{align*}
Then for $\epsilon\neq 0$,
$$\tilde H(\tilde q,\tilde p,\epsilon,\epsilon)=
\big(1-\epsilon(1+\vartheta_0+\tilde\vartheta_1)\big)\tilde q\tilde p+\epsilon(\tilde q+\epsilon A)^2\tilde p^2-\epsilon(\tilde q+\epsilon A)\tilde p^2,$$
hence by Proposition~\ref{proposition:HC-lemma} the Birkhoff-Siegel invariant of $H(q,p,\epsilon,\epsilon)$ is
$$G(h,\epsilon,\epsilon)=\big(1-\epsilon(1+\vartheta_0+\tilde\vartheta_1)\big)h+\epsilon h^2,$$
and 
$$\tilde H(\tilde q,\tilde p,0,\epsilon)=
\big(1-\epsilon(1-\vartheta_0)\big)\tilde q\tilde p-\epsilon\tilde q^2(\tilde p+\epsilon B)^2
-\epsilon(\vartheta_0+\tilde\vartheta_1)\tilde q^2(\tilde p+\epsilon B) +\epsilon\tilde q^3(\tilde p+\epsilon B)^2,$$
hence by Proposition~\ref{proposition:HC-lemma} the Birkhoff-Siegel invariant of $H(q,p,0,\epsilon)$ is
$$G(h,0,\epsilon)=\big(1-\epsilon(1-\vartheta_0)\big)h-\epsilon h^2,$$
i.e.
$$G(h,x,\epsilon)=\big(1-\epsilon-(x-\epsilon)\vartheta_0-x(\vartheta_0+\tilde\vartheta_1)\big)h+ (2x-\epsilon)h^2.$$
\end{proof}

The Theorem~\ref{theorem:HC-normalization0} for the limit system $\epsilon=0$ is in this case due to Takano \cite{Ta}, see also \cite{Shi,Yo2}.
A separate paper \cite{Kl4} will be devoted to a more detailed study of the confluence $P_{VI}\to P_V$ and of the non-linear Stokes phenomenon in $P_V$ through the Riemann-Hilbert correspondence.

\section{Proof of Theorem~\ref{theorem:HC-normalization} and of Proposition~\ref{proposition:HC-lemma}}\label{sec:HC-proof}

The proof of Theorem~\ref{theorem:HC-normalization} is loosely based on the ideas of Siegel's proof of Theorem~\ref{theorem:HC-siegel} \cite[chap. 16 and 17]{SM}.
We construct  the normalizing transformation $y=\mbf\Phi_\pm(u,x,\epsilon)$ in a couple of steps as a formal power series in the $u$-variable with coefficients depending analytically on $(x,\epsilon)\in\sXE_\pm$,
and then show that the series is convergent. The main tool to prove the convergence is the Lemma~\ref{lemma:HC-majorantlemma} below.

\medskip

Let 
$$\phi_\pm(u,x,\epsilon)=\sum_{|\mbf m|\geq 2}\phi_{\pm,\mbf m}(x,\epsilon)u^{\mbf m},\qquad \mbf m=(m_1,m_2),\ u^{\mbf m}=u_1^{m_1}u_2^{m_2},\ |\mbf m|=m_1+m_2,$$
be a power series in the $u$-variable with coefficients bounded and analytic on $(x,\epsilon)\in\sXE_\pm$.
We will write
$$\{\phi_\pm\}_{\mbf m}:=\phi_{\pm,\mbf m}.$$
Denoting 
$$\|\phi_{\pm,\mbf m}\|:=\sup_{(x,\epsilon)\in\sXE_\pm} |\phi_{\pm,\mbf m}(x,\epsilon)|$$
the supremum norm over $\sXE_\pm$, let
$$\barbm\phi_\pm(u)=\sum_{|\mbf m|\geq 2}\|\phi_{\pm,\mbf m}\|u^{\mbf m},$$ 
be a majorant power series to $\phi$.
We will write 
$$\barbm \phi_\pm(u)\prec\barbm \psi_\pm(u)\quad\text{if}\quad \|\{\phi_\pm\}_{\mbf m}\|\leq \|\{\psi\}_{\mbf m}\|\ \text{ for all } \mbf m.$$

The following lemma is the essential technique in Siegel's proof.

\begin{lemma}\label{lemma:HC-majorantlemma}
Let $\phi=\transp{(\phi_1,\phi_2)}=O(u^2)$ be a formal power series in $u$, 
and let $r=\transp(r_1,r_2)=O(u^2)$ be a convergent power series in $u$.
If 
$$\barbm \phi_j(u)\prec \barbm r_j(u+\barbm\phi(u)),\qquad j=1,2,$$ 
where $\barbm\phi=\transp{(\barbm\phi_1,\barbm\phi_2)}$, then $\phi$ is convergent.
\end{lemma}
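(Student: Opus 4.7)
The plan is to produce an explicit convergent series that dominates $\barbm\phi$ coefficient by coefficient. The natural candidate is the unique formal solution $\psi = \transp(\psi_1,\psi_2)$, with $\psi = O(u^2)$, of the implicit equation
\begin{equation*}
\psi_j(u) = \barbm r_j\bigl(u+\psi(u)\bigr),\qquad j=1,2.
\end{equation*}
Since $\barbm r$ is convergent (with the same radius of convergence as $r$) and $\barbm r = O(u^2)$, the Jacobian in $\psi$ of $F(u,\psi):=\psi-\barbm r(u+\psi)$ is the identity at $(u,\psi)=(0,0)$, and the implicit function theorem yields an analytic solution $\psi$ on some polydisc at the origin. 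A routine induction on $|\mathbf m|$ then shows that the coefficients $\{\psi_j\}_{\mathbf m}$ are all non-negative real numbers, because the defining recursion expresses them as polynomials with non-negative combinatorial coefficients in $\{\barbm r_j\}$ and in $\{\psi_k\}_{\mathbf n}$ with $|\mathbf n|<|\mathbf m|$.

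The core of the argument is the coefficient-wise domination $\{\barbm\phi_j\}_{\mathbf m} \leq \{\psi_j\}_{\mathbf m}$, which I would prove by induction on $|\mathbf m|$. The key input is that, because $\barbm\phi = O(u^2)$, the coefficient of $u^{\mathbf m}$ in $\barbm r_j(u+\barbm\phi)$ is a polynomial with non-negative coefficients in $\{\barbm r_j\}$ and in those $\{\barbm\phi\}_{\mathbf n}$ with $|\mathbf n|<|\mathbf m|$ only. Combining this observation with the hypothesis $\barbm\phi_j \prec \barbm r_j(u+\barbm\phi)$, the inductive hypothesis, and the monotonicity in $\barbm\phi$ of the right-hand side yields
\begin{equation*}
\{\barbm\phi_j\}_{\mathbf m} \leq \{\barbm r_j(u+\barbm\phi)\}_{\mathbf m} \leq \{\barbm r_j(u+\psi)\}_{\mathbf m} = \{\psi_j\}_{\mathbf m},
\end{equation*}
closing the induction. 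Since $\psi$ is analytic on some polydisc at the origin, the relation $\barbm\phi \prec \psi$ implies that $\barbm\phi$ converges on the same polydisc, whence $\phi$ itself converges there.

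The main (and essentially only) delicate step is the degree-counting behind the claim that the $u^{\mathbf m}$-coefficient of $\barbm r_j(u+\barbm\phi)$ depends only on lower-order coefficients of $\barbm\phi$. This rests on the observation that in any monomial contributing to $u^{\mathbf m}$ in the expansion of a term $(u+\barbm\phi(u))^{\mathbf k}$ with $|\mathbf k|\geq 2$, each factor of $\barbm\phi$ has $u$-degree at least $2$ while the remaining $|\mathbf k|-L$ factors (with $L$ the number of $\barbm\phi$-factors used) each contribute at least one further power of $u$, so the degree $d$ of every individual $\barbm\phi$-coefficient appearing satisfies $d \leq |\mathbf m|-|\mathbf k|+1 \leq |\mathbf m|-1$. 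Everything else is a transparent application of the implicit function theorem and coefficient-wise monotonicity.
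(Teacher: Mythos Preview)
Your proof is correct and is precisely the standard argument (construct the analytic majorant $\psi$ solving $\psi=\barbm r(u+\psi)$ via the implicit function theorem, then dominate $\barbm\phi$ by $\psi$ through induction on $|\mathbf m|$). The paper itself does not give a proof but simply refers to \cite{RR11}, \cite{MM}, and \cite{SM}, where exactly this classical Siegel majorant technique is carried out; so your proposal reproduces what those references contain.
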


\begin{proof}
See \cite[Theorem 2.2]{RR11}. It can be also found implicitely in \cite[p.520]{MM} and \cite[chap. 17]{SM}.
\end{proof}

\subsection{Step 1: Ramified straightening of center manifold and diagonalization of the linear part}

\begin{itshape}
Suppose that the system is in a pre-normal form,
\begin{equation}\label{eq:HC-e1}
 x(x-\epsilon)\frac{dy}{dx}=J\transp{D_yF}(y,x,\epsilon),\qquad J\transp{D_yF}(y,x,\epsilon)=\chi(y_1y_2,x,\epsilon) \left(\begin{smallmatrix} 1 & 0 \\ 0 & -1 \end{smallmatrix}\right)y+O(x(x-\epsilon)).
\end{equation}
We will show that there exists a ramified transversely symplectic change of variable
\begin{equation}\label{eq:HC-T}
 y=T_\pm(x,\epsilon)w+\phi_{\pm,\mbf 0}(x,\epsilon),\qquad \det T_\pm(0,\epsilon)=I,\ \phi_{\pm,\mbf 0}(x,\epsilon)=O(x(x-\epsilon))
\end{equation}
bounded and analytic on the domain $\sXE_\pm$ of Definition~\ref{definition:HC-X},
that brings the system to a form
\begin{equation}\label{eq:HC-e4}
 x(x-\epsilon)\frac{dw}{dx}=\chi(w_1w_2,x,\epsilon) \left(\begin{smallmatrix} 1 & 0 \\ 0 & -1 \end{smallmatrix}\right)w+x(x-\epsilon) f_\pm(w,x,\epsilon),
\end{equation}
with $f_\pm(w,x,\epsilon)=O(|w|^2)$, $\frac{\partial f_{1,\pm}}{\partial w_1}+\frac{\partial f_{2,\pm}}{\partial w_2}=0$.
\end{itshape}

\medskip

The solution $w=0$ of the transformed system \eqref{eq:HC-e4}, 
corresponds to a bounded ramified solution $y=\phi_{\pm,\mbf 0}(x,\epsilon)$  of the system \eqref{eq:HC-e1}.
The paper \cite{Kl2}, see Theorem~\ref{proposition:HC-cm} below,
shows that there is a unique such solution on the domain $\sXE_\pm$; this it is the \emph{``ramified center manifold''} of the corresponding foliation.

The variable $\tilde y=y-\phi_{\pm,\mbf 0}(x,\epsilon)$ then satisfies
\begin{equation*}
 x(x-\epsilon)\frac{d\tilde y}{dx}=J\transp{D_yF}(\tilde y+\phi_{\pm,\mbf 0},x,\epsilon)-J\transp{D_yF}(\phi_{\pm,\mbf 0},x,\epsilon),
\end{equation*}
whose linear part is
$A_\pm(x,\epsilon):=J(D^2_{y}F)(\phi_{\pm,\mbf 0},x,\epsilon)=\lambda(x,\epsilon) \left(\begin{smallmatrix} 1 & 0 \\ 0 & -1 \end{smallmatrix}\right)+x(x-\epsilon)R(\phi_{\pm,\mbf 0},x,\epsilon).$
The transformation matrix $T_\pm$ \eqref{eq:HC-T} must then satisfy
\begin{equation*}
 x(x-\epsilon)\frac{d T_\pm}{dx}=A_\pm T_\pm-\lambda T_\pm \left(\begin{smallmatrix} 1 & 0 \\ 0 & -1 \end{smallmatrix}\right).
\end{equation*}
The existence of such a transformation $T_\pm$ bounded on $\sXE_\pm$ is known \cite{LR,HLR} when $A_\pm$ is analytic.
In our case the matrix $A_\pm$ is ramified, but their proof works anyway.
We will obtain $T_\pm$ directly using Theorem~\ref{proposition:HC-cm}.

Writing $R=\big(r_{ij}\big)_{i,j}$ and
$$T_\pm=\left(\begin{smallmatrix} 1 & t_{1,\pm} \\ t_{2,\pm} & 1 \end{smallmatrix}\right)\left(\begin{smallmatrix} e^{b_{1,\pm}} & 0 \\[3pt] 0 & e^{b_{2,\pm}} \end{smallmatrix}\right),$$
then the terms $t_{i,\pm}$, $i=1,2$, are solutions to Riccati equations
\begin{equation}\label{eq:HC-e6}
x(x-\epsilon)\tfrac{d t_{i,\pm}}{dx}=(-1)^{i-1}2\lambda t_{i,\pm}
+x(x-\epsilon)\big[r_{ij}+(r_{ii}-r_{jj})\cdot t_{i,\pm}-r_{ji}\cdot\big(t_{i,\pm}\big)^2\big],
\end{equation}
and the terms $b_{i,\pm}$ are solution to
$\tfrac{d b_{i,\pm}}{dx}=\big(r_{ii}+r_{ij} t_{j,\pm}\big),$ i.e.
$$b_{i,\pm}=\int_0^x r_{ii}+r_{ij}t_{j,\pm} dx.$$

Combining the equations \eqref{eq:HC-e1} for $\phi_{\pm,\mbf 0}$ and \eqref{eq:HC-e6} for $t_\pm$, in which $r_{ij}=r_{i,j}(\phi_{\pm,\mbf 0},x,\epsilon)$, 
we get an analytic system for which the existence of a unique bounded solution on $\sXE_\pm$ is assured by the following theorem.

\begin{theorem}[\hbox{\cite[Theorems 2 and 4]{Kl2}}]\label{proposition:HC-cm}
Consider a system of the form 
\begin{equation}\label{eq:HC-cm}
 x(x-\epsilon)\frac{d\phi}{dx}=M\phi + f(\phi,x,\epsilon), \qquad (\phi,x,\epsilon)\in \C^m\times\C\times\C.
\end{equation}
with $M$ an invertible $m\!\times\! m$-matrix whose eigenvalues are all\,\footnote{This assumption is inessential, it is added here just to simplify the statement. See \cite{Kl2} for a general version of the statement.}
on the line $\lambda^{(0)}\R$, and
$\,f(\phi, x,\epsilon)$ analytic germ such that $D_{\phi}f(0,0,0)=0$, and $f(0,x,\epsilon)=O(x(x-\epsilon))$.

\smallskip
\noindent
\textbf{(i)}
The system \eqref{eq:HC-cm} possesses a unique solution in terms of a formal power series in $(x,\epsilon)$:
\begin{equation}
 \hat \phi(x,\epsilon)=\sum_{k,j=0}^{+\infty}\phi_{kj}x^k\epsilon^j,\qquad \phi_{kj}\in\C^m.
\end{equation}
This series is divisible by $x(x-\epsilon)$, and its coefficients satisfy 
$\|\phi_{kj}\|\leq L^{k+j}(k+j)!$ for some $L>0$.

\smallskip
\noindent
\textbf{(ii)}
The system \eqref{eq:HC-cm} possesses a unique bounded analytic solution $\phi_\pm(x,\epsilon)$
on the domain $\sX_\pm(\epsilon)$, $\epsilon\in\sE_\pm$ of Definition~\ref{definition:HC-X} (for some $\delta_x,\delta_\epsilon>0$).
It is uniformly continuous on 
$$\sXE_\pm=\{(x,\epsilon)\mid x\in \sX_\pm(\epsilon)\}$$
and analytic on the interior of $\sXE_\pm$, and it vanishes (is uniformly $O(x(x-\epsilon))$) at the singular points. 
When $\epsilon$ tends radially to $0$ with $\arg\epsilon=\beta$, then 
$\phi(x,\epsilon)$ converges to $\phi(x,0)$ uniformly on compact sets of the sub-domains 
$\lim_{\substack{\epsilon\to 0\\ \arg\epsilon=\beta}}\sX_{\pm}(\epsilon)\subseteq \sX_\pm(0)$. 

\smallskip
\noindent
\textbf{(iii)}
Let $\Phi(x,\epsilon)$ be analytic extension of the function given by the convergent series
$$\Phi(x,\epsilon)=\sum_{j,k} \frac{\phi_{kj}}{(k+j)!}x^k\epsilon^j.$$ 
For each point $(x,\epsilon)$, for which there is $\theta \in\,]\!-\!\frac{\pi}{2},\frac{\pi}{2}[$ such that
$\,\mbox{$\mathbf{S}_\theta\cdot(x,\epsilon)$}\subseteq \sXE_\pm$, with $\mathbf{S}_\theta\subset\C$ denoting the circle  through the points $0$ and $1$ with  center on $e^{i\theta}\R^+$, we can express $\phi_\pm(x,\epsilon)$ as the 
following Laplace transform of $\Phi$:
\begin{equation}
\phi_\pm(x,\epsilon)=\int_0^{+\infty e^{i\theta}}\!\!\!\Phi(sx,s\epsilon)\,e^{-s}\,ds.
\end{equation}
In particular, $\phi_+(x,0)=\phi_-(x,0)$ is the functional cochain consisting of the pair of Borel sums of the formal series $\hat \phi(x,0)$ in directions on either side of $\lambda^{(0)}\R$.
\end{theorem}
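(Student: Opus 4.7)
The plan is to prove the three parts separately, each by a distinct technique, with part (ii) carrying the bulk of the work. For part (i), substitute the ansatz $\hat\phi(x,\epsilon)=\sum_{k,j\geq 0}\phi_{kj}x^k\epsilon^j$ into \eqref{eq:HC-cm} and match the coefficient of $x^k\epsilon^j$. From $x(x-\epsilon)=x^2-x\epsilon$, the left-hand side contributes $(k-1)\phi_{k-1,j}-k\phi_{k,j-1}$, while the right-hand side yields $M\phi_{kj}+[f(\hat\phi,x,\epsilon)]_{kj}$, where the bracket denotes coefficient extraction. The hypotheses $D_\phi f(0,0,0)=0$ and $f(0,x,\epsilon)=O(x(x-\epsilon))$ guarantee that $[f(\hat\phi,x,\epsilon)]_{kj}$ depends only on $\phi_{k'j'}$ with $k'+j'<k+j$, so invertibility of $M$ determines the recursion uniquely and forces divisibility by $x(x-\epsilon)$. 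For the factorial bound $\|\phi_{kj}\|\leq L^{k+j}(k+j)!$, the driving term $k\phi_{k,j-1}$ is the source of the growth (reflecting the irregular singularity at $\epsilon=0$); comparison with a scalar majorant equation whose solution is dominated by a series of the form $\sum L^{k+j}(k+j)!\,x^k\epsilon^j$ closes the induction.

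For part (ii), the strategy is to reformulate \eqref{eq:HC-cm} as a fixed-point equation on $\sXE_\pm$. Parametrize a real trajectory of the vector field $e^{i\omega_\pm}\tfrac{x(x-\epsilon)}{\lambda(x,\epsilon)}\partial_x$ by a real time $t$; along it the equation becomes $\tfrac{d\phi}{dt}=\tfrac{e^{i\omega_\pm}}{\lambda(x,\epsilon)}\bigl(M\phi+f(\phi,x,\epsilon)\bigr)$. The constraints \eqref{eq:HC-omega} on $\omega_\pm$ ensure that the eigenvalues of the linear operator $\tfrac{e^{i\omega_\pm}}{\lambda}M$ have strictly negative real part along trajectories running toward the attractive point $x_{1,\pm}$ and strictly positive real part toward the repulsive point $x_{2,\pm}$. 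The unique bounded solution of the linear inhomogeneous equation on such a trajectory is then written as a convolution against a decaying Green's kernel along the trajectory, anchored at the attractive equilibrium. A Banach contraction argument on the space of continuous $\phi$ on $\sXE_\pm$, equipped with a weighted sup-norm enforcing the vanishing $\phi=O(x(x-\epsilon))$ at the singular points, produces the nonlinear fixed point $\phi_\pm$; uniform continuity up to the boundary and the radial limit behavior as $\epsilon\to 0$ follow from uniformity of the kernel estimates in $(x,\epsilon)$.

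For part (iii), define $\Phi(x,\epsilon):=\sum_{k,j}\tfrac{\phi_{kj}}{(k+j)!}x^k\epsilon^j$; the factorial bound from (i) gives a positive radius of convergence, and one analytically continues along the directions $\mbf S_\theta\cdot(x,\epsilon)$. Using the scaling identity $(x\partial_x+\epsilon\partial_\epsilon)\Phi(sx,s\epsilon)=s\partial_s\Phi(sx,s\epsilon)$ and integration by parts in the Laplace integral, a direct computation shows that $\psi_\pm(x,\epsilon):=\int_0^{+\infty e^{i\theta}}\Phi(sx,s\epsilon)\,e^{-s}\,ds$ satisfies the same ODE \eqref{eq:HC-cm} and admits $\hat\phi$ as asymptotic expansion in $(x,\epsilon)$ in the appropriate Gevrey-1 sense. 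By the uniqueness in part (ii), $\psi_\pm=\phi_\pm$ wherever both are defined, which simultaneously identifies $\phi_\pm$ as the Borel sum of $\hat\phi$ and yields the Laplace representation.

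The main obstacle will be part (ii). The domains $\sX_\pm(\epsilon)$ have intricate geometry depending on $\epsilon$: they spiral infinitely near each singular point, they deform non-uniformly as $\epsilon$ varies in $\sE_\pm$, and at $\epsilon=0$ they degenerate into a pair of overlapping sectors. All estimates on the integral fixed-point operator must hold \emph{uniformly} across the full family $\epsilon\in\sE_\pm$ including the confluence limit. Producing a single weighted Banach space on which the operator contracts with a contraction constant independent of $\epsilon$, while still controlling the behavior at both equilibria $x_{1,\pm}(\epsilon)$ and $x_{2,\pm}(\epsilon)$ as they coalesce, is the technical crux; once it is in place, all the continuity and limit assertions in (ii) follow by standard uniform-convergence arguments, and the Laplace representation in (iii) is essentially a corollary of (i) and (ii).
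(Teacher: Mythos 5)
This theorem is not actually proved in the paper: it is imported as an external result from \cite{Kl2} (Theorems 2 and 4), where it is established by working in the Borel plane --- the function $\Phi$ is constructed and bounded first, and the sectoral solution $\phi_\pm$ is then \emph{defined} by the Laplace integral --- so part (iii) is the engine of the argument rather than a corollary. Your plan inverts this order, and the inversion is where the genuine gap lies. The factorial bounds of (i) only give convergence of the series $\Phi$ in a neighbourhood of the origin; to make sense of $\int_0^{+\infty e^{i\theta}}\Phi(sx,s\epsilon)\,e^{-s}ds$ you need analytic continuation of $\Phi$ along the entire ray $s\in e^{i\theta}\R^+$ together with uniform exponential bounds in $s$, and neither is supplied by (i), by (ii), or by the uniqueness argument you invoke (uniqueness only identifies the integral with $\phi_\pm$ \emph{after} the integral is known to exist and to solve the equation). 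This continuation-with-bounds is the whole content of a Borel-summability claim, and here it is aggravated by the fact that the summation is a two-variable one with respect to $s\mapsto(sx,s\epsilon)$, which is exactly why the hypothesis $\mathbf{S}_\theta\cdot(x,\epsilon)\subseteq\sXE_\pm$ involving the circle $\mathbf{S}_\theta$ appears; your sketch never engages with that condition. Closing (iii) would require either running the fixed-point argument in the Borel plane for the convolution equation satisfied by $\Phi$ (the route of \cite{Kl2}) or a Ramis--Sibuya-type argument comparing sectoral solutions; neither is sketched.

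Two further points concern (ii). The eigenvalues of $M$ lie on the line $\lambda^{(0)}\R$ but may occur with both signs (in the application they are $\pm\lambda$, $\pm2\lambda$), and an eigenvalue does not change sign according to the direction in which a trajectory is traversed: for each spectral component the associated exponential decays toward one endpoint and grows toward the other. Hence there is no decaying Green kernel ``anchored at the attractive equilibrium''; you must split $\C^m$ into the spectral subspaces of $M$ and integrate each component from its own endpoint, $x_{1,\pm}$ for one half of the spectrum and $x_{2,\pm}$ for the other, exactly as the paper does one step later in \eqref{eq:HC-e9}. As written, your integral operator would not produce a bounded fixed point. Finally, the uniform-in-$\epsilon$ contraction on the degenerating spiraling domains, including the confluence limit $\epsilon\to 0$, is the part you yourself identify as the technical crux, but it is asserted rather than carried out; so even with the corrected kernel, (ii) remains a programme rather than a proof, and (iii) as proposed would fail.
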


Since the trace of the linear part of both systems \eqref{eq:HC-e1} and \eqref{eq:HC-e4} is null, then by the Liouville--Ostrogradskii formula $\det T(x,\epsilon)$ is constant in $x$ and equal to $\det T(0,\epsilon)=1$. Therefore the transformation \eqref{eq:HC-T} is transversely symplectic, and by Lemma~\ref{lemma:HC-transhamilton} the transformed system
\eqref{eq:HC-e4} is transversely Hamiltonian.

\subsection{Step 2: Normalization}

\begin{itshape}
Suppose that the system is in the form \eqref{eq:HC-e4}.
We will show that there exists a ramified change of variable $w=\Phi_\pm(v,x,\epsilon)$, $\Phi_\pm(\cdot,0,\epsilon)=\id$,
that will bring it to an integrable form
\begin{equation}\label{eq:HC-e44}
 x(x-\epsilon)\frac{dv}{dx}=\alpha_\pm(h,x,\epsilon) \left(\begin{smallmatrix} 1 & 0 \\[3pt] 0 & -1 \end{smallmatrix}\right)v,\qquad h=v_1v_2,
\end{equation}
for some germ $\alpha_\pm(h,x,\epsilon)$, with $\alpha_\pm(0,x,\epsilon)=\lambda(x,\epsilon)$.
\end{itshape}

\medskip
The transformation $\Phi_\pm$ must satisfy
\begin{equation}\label{eq:HC-eqPhi}
x(x-\epsilon)\partial_x\Phi_\pm+\alpha_\pm\cdot\!\left(v_1\partial_{v_1}\!-v_2\partial_{v_2}\right)\Phi_\pm=
\chi\circ\Phi_\pm\cdot \left(\begin{smallmatrix} 1 & 0 \\[3pt] 0 & -1 \end{smallmatrix}\right)\Phi_\pm+x(x-\epsilon) f_\pm\circ\Phi_\pm.
\end{equation}
We are looking for $\Phi_\pm$ written as
\begin{equation}\label{eq:HC-ePhi}
\Phi_\pm(v,x,\epsilon)=v+\Psi_{\pm,\Delta}+x(x-\epsilon)\Psi_{\pm,\star}(v,x,\epsilon),\qquad \Psi_{\pm,\Delta}+\Psi_{\pm,\star}=:\Psi_{\pm}
\end{equation}
where the power expansion of the $j$-th coordinate of $\Psi_{\pm,\Delta}$ is equal to
$$\Psi_{j,\pm,\Delta}=\sum_{n\geq 1}\{\Psi_{j,\pm}\}_{(n,n)+\mbf e_j}h^nv_j,\qquad \text{$\mbf e_j$ being the $j$-the elementary vector},$$
while the power expansion of the $j$-th coordinate of $\{\Psi_{\pm,\star}\}$ does not contains any power $h^nv_j$, $n\geq 0$.
In particular,
$\left(v_1\partial_{v_1}\!-v_2\partial_{v_2}\right)\Psi_{\pm,\Delta}=\left(\begin{smallmatrix} 1 & 0 \\[3pt] 0 & -1 \end{smallmatrix}\right)\Psi_{\pm,\Delta}$.
Therefore \eqref{eq:HC-eqPhi} becomes
\begin{equation}
\begin{aligned}
\partial_x \Psi_{\pm,\Delta}&+\partial_x\big(x(x-\epsilon)\Psi_{\pm,\star}\big)+
\lambda \left(v_1\partial_{v_1}\!-v_2\partial_{v_2}\!-\left(\begin{smallmatrix} 1 & 0 \\[3pt] 0 & -1 \end{smallmatrix}\right)\right)\Psi_{\pm,\star} \\
&=-\alpha_\pm^* \left(v_1\partial_{v_1}\!-v_2\partial_{v_2}\right)\Psi_{\pm,\star}+
\frac{\chi^*\circ\Phi_\pm-\alpha_\pm^*}{x(x-\epsilon)}\left(\begin{smallmatrix} 1 & 0 \\[3pt] 0 & -1 \end{smallmatrix}\right)(v+\Psi_{\pm,\Delta})
 + G_\pm,
\end{aligned}
\end{equation}
where
$G_\pm=\left(\chi^*\circ\Phi_\pm\right)\left(\begin{smallmatrix} 1 & 0 \\[3pt] 0 & -1 \end{smallmatrix}\right)\Psi_{\pm,\star}+f\circ\Phi_\pm,$
and 
$\chi^*=\chi-\lambda$, $\alpha_\pm^*=\alpha_\pm-\lambda$.

Set
\begin{equation*}
 \alpha_\pm(h,x,\epsilon)=\sum_{n\geq 0}\{\chi\circ\Phi_\pm\}_{(n,n)}h^n,
\end{equation*}
and denote
\begin{equation*}
 K_\pm:=\frac{\chi^*\circ\Phi_\pm-\chi^*\circ(v+\Psi_{\pm,\Delta})}{x(x-\epsilon)},
\end{equation*}
which is an analytic function of $v+\Psi_{\pm,\Delta}$ and $\Psi_{\pm,\star}$ with coefficients depending on $x,\epsilon$.
Then $\{\chi^*\circ\Phi_\pm-\alpha_\pm^*\}_{(n,n)}=0$ for all $n\geq 0$, and 
$\{\chi^*\circ\Phi_\pm-\alpha_\pm^*\}_{\mbf n}=x(x-\epsilon)\{K_\pm\}_{\mbf n}$ for all multi-indices $\mbf n$ with $n_1\neq n_2$,
since $\{\alpha_\pm^*\}_{\mbf n}=0=\{\chi^*\circ(v+\Psi_{\pm,\Delta})\}_{\mbf n}$.

Expanding the $j$-th coordinate, $j=1,2$, of the equation \eqref{eq:HC-eqPhi} in powers of $v$ we get:
\begin{itemize}
\item for $\mbf m=(n,n)+\mbf e_j$:
\begin{equation}\label{eq:HC-ed}
\partial_x\{\Psi_{j,\pm}\}_{(n,n)+\mbf e_j}=\{G_{j,\pm}\}_{(n,n)+\mbf e_j},
\end{equation}
\item for a multi-index $\mbf m$ with $m_1\!-m_2+(-1)^j\neq 0$:
\begin{equation}\label{eq:HC-em}
\begin{aligned}
&\partial_x \{x(x-\epsilon)\Psi_{j,\pm}\}_{\mbf m} + (m_1\!-m_2+(-1)^j) \lambda \{\Psi_{j,\pm}\}_{\mbf m}=\\
&-(m_1\!-m_2)\{\alpha_\pm^*\cdot\Psi_{j,\pm}\}_{\mbf m} - (-1)^j\{K_\pm\cdot(v_j+\Psi_{j,\pm,\Delta})\}_{\mbf m}  + \{G_{j,\pm}\}_{\mbf m}.
\end{aligned}
\end{equation}

\end{itemize}
The right sides of \eqref{eq:HC-em} and \eqref{eq:HC-ed} are functions of 
$\{\Psi_{\pm}\}_{\mbf k}=\transp(\{\Psi_{1,\pm}\}_{\mbf k},\{\Psi_{2,\pm}\}_{\mbf k})$, 
with $k_1\leq m_1,\ k_2\leq m_2$, $|\mbf k|<|\mbf m|$ only, which means that the equations for $\{\Psi_{j,\pm}\}_{\mbf m}$ can be solved recursively.

The equation \eqref{eq:HC-ed} is solved by
\begin{equation}\label{eq:HC-e8}
\{\Psi_{j,\pm}\}_{(n,n)+\mbf e_j}(x,\epsilon)=\int_0^x \{G_{j,\pm}\}_{(n,n)+\mbf e_j}dx.
\end{equation}
The equation \eqref{eq:HC-em} has a unique bounded solution $\{\Psi_{j,\pm}\}_{\mbf m}$ given by the integral
\begin{equation}\label{eq:HC-e9}
 \{\Psi_{j,\pm}\}_{\mbf m}(x,\epsilon)=\frac{e^{-(m_1-m_2+(-1)^j)t_\lambda}}{x(x-\epsilon)} \int_{x_{i,\pm}}^x e^{(m_1-m_2+(-1)^j)t_\lambda} 
\{F_{j,\pm}\}_{\mbf m}dx,
\end{equation}
where $\{F_{j,\pm}\}_{\mbf m}$ is the right side of \eqref{eq:HC-em},
\begin{equation*}
t_\lambda(x,\epsilon)=
\left\{\!\!\begin{array}{ll}   
-\tfrac{\lambda^{(0)}}{\epsilon}\log x+(\tfrac{\lambda^{(0)}}{\epsilon}+\lambda^{(1)})\log(x-\epsilon), & \text{for $\epsilon\neq 0$},\\[6pt]
-\frac{\lambda^{(0)}}{x}+\lambda^{(1)}\log x,& \text{for $\epsilon=0$,}
\end{array}\right.
\end{equation*}
is a branch of the rectifying coordinate for the vector field
\begin{equation*}
 \frac{x(x-\epsilon)}{\lambda(x,\epsilon)}\partial_x=\partial_{t_\lambda},
\end{equation*}
on $\sX_{\pm}(\epsilon)$,
and the integration follows a real trajectory of the vector field \eqref{eq:HC-xvectfield} in $\sX_{\pm}(\epsilon)$ from a point 
$$x_{*}= \left\{\!\!\begin{array}{ll}   
x_{1,\pm}, & \text{if $m_1-m_2+(-1)^j>0$},\\[6pt]
x_{2,\pm}, & \text{if $m_1-m_2+(-1)^j<0$,}
\end{array}\right.\qquad \text{$x_{i,\pm}$ is as in\eqref{eq:HC-x12}},$$
to $x$, along which the integral is well defined. 


Note that the convergence of the constructed formal transformation $\Phi_\pm$ is equivalent to the convergence of $\Psi_\pm$ \eqref{eq:HC-ePhi}.
We prove the convergence of the latter series using Lemma~\ref{lemma:HC-majorantlemma}.
For this we need to estimate the norms of \eqref{eq:HC-e8} and \eqref{eq:HC-e9}.

\begin{lemma}\label{lemma:HC-lemmastep3}
Let $\{\Psi_{j,\pm}\}_{\mbf m}$ be given by \eqref{eq:HC-e9}.
Then 
\begin{equation*}
 \|\{\Psi_{j,\pm}\}_{\mbf m}\| \leq \tfrac{c_\star}{m_1-m_2+(-1)^j}\|\{F_{j,\pm}\}_{\mbf m}\|,
\end{equation*}
for some $c_\star>0$ independent of $\mbf m$.
\end{lemma}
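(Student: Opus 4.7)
The estimate asserts $\|\{\Psi_{j,\pm}\}_{\mbf m}\|\leq \tfrac{c_\star}{|\nu|}\|\{F_{j,\pm}\}_{\mbf m}\|$ with $\nu:=m_1-m_2+(-1)^j$ and $c_\star$ independent of $\mbf m$. The plan is to extract the factor $1/|\nu|$ by passing to the rectifying coordinate of \eqref{eq:HC-xvectfield} and integrating by parts once, so that the prefactor $1/(x(x-\epsilon))$ in \eqref{eq:HC-e9} is cancelled by an automatically-appearing factor of $S(x):=x(x-\epsilon)/\lambda(x,\epsilon)$.

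Without loss of generality assume $\nu>0$ (the case $\nu<0$ is symmetric, with $x_*=x_{2,\pm}$). Substituting $\xi:=t_\lambda(s)-t_\lambda(x)$, so that $ds=S(s)\,d\xi$, the integral in \eqref{eq:HC-e9} becomes
\begin{equation*}
\{\Psi_{j,\pm}\}_{\mbf m}(x,\epsilon) = \frac{1}{x(x-\epsilon)}\int_{\xi_*}^0 e^{\nu\xi}\,g(\xi)\,d\xi,\qquad g(\xi):=\{F_{j,\pm}\}_{\mbf m}(s(\xi))\cdot S(s(\xi)),
\end{equation*}
where $\xi$ sweeps a ray out of $0$ in a direction controlled by $\omega_\pm$. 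Since $s'(\xi)=S(s)$, an induction gives $g^{(k)}(\xi)=S(s(\xi))\,H_k(s(\xi))$ for every $k\geq 0$, with $H_0=\{F_{j,\pm}\}_{\mbf m}$ and the recursion $H_{k+1}=S'\,H_k+S\,H_k'$. In particular $g^{(k)}(\xi_*)=0$ (because $S(x_*)=0$), so one integration by parts yields
\begin{equation*}
\{\Psi_{j,\pm}\}_{\mbf m}(x,\epsilon) = \frac{\{F_{j,\pm}\}_{\mbf m}(x)}{\nu\,\lambda(x,\epsilon)} \;-\; \frac{1}{\nu\,x(x-\epsilon)}\int_{\xi_*}^0 e^{\nu\xi}\,g'(\xi)\,d\xi,
\end{equation*}
where the leading term is manifestly bounded by $\tfrac{c_1}{|\nu|}\|\{F_{j,\pm}\}_{\mbf m}\|$ on $\sXE_\pm$ since $|\lambda|$ is bounded below there by the choice of the domain.

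The principal obstacle is controlling the remainder, whose prefactor $1/(x(x-\epsilon))$ blows up at the two singular points. I will exploit the divisibility $g'(0)=S(x)\,(FS)'(x)$ for a second integration by parts, which produces another boundary term $\tfrac{g'(0)}{\nu^2\,x(x-\epsilon)}=\tfrac{(FS)'(x)}{\nu^2\,\lambda(x,\epsilon)}$ and reduces the remainder by another factor of $1/\nu$; iterating gives the asymptotic series
$\{\Psi_{j,\pm}\}_{\mbf m}\sim\sum_{k\geq 0}\tfrac{(-1)^k H_k(x,\epsilon)}{\nu^{k+1}\lambda(x,\epsilon)}$, each term of which is uniformly bounded on $\sXE_\pm$ (with factorially-growing coefficients that are absorbed by optimal truncation at $N\sim|\nu|$, producing an exponentially small error). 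As an independent verification, evaluating \eqref{eq:HC-em} at $x=0$ and $x=\epsilon$ directly gives $\Psi(0,\epsilon)=\{F_{j,\pm}\}_{\mbf m}(0,\epsilon)/(\nu\lambda^{(0)}-\epsilon)$ and $\Psi(\epsilon,\epsilon)=\{F_{j,\pm}\}_{\mbf m}(\epsilon,\epsilon)/(\epsilon+\nu\lambda(\epsilon,\epsilon))$, both of which are $O(1/|\nu|)$; a maximum-principle type argument on the simply connected spiraling domain $\sX_\pm(\epsilon)$, analogous to the proof of Theorem~\ref{proposition:HC-cm} from \cite{Kl2}, then propagates the bound from the boundary to the interior. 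Uniformity in $\mbf m$ follows because the phase $\omega_\pm$ in \eqref{eq:HC-omega} can be chosen so that $\mathrm{Re}(\nu\,e^{i\omega_\pm})$ is bounded away from $0$ by a constant depending only on $\eta$, while all the geometric ingredients ($\lambda$, $S$, their Cauchy radii along the trajectory) depend only on the fixed data of the system on $\sXE_\pm$.
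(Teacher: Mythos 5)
Your opening move is sound: passing to the rectifying coordinate and integrating by parts once does produce the bounded boundary term $\{F_{j,\pm}\}_{\mbf m}(x)/(\nu\lambda(x,\epsilon))$. But the proof does not close after that, and the two devices you invoke to finish it are both gaps. First, the remainder $\tfrac{1}{\nu\,x(x-\epsilon)}\int e^{\nu\xi}g'(\xi)\,d\xi$ still carries the prefactor $1/(x(x-\epsilon))$, and $g'$ involves $\partial_x\{F_{j,\pm}\}_{\mbf m}$. The lemma must bound $\|\{\Psi_{j,\pm}\}_{\mbf m}\|$ by the sup norm $\|\{F_{j,\pm}\}_{\mbf m}\|$ alone, with $c_\star$ independent of $\mbf m$, because that is all the recursive majorant scheme (Lemma~\ref{lemma:HC-majorantlemma}) propagates; derivatives of $\{F_{j,\pm}\}_{\mbf m}$ are not controlled by its sup norm uniformly on $\sXE_\pm$, since Cauchy estimates degenerate along the integration path near the singular point $x_*$ and near the boundary of the spiraling domain. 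Consequently the claimed factorial bounds on the iterated coefficients $H_k$, and hence the ``optimal truncation at $N\sim|\nu|$ with exponentially small error'', are unsubstantiated assertions rather than steps of a proof — and the resulting series is only asymptotic, so convergence cannot rescue it either. Second, the fallback is not valid: evaluating \eqref{eq:HC-em} at $x=0$ and $x=\epsilon$ only gives the values of $\{\Psi_{j,\pm}\}_{\mbf m}$ at two interior (singular) points, which are not the boundary of $\sX_\pm(\epsilon)$, and there is no maximum principle for these quantities that would propagate an $O(1/|\nu|)$ bound from two points to the whole domain; Theorem~\ref{proposition:HC-cm} is an existence/summability statement, not a maximum principle.

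The paper's proof avoids derivatives of $\{F_{j,\pm}\}_{\mbf m}$ altogether by a different change of variable: instead of $t_\lambda$ alone, it uses $\tau_k=k\,t_\lambda+\log(x(x-\epsilon))$ with $k=m_1-m_2+(-1)^j$, so that the troublesome prefactor is absorbed into the exponential ($e^{-kt_\lambda}/(x(x-\epsilon))=e^{-\tau_k}$) and, since $\tfrac{d\tau_k}{dx}=\tfrac{k\lambda+2x-\epsilon}{x(x-\epsilon)}$, the integral becomes $\int e^{\tau_k}\tfrac{\{F_{j,\pm}\}_{\mbf m}}{k\lambda+2x-\epsilon}\,d\tau_k$ along a path that can be deformed to a ray on which $\mathrm{Re}\,\tau_k$ decreases at rate at least $\cos\omega_\pm>\sin\eta$. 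This yields directly $\|\{\Psi_{j,\pm}\}_{\mbf m}\|\le\tfrac{1}{\cos\omega_\pm}\big\|\tfrac{\{F_{j,\pm}\}_{\mbf m}}{k\lambda+2x-\epsilon}\big\|\le\tfrac{c_\star}{|k|}\|\{F_{j,\pm}\}_{\mbf m}\|$, with uniformity in $\mbf m$ immediate because $|k\lambda+2x-\epsilon|\gtrsim|k|$ for $x,\epsilon$ small. If you want to keep your approach, you would at minimum have to replace the iterated integration by parts by this single absorption of $\log(x(x-\epsilon))$ into the phase, or otherwise prove uniform derivative estimates for the $\{F_{j,\pm}\}_{\mbf m}$ — which the sup-norm framework of the proof does not supply.
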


\begin{proof}
Let $\tau_k(x,\epsilon)=kt_\lambda(x,\epsilon)+\log(x(x-\epsilon))$.  
If $k=m_1-m_2+(-1)^j\neq 0$, then for $\epsilon$ small enough the integrating path can be deformed so that 
it corresponds to a ray $\tau_k(\xi,\epsilon)\in \tau_k(x,\epsilon)-e^{i\omega_\pm}[0,+\infty[$, with $\omega_\pm$ as in \eqref{eq:HC-omega}.
We have $\frac{d\tau_k}{dx}(x,\epsilon)=\frac{k\lambda(x,\epsilon)+2x-\epsilon}{x(x-\epsilon)}$. 
Hence 
$$\|\{\Psi_{j,\pm}\}_{\mbf m}\|\leq \frac{1}{\cos\omega_\pm} \left\| \frac{\{F_{j,\pm}\}_{\mbf m}(x,\epsilon)}{k\lambda(x,\epsilon)+2x-\epsilon}\right\|,$$
with $\cos\omega_\pm>\sin\eta>0$, $\eta$ as in \eqref{eq:HC-omega}.
\end{proof}

Therefore
\begin{itemize}
\item for $\mbf m=(n,n)+\mbf e_j$:
\begin{equation*}
\|\{\Psi_{j,\pm}\}_{(n,n)+\mbf e_j}\|\leq c\|\{G_{j,\pm}\}_{(n,n)+\mbf e_j}\|,
\end{equation*}
\item for a multi-index $\mbf m$ with $m_1\!-m_2+(-1)^j\neq 0$:
\begin{equation*}
\|\{\Psi_{j,\pm}\}_{\mbf m}\|\leq c \left(\|\{\alpha_\pm^*\Psi_{j,\pm}\}_{\mbf m}\| +\|\{K_\pm\cdot(v_j+\Psi_{j,\pm,\Delta})\}_{\mbf m}\| + \|\{G_{j,\pm}\}_{\mbf m}\|\right) ,
\end{equation*}
for some $c>0$.
\end{itemize}
Therefore
\begin{align*}
\barbm\Psi_{j,\pm} 
&\prec c\,\barbm\alpha_\pm^*\barbm\Psi_{j,\pm} +c\,\barbm K_\pm\cdot(v_j+\barbm\Psi_{j,\pm}) +c\,\barbm G_{j,\pm}\\
&\prec 2c\,(\barbm\chi^*\circ\barbm\Phi_\pm)\cdot \barbm\Psi_{j,\pm} +c\,\barbm k_\pm\cdot(v_j+\barbm\Psi_{j,\pm})  +c\,\barbm f_{j,\pm}\circ\barbm\Phi_\pm
=:\barbm R_{j,\pm}(v+\barbm\Psi_{\pm,\Delta},\barbm\Psi_{\pm,\star})\\
&\prec\barbm R_{j,\pm}(v+\barbm\Psi_{\pm},v+\barbm\Psi_{\pm}),
\end{align*}
where $K_\pm=K_\pm(v+\Psi_{\pm,\Delta},\Psi_{\pm,\star})$, $\barbm k_\pm=\barbm K_\pm(v+\barbm\Psi_{\pm,\Delta},\barbm\Psi_{\pm,\star})$,
and $\barbm R_{j,\pm}(w_1,w_2)=O(|w|^2)$, and we can conclude with Lemma~\ref{lemma:HC-majorantlemma}.

\subsection{Step 3: Final reduction and transverse symplecticity of the transformation}

Suppose that the system is in the form \eqref{eq:HC-e44}, and write 
$$\alpha_\pm(h,x,\epsilon)=\tilde\chi_\pm(h,x,\epsilon)+x(x-\epsilon)\beta_\pm(h,x,\epsilon),\quad
\tilde\chi_\pm(h,x,\epsilon)=\tilde\chi_\pm^{(0)}(h,\epsilon)+x\tilde\chi_\pm^{(1)}(h,\epsilon).$$
Then the transformation
$v=e^{\int_0^x\beta_\pm dx \left(\begin{smallmatrix} 1 & 0 \\ 0 & -1 \end{smallmatrix}\right)}u$
will bring it to the normal form with formal invariant $\tilde\chi_\pm$.

Let us show that the transformation 
$y=\mbf\Psi_\pm(u,x,\epsilon)$ obtained as a composition of the transformations of Steps 1-3
is transversely symplectic and therefore $\tilde\chi_\pm=\chi$.  

Let $u_\pm(x,\epsilon;c)$ be a germ of a general solution of the normal form system with the formal invariant equal to $\tilde\chi_\pm$,
depending on an initial condition parameter $c=(c_1,c_2)\in(\C^2,0)$, $\det(D_cu_\pm)\neq0$,
and let $y_\pm(x,\epsilon;c)=\mbf\Psi_\pm(u_\pm(x,\epsilon;c),x,\epsilon)$ be the corresponding solution germ of the system \eqref{eq:HC-system}.
Then $D_cy_\pm=D_u\mbf\Psi_\pm\cdot D_c u_\pm$ satisfies the linearized system
$$x(x-\epsilon)\frac{dD_cy_\pm}{dx}=JD_y^2H\cdot D_cy_\pm,$$
and by the Liouville-Ostrogradskii formula
$$x(x-\epsilon)\frac{d}{dx}\det(D_cy_\pm)=\tr(JD_y^2H)\cdot \det(D_cy_\pm),$$
but $\tr(JD_y^2H)=0$, i.e. $\det(D_cy_\pm)=\det(D_u\mbf\Psi_\pm)\cdot\det(D_c u_\pm)$ is constant in $x$. 
Similarly, $\det(D_cu_\pm)$ is constant in $x$. 
Therefore $\det(D_u\mbf\Psi_\pm(u,x,\epsilon))$ is also constant in $x$, and equal to $\det(D_u\mbf\Psi_\pm(u,0,\epsilon))=1$ since $\mbf\Psi_\pm(u,0,\epsilon)=u$.

\medskip
This terminates the proof of Theorem~\ref{theorem:HC-normalization}.

\subsection{Proof of Proposition~\ref{proposition:HC-lemma}}\label{section:HC-prooflemma}

We will construct a formal symplectic change of coordinate $\Phi=\Phi(h,u_i)$, written as a formal power series in $h$ and $u_i$, such that
$G=H\circ\Phi.$
The transformation $\Phi$ is constructed recursively as a formal limit
$$\Phi=\lim_{k\to+\infty}\Phi_{k,1},\quad
\Phi_{k+1,1}=\lim_{l\to+\infty}\Phi_{k,l},\quad
\Phi_{0,1}=\id,\quad
H\circ\Phi_{k,l}=G+O(h^ku_i^{l}).$$
At each step $(k,l)$, $k\geq 0$, $l\geq 1$, we want to get rid of the power $h^ku_i^l$ in $H\circ\Phi_{k,l}$.
We construct 
$\Phi_{k,l+1}=\Phi_{k,l}\circ\Phi^1_{f_{k,l}X_{h^ku_i^l}}$
as a composition of $\Phi_{k,l}$ with the time-1 flow  of a Hamiltonian vector field 
$f_{k,l}X_{h^ku_i^l}=-(-1)^if_{k,l}h^{k-1}u_i^l[ku_i\partial_{u_i}-(k+l)u_j\partial_{u_j}]$ for some $f_{k,l}\in\C$. 
The flow sends both $h$ and $u_i$ to functions of $(h,u_i)$,
$$\Phi^1_{f_{k,l}X_{h^ku_i^l}}:\quad u_i\mapsto u_i+O(h^{k-1}u_i^{l+1}),\quad h\mapsto h+(-1)^ilf_{k,l}h^ku_i^l+O(h^{2k-1}u_i^{2l}),$$
where the terms $O(h^{2k-1}u_i^{2l})$ are null if $k=0$. 
If $H\circ\Phi_{k,l}=G+H_{k,l}h^ku_i^l+O(h^ku_i^{l+1})$ for some $H_{k,l}\in\C$,
then we want 
\begin{align*}
(G+H_{k,l}h^ku_i^l)\circ\Phi^1_{f_{k,l}X_{h^ku_i^l}}-G(h)&=O(h^ku_i^{l+1}),\\
H_{k,l}h^ku_i^l+\lambda\cdot (-1)^ilf_{k,l}h^ku_i^l &=O(h^ku_i^{l+1}),\\
f_{k,l}&=-(-1)^i\tfrac{H_{k,l}}{\lambda l}.
\end{align*}

Now that we have constructed the formal symplectic transformation $\Phi$,
we can conclude by the following Proposition.

\begin{proposition}
Let  $H,\tilde H:(\C^2,0)\to(\C,0)$ be two germs with a non-degenerate critical point at $0$, and let $\omega,\tilde\omega$ be germs of symplectic forms.
Then the two pairs $(H,\omega)$, $(\tilde H,\tilde\omega)$ are analytically equivalent if and only if they are formally equivalent.
\end{proposition}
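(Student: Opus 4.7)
The direction analytic $\Rightarrow$ formal is immediate by Taylor expansion. For the converse, the plan is to reduce both pairs to their Birkhoff-Siegel normal forms by Siegel's theorem (Theorem~\ref{theorem:HC-siegel}), and then to show that the resulting normal forms must coincide (up to the standard involution). More precisely, let $\Phi$ and $\tilde\Phi$ be the analytic symplectomorphisms produced by Siegel's theorem, so that
$$H\circ\Phi = G_H(u_1u_2),\quad \tilde H\circ\tilde\Phi = G_{\tilde H}(v_1v_2),$$
with both pulled-back forms equal to the standard $du_1\wedge du_2$. A formal symplectic equivalence $\hat\Psi:(H,\omega)\to(\tilde H,\tilde\omega)$ then yields a formal symplectic equivalence $\hat\Sigma := \tilde\Phi^{\circ(-1)}\circ\hat\Psi\circ\Phi$ between the two normal forms. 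Once I show $G_H=G_{\tilde H}$ (possibly after composing with the involution $J(u_1,u_2)=(u_2,-u_1)$, which gives the $h\mapsto -h$ ambiguity), the composition $\tilde\Phi\circ\Phi^{\circ(-1)}$ (or $\tilde\Phi\circ J\circ\Phi^{\circ(-1)}$) is a convergent, hence analytic, symplectic equivalence between the original pairs.

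To establish the central claim, I would extract a scalar formal series $\phi$ from $\hat\Sigma$ as follows. The relation $G_H(\hat\Sigma_1\hat\Sigma_2)=G_{\tilde H}(v_1v_2)$ combined with the invertibility of $G_H$ (since $G_H'(0)=\lambda\neq 0$) forces $\hat\Sigma_1\hat\Sigma_2=\phi(v_1v_2)$, where $\phi := G_H^{\circ(-1)}\circ G_{\tilde H}$. Inspecting the symplectic condition $d\hat\Sigma_1\wedge d\hat\Sigma_2=dv_1\wedge dv_2$ at the linear level, one finds that $\phi'(0)\in\{+1,-1\}$, with the two cases corresponding respectively to a diagonal or anti-diagonal linear part of $\hat\Sigma$; composing with $J$ if necessary, we may assume $\phi'(0)=1$. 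For the higher-order rigidity, the cleanest route is via the geometric interpretation of the invariant (Section~\ref{section:HC-periodmap}): the period map $p(h)=\frac{1}{2\pi i}\oint_{\gamma_h}\omega/dH$ equals $1/G'(h)$ in Birkhoff-Siegel coordinates and depends only on the formal jet of $(H,\omega)$, so its Taylor expansion is invariant under formal symplectic equivalence. This yields $G_H'\circ\phi = G_{\tilde H}'$, which combined with $G_{\tilde H}=G_H\circ\phi$ (hence $G_{\tilde H}' = (G_H'\circ\phi)\cdot\phi'$) gives $\phi'\equiv 1$ and therefore $\phi=\mathrm{id}$, $G_H=G_{\tilde H}$.

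The main obstacle is precisely the formal analogue of Siegel's uniqueness, i.e.\ the rigidity $\phi=\mathrm{id}$ once $\phi'(0)=1$. An alternative to the period-map argument, which avoids invoking analytic integrals over vanishing cycles, is a direct induction on the order of $\hat\Sigma$: running the construction of the proof of Proposition~\ref{proposition:HC-lemma} in reverse, one solves for $\hat\Sigma$ order by order under the joint constraints $\hat\Sigma_1\hat\Sigma_2=\phi(v_1v_2)$ and $d\hat\Sigma_1\wedge d\hat\Sigma_2=dv_1\wedge dv_2$, and at each step the symplectic condition pins down the next Taylor coefficient of $\phi$ to match that of the identity. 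Either route delivers the required $G_H=G_{\tilde H}$ (modulo the involution), and completes the proof.
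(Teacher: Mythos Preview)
Your approach is essentially the same as the paper's: reduce via Siegel's theorem to Birkhoff--Siegel normal forms and then show that $G_H$ is a formal invariant via (a formalization of) the period map of Section~\ref{section:HC-periodmap}. The paper's proof is a three-sentence sketch of exactly this; you have fleshed it out, including the computation $G_H'\circ\phi=G_{\tilde H}'$ together with $G_{\tilde H}'=(G_H'\circ\phi)\phi'$ forcing $\phi'\equiv 1$, which is correct. One minor imprecision: in normal-form coordinates the period is $p(h_0)=1/G_H'\bigl(G_H^{-1}(h_0)\bigr)$ (with $h_0$ the \emph{value} of $H$), not $1/G_H'(h)$; your subsequent manipulation is consistent with the correct formula, so this does not affect the argument.
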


\begin{proof}
 By Theorem~\ref{theorem:HC-siegel}, the Birkhoff-Siegel normal form is, up to the involution \eqref{eq:HC-reflection}, a complete analytic invariant for each pair. Therefore it is enough to show that it is also a formal invariant. This can be seen from the invariance of a formalization of the formula \eqref{eq:HC-periodmap} of Section~\ref{section:HC-periodmap}.
\end{proof}

\bigskip
\goodbreak

\footnotesize

\end{document}